\documentclass[11pt]{amsart}
\usepackage{amsmath,amsfonts,amssymb,enumerate,mathrsfs,mathtools,amscd}
\usepackage{pgfplots}

\pgfplotsset{width=6cm,compat=1.18}

\newcommand{\N}{\mathbb{N}}                   % natural numbers
\newcommand{\R}{\mathbb{R}}                   % real numbers
\newcommand{\Reg}{\mathrm{Reg}}               % regular locus
\newcommand{\Sing}{\mathrm{Sing}}             % singular locus
\newcommand{\Zar}{\mathrm{Zar}}               % Zariski
\newcommand{\Na}{\mathcal{N}}                 % Nash
\newcommand{\Aa}{\!\!\!\:{~}^a\:\!\!\!\mathcal{A}}% arc-analytic
\newcommand{\G}{\Gamma}
\newcommand{\Sg}{\Sigma}
\newcommand{\wt}{\widetilde}
\newcommand{\wh}{\widehat}
\newcommand{\ve}{\varepsilon}
\newcommand{\mi}{\mathfrak m}                 % maximal ideal m
\newcommand{\pp}{\mathfrak p}                 % prime ideal p
\newcommand{\rad}{\mathrm{rad}}               % radical ideal
\newcommand{\ZZ}{\mathcal Z}                  % zero set germ
\newcommand{\II}{\mathcal I}                  % ideal sheaf I
\newcommand{\vp}{\varphi}                     % mapping phi
\newcommand{\AR}{\mathcal{AR}}                % AR-
\newcommand{\Int}{\!\:I\!\!\:nt}              % interior
\newcommand{\Cl}{Cl}                          % closure
\theoremstyle{plain}
\newtheorem{theorem}{Theorem}[section]
\newtheorem{proposition}[theorem]{Proposition}
\newtheorem{lemma}[theorem]{Lemma}
\newtheorem{corollary}[theorem]{Corollary}
\newtheorem{conjecture}[theorem]{Conjecture}

\theoremstyle{definition}
\newtheorem{definition}[theorem]{Definition}
\newtheorem{example}[theorem]{Example}
\newtheorem{remark}[theorem]{Remark}
\newtheorem{question}[theorem]{Question}

\numberwithin{equation}{section}

\begin{document}

\title[Arc-analytic functions on locally closed sets]{Arc-analytic functions on locally closed\\ semialgebraic sets}

\author{Janusz Adamus}
\address{Department of Mathematics, The University of Western Ontario, London, Ontario, N6A 5B7 Canada}
\email{jadamus@uwo.ca}
\author{Matthew R. Aharonian}
\address{Department of Mathematics, The University of Western Ontario, London, Ontario, N6A 5B7 Canada}
\email{maharoni@uwo.ca}
\author{Sean Coverdale}
\address{Department of Mathematics, The University of Western Ontario, London, Ontario, N6A 5B7 Canada}
\email{scoverda@uwo.ca}
\author{Jaden Visscher}
\address{Department of Mathematics, The University of Western Ontario, London, Ontario, N6A 5B7 Canada}
\email{jvissch3@uwo.ca}
\thanks{Research was partially supported by the Natural Sciences and Engineering Research Council of Canada (Adamus, Aharonian, Visscher) and the University of Western Ontario USRI scholarship (Aharonian).}

\subjclass[2020]{14P10, 14P20, 14P15, 14P99, 14E15, 13C15, 13G05}
\keywords{semialgebraic geometry, arc-symmetric set, arc-analytic function, Nash manifold, Nash function}

\begin{abstract}
We give a partial generalization of the Kurdyka theory of arc-analytic functions and arc-symmetric sets on Nash manifolds to the setting of arbitrary locally closed semialgebraic sets.
\end{abstract}
\maketitle

%%%%%%%%%%%%%%%%%%%%%%%%%%%%%%%%%%%%%%%%%%%%%%%%%%
%%%%%%%%%%%%%%%%% Section %%%%%%%%%%%%%%%%%%%%%%%%
%%%%%%%%%%%%%%%%%%%%%%%%%%%%%%%%%%%%%%%%%%%%%%%%%%

\section{Introduction}
\label{sec:intro}

This article is concerned with the study of semialgebraic arc-analytic functions and the sets described as common zero-loci of collections of such functions. The notion of an arc-analytic function --- that is, a function analytic along all real analytic arcs into its domain --- was introduced by K.\,Kurdyka in his seminal 1988 paper~\cite{Kur} and soon became ubiquitous in both real algebraic and analytic geometry.
Although in general arc-analytic functions can be quite wild (e.g., arc-analyticity on its own does not even guarantee continuity, see~\cite{BMP}), in the semialgebraic setting, by contrast, they give rise to a very elegant and rich theory, which in many respects resembles the classical algebraic geometry over an algebraically closed field. 

Let $M$ be a Nash manifold, and let $\Aa(M)$ denote the ring of semialgebraic arc-analytic functions on $M$ (see Section~\ref{sec:prelim} for definitions of \emph{Nash}, \emph{semialgebraic}, and other terms used in this introduction). Let us mention here just a few of the remarkable properties of these functions, which we recall in detail in Section~\ref{sec:prelim}:
\begin{itemize}
\item[(i)] The sets defined by the common vanishing of elements of ideals in $\Aa(M)$ form the family of closed sets of a Noetherian topology, the so-called $\AR$ topology on $M$. 
\item[(ii)] The closed sets of this topology are precisely the semialgebraic arc-symmetric subsets of $M$, that is, sets that are closed under analytic arc continuation.
\item[(iii)] The $\AR$ topology on a Euclidean space $M=\R^n$ is strictly finer than the Zariski topology, and it gives rise to a decomposition of real algebraic sets into irreducible components that is in a (quite precise) sense compatible with their desingularization.
\item[(iv)] The ideals of the ring $\,\Aa(M)$ and the $\AR$-closed subsets of $M$ satisfy the arc-analytic variant of Nullstellensatz.
\item[(v)] The arc-analytic functions on $\AR$-closed subsets of $M$ admit arc-analytic extensions to the whole $M$.
\item[(vi)] The Krull dimension of the ring $\Aa(E)$ of arc-analytic functions on an $\AR$-closed set $E$ in $M$ is equal to the Euclidean dimension of $E$.
\end{itemize}

In the present paper, we give a partial generalization of the Kurdyka theory of arc-analytic functions and arc-symmetric sets from Nash manifolds to arbitrary locally closed semialgebraic sets. The local closedness condition, in fact, is the absolute minimum that needs to be assumed of a base set of our theory, for the simple reason that without it there is no {\L}ojasiewicz inequality (or, equivalently, no semialgebraic weak Nullstellensatz; cf. Proposition~\ref{prop:weak-nullstellensatz}).

Let $S\subset\R^n$ be a locally closed semialgebraic set. The main results of the paper are:
\begin{itemize}
\item[(i)] Theorem~\ref{thm:general-AR-top}, asserting that the semialgebraic arc-symmetric subsets of $S$ form the family of closed sets of a Noetherian topology, the so-called $S$-$\AR$ topology on $S$,
\item[(ii)] Theorem~\ref{thm:irred-criterion}, which shows that $S$-$\AR$-irreducibility is compatible with desingularization,
\item[(iii)] Theorem~\ref{thm:basic-closed-has-AEP}, asserting that $S$ has the arc-analytic extension property provided it is of the form
\[
S=\{x\in M: h_1(x)\geq0,\dots,h_s(x)\geq0\}
\]
for some Nash manifold $M$ and arc-analytic semialgebraic functions $h_1,\dots,h_s$ on $M$,
\item[(iv)] Proposition~\ref{prop:AEP-implies-ZSP}, asserting that every semialgebraic set with the arc-analytic extension property enjoys the zero-set property, and
\item[(v)] Theorems~\ref{thm:general-Nullstellensatz} and~\ref{thm:dim-equals-Krull-dim}, showing that the arc-analytic Nullstellensatz and equality between the Krull dimension of the ring $\Aa(S)$ and the Euclidean dimension of $S$ hold for $S$ with the zero-set property.
\end{itemize}

What makes this theory interesting is the fact that, unless $S\subset\R^n$ is an arc-symmetric subset of $\R^n$, the intrinsic $S$-$\AR$ topology on $S$ is strictly finer than the induced $\AR$-subspace topology from $\R^n$ (see Remark~\ref{rem:AR-not-induced} and Examples~\ref{ex:1reducible} and~\ref{ex:2reducible}).
Likewise, the algebra $\Aa(S)$ of arc-analytic functions on a non-arc-symmetric $S$ is not at all determined by $\Aa(\R^n)$ (see Theorem~\ref{thm:Aa-integral-equiv}).
And there are many natural examples of non-arc-symmetric locally closed semialgebraic sets, whose algebra of arc-analytic functions is of interest (for instance, the Nash sheets of arc-symmetric sets, as defined by Kurdyka~\cite[\S\,3]{Kur}, or images of compact sets by local blow-ups).
\smallskip

The structure of the paper is as follows: To keep the paper reasonably self-contained for non-specialist readers, in the next section we provide basic definitions and give a review of the classical theory of arc-analytic functions and arc-symmetric sets on Nash manifolds. Section~\ref{sec:S-AR-topology} contains a discussion of arc-analytic functions and arc-symmetric sets on arbitrary semialgebraic sets. In Section~\ref{sec:arc-irreducibility}, we introduce a notion of irreducibility of a semialgebraic set $S$ in terms of the algebra $\Aa(S)$ of its arc-analytic functions, which we call \emph{arc-analytic irreducibility}, and study its basic properties. Section~\ref{sec:irred-comps} is devoted to the study of irreducible components of a semialgebraic set $S$. In Section~\ref{sec:arcan-null}, we prove our general arc-analytic Nullstellensatz and its algebro-geometric consequences. In the final section, we prove Theorem~\ref{thm:basic-closed-has-AEP} and its consequences. Chief among them, Proposition~\ref{prop:AEP-implies-ZSP} which asserts that the arc-analytic extension property implies the zero-set property.
\medskip

We are happy to acknowledge the influence of the unpublished preprint~\cite{S} by H. Seyedinejad on our work.
In fact, to the best of our knowledge, Seyedinejad's note was the first attempt at describing the ring of arc-analytic functions on a general semialgebraic set and its relation to the geometry of the set, and the original goal of our project was to understand and correct several subtle errors in his exposition. This has lead to our distillation of the zero-set property and the arc-analytic extension property (Definition~\ref{def:zero-set-arcan-ext}) as the central objects of the study.

\vspace{2ex}
%%%%%%%%%%%%%%%%%%%%%%%%%%%%%%%%%%%%%%%%%%%%%%%%%%
%%%%%%%%%%%%%%%%% Section %%%%%%%%%%%%%%%%%%%%%%%%
%%%%%%%%%%%%%%%%%%%%%%%%%%%%%%%%%%%%%%%%%%%%%%%%%%

\section{Preliminaries}
\label{sec:prelim}

Recall that a set $S$ in $\R^n$ is called \emph{semialgebraic}, if it can be written as a finite union of sets of the form
\[
\{x\in\R^n\,:\,f(x)=0,\,g_1(x)>0,\dots,g_s(x)>0\}\,,
\]
for some $s\in\N$, where $f,g_1,\dots,g_s\in\R[x]$ and $x=(x_1,\dots,x_n)$. Given $S\subset\R^n$, a \emph{semialgebraic function} $f:S\to\R$ is one whose graph $\G_f$ is a semialgebraic subset of $\R^{n+1}$. It follows from the celebrated Tarski-Seidenberg theorem that the domain and level sets of a semialgebraic function are semialgebraic sets. A real analytic submanifold $M$ in $\R^n$ (not necessarily closed) is called a \emph{Nash manifold}, if $M$ is a semialgebraic subset of $\R^n$.
A function $f:U\to\R$ on an open semialgebraic set $U\subset\R^n$ is a \emph{Nash function}, if $f$ is real analytic and semialgebraic. We write $\Na(U)$ for the ring of all Nash functions on $U$. For an excellent detailed exposition of semialgebraic geometry, we refer the reader to \cite{BCR}. Here, we shall only mention two results that are used frequently throughout the paper.

\begin{proposition}[\textbf{Curve Selection Lemma}]
\label{prop:Nash-curve-sel}
Let $M$ be a Nash manifold, let $A\subset M$ be a semialgebraic set, and let $a\in\Cl_M(A)$. There exists a Nash mapping $\vp:(-1,1)\to M$ such that $\vp((0,1))\subset A$ and $\vp(0)=a$.
\end{proposition}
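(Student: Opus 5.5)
We reduce to the classical semialgebraic Curve Selection Lemma (as in \cite[Thm.\,2.5.5]{BCR}): for a semialgebraic set $B\subset\R^m$ and $b\in\Cl(B)$, there is a continuous semialgebraic path $\gamma:[0,1)\to\R^m$ with $\gamma(0)=b$ and $\gamma((0,1))\subset B$, which may moreover be taken Nash on $(0,1)$. The plan is to upgrade such a path to a Nash arc defined on all of $(-1,1)$ with values in $M$. We do this in two steps: first work in a local Nash chart of $M$ near $a$, then reparametrize the arc so that it becomes analytic at $t=0$.

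\emph{Step 1 (localization).} Since $M$ is a Nash submanifold of $\R^n$ of some dimension $d$, there are an open semialgebraic neighbourhood $W$ of $a$ in $M$ and a Nash diffeomorphism $\psi:W\to\R^d$ with $\psi(a)=0$ (Nash charts, \cite[\S\,9.3]{BCR}). Replacing $A$ by $\psi(A\cap W)$, it suffices to treat the case $M=\R^d$, $a=0$. We then compose the resulting arc with $\psi^{-1}$; this preserves the Nash property and puts the image in $W\subset M$.

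\emph{Step 2 (Puiseux reparametrization).} This is the main point. We would use the standard proof of curve selection via a semialgebraic triangulation, or via cylindrical decomposition. It yields that each coordinate $\gamma_i$ of the path satisfies a nontrivial polynomial relation $P_i(t,\gamma_i(t))=0$. Because $\gamma_i$ is continuous at $0$ and semialgebraic, it admits a convergent Puiseux expansion $\gamma_i(t)=\sum_{k\ge0}c_{ik}t^{k/q}$ for small $t>0$, with a common denominator $q\ge1$ for all $i$. The same denominator works for all coordinates because there are finitely many of them. We set $\vp(s)=\gamma(s^q)$ for small $s>0$. Each coordinate then becomes a convergent power series $\sum_k c_{ik}s^k$, which defines an analytic map on a neighbourhood $(-\ve,\ve)$ of $0$ with $\vp(0)=0$ and $\vp((0,\ve))\subset A$.

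This $\vp$ is real analytic, and it is semialgebraic: its graph for $s>0$ is semialgebraic, and on $(-\ve,0]$ the analytic continuation of an algebraic function is still algebraic, since it satisfies the same polynomial relation $P_i(s^q,\vp_i(s))=0$. Being semialgebraic and analytic, $\vp$ is Nash on $(-\ve,\ve)$. Finally we precompose with the linear rescaling $s\mapsto \ve s$, which is Nash and maps $(-1,1)$ onto $(-\ve,\ve)$ and $(0,1)$ onto $(0,\ve)$. The result is a Nash map on $(-1,1)$ with the required properties.

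The expected obstacle is Step 2. The real work is justifying the Puiseux expansion of a continuous semialgebraic function germ at $0^+$: one has to choose a branch of the algebraic curve $P_i=0$ passing through $(0,0)$ and ensure that the convergent series matches $\gamma_i$. Once we accept Puiseux's theorem for real algebraic branches, as in \cite[\S\,8.1]{BCR}, the remaining steps are routine.
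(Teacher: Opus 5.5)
Your proposal is correct, and it is essentially the standard textbook argument (semialgebraic curve selection, then a convergent Puiseux expansion and the reparametrization $t\mapsto t^q$, with Nash-ness coming from analyticity plus a nontrivial polynomial relation). The paper does not prove this statement at all; it only recalls it from the literature (cf.~\cite{BCR}). Your Step~1 is a worthwhile addition, because the usual version only gives a Nash arc in the ambient $\R^n$. Working in a Nash chart is what guarantees $\vp((-1,1))\subset M$. Equivalently, you could use that near $a$ the manifold $M$ is the zero set of Nash functions, so by the identity principle the ambient arc stays in $M$ on a small interval, and then rescale.
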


A \emph{semialgebraic} (or \emph{Nash}) \emph{stratification} of a semialgebraic set $S\subset\R^n$ is a finite partition $\{S_\alpha\}_{\alpha\in A}$ of $S$, where each $S_\alpha$ is a connected Nash submanifold of $\R^n$ such that, if $S_\alpha\cap(\Cl(S_\beta)\setminus S_\beta)\neq\varnothing$ then $S_\alpha\subset\Cl(S_\beta)\setminus S_\beta$ and $\dim{S_\alpha}<\dim{S_\beta}$. (Here and throughout, $\Cl(S)$ denotes the Euclidean closure of a set $S\subset\R^n$.)

\begin{proposition}[\textbf{Nash Stratification}]
\label{subordinate-strat}
Let $S_1,\dots,S_k$ be semialgebraic sets in $\R^n$. There exists a Nash stratification of $\R^n$ such that each $S_j$ is a union of some strata of this stratification.
\end{proposition}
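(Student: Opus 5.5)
The statement is the standard Nash stratification theorem. I would derive it from the semialgebraic cell decomposition and Nash regularity results in \cite{BCR}, and then enforce the frontier condition by successive refinement.

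\emph{Step 1: an initial partition.} Take a cylindrical algebraic decomposition of $\R^n$ adapted to $S_1,\dots,S_k$. This is a finite partition of $\R^n$ into semialgebraic cells, each Nash diffeomorphic to an open cube, such that every $S_j$ is a union of cells. Each cell is a connected Nash submanifold, since in a cylindrical decomposition the cells are graphs of, or bands between, Nash functions over lower-dimensional cells.

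\emph{Step 2: the frontier condition.} This is the main obstacle. The cells of a cylindrical decomposition need not satisfy the condition that a stratum meeting $\Cl(S_\beta)\setminus S_\beta$ is entirely contained in it. I would argue by descending induction on dimension. Suppose we have a finite family $\mathcal F$ of semialgebraic sets (the current cells together with the $S_j$). For each cell $C$, the frontier $\Cl(C)\setminus C$ is semialgebraic and has dimension $<\dim C$, by the standard dimension theory in \cite{BCR}. Now apply Step 1 again to the finite family consisting of all members of $\mathcal F$ together with all frontiers $\Cl(C)\setminus C$. Then iterate, each time refining only the cells of dimension $<d$ while keeping the $d$-dimensional ones fixed.

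More precisely, proceed by induction on $d$ from $n$ down to $0$. The inductive hypothesis is that all cells of dimension $\ge d$ are fixed, and every frontier of a fixed cell is a union of current cells. To pass to the next stage, refine the union of the cells of dimension $<d$ by a decomposition adapted to the current cells of dimension $<d$, the sets $S_j$, and the frontiers of the cells of dimension $d$. Refining a lower-dimensional set does not destroy the property that frontiers of higher-dimensional cells are unions of cells: such a frontier was a union of cells, and after refinement it is still a union of the finer cells.

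\emph{Step 3: checking the definition.} Once every frontier $\Cl(S_\beta)\setminus S_\beta$ is a union of strata, any stratum $S_\alpha$ meeting it is contained in it. Moreover $\dim S_\alpha\le\dim(\Cl(S_\beta)\setminus S_\beta)<\dim S_\beta$.

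\emph{Remaining technical point.} One must ensure that refining a semialgebraic subset of lower dimension yields connected Nash submanifolds of $\R^n$. For this I would invoke the fact (see \cite{BCR}) that every semialgebraic set admits a finite partition into connected Nash submanifolds compatible with finitely many given semialgebraic subsets. This follows by applying cylindrical decomposition in $\R^n$ to the relevant subsets and taking the cells contained in them.

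Finiteness holds throughout, since each stage produces finitely many cells and there are $n+1$ stages.
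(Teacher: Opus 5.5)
The paper gives no proof of this proposition. It is quoted in the preliminaries as a standard fact from Bochnak--Coste--Roy, so your sketch has to stand on its own. Steps~1 and~3 are fine, but Step~2 has a genuine gap, exactly at the point you call the main obstacle. To carry the inductive hypothesis past stage $d$, the frontier $\Cl(C)\setminus C$ of every $d$-cell $C$ you freeze must become a union of cells after refining only the cells of dimension $<d$. That is possible only if $\Cl(C)$ meets no other cell of dimension $\ge d$, i.e.\ $\Cl(C)\setminus C\subset L_{<d}$. Nothing in your construction ensures this. The $d$-cells come from the previous stage's refinement, which is only required to be compatible with finitely many sets, and compatibility does not control how the new cells abut one another. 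The same problem also arises in reverse: a frozen cell may lie in the closure of a lower-dimensional cell created later.

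Here is a concrete failure using the black box you invoke (a partition into connected Nash submanifolds compatible with given sets). Take $n=3$, $S_1=\{y=0\}$, and the c.a.d.\ with cells $\{y>0\}$, $\{y<0\}$, $\{y=0\}$.
\begin{enumerate}
\item[(1)] At stage~$3$ you may refine the plane into $W_1=\{y=0,\,x<0\}$, $V=\{x=y=0\}$, $G=\{(x,0,0):0<x<1\}$, the point $\{(1,0,0)\}$, and the connected slit half-plane $W_2=\{y=0,\,x>0\}\setminus\Cl(G)$.
\item[(2)] Then $\Cl(W_1)\setminus W_1=V$ and $\Cl(W_2)\setminus W_2=V\cup G\cup\{(1,0,0)\}$ are already unions of cells, so stage~$2$ legitimately changes nothing.
\item[(3)] At stage~$1$, however, $\Cl(G)\setminus G$ contains the origin, which lies in the frozen $1$-cell $V$ of the same dimension. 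No refinement of $0$-cells can repair this.
\end{enumerate}
If you instead want to rely on the cylindrical shape of c.a.d.\ cells, you must prove that it excludes such configurations. That is not automatic, since c.a.d.'s violate the frontier condition in general, and it is essentially the content of the theorem. The reference proof in Bochnak--Coste--Roy needs extra input for exactly this (stratifying families of polynomials and Thom's lemma).

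The standard repair is to \emph{shrink} the top-dimensional pieces rather than freeze cells, and to keep the leftover set closed.
\begin{enumerate}
\item[(a)] Start with $B=\R^n$ and $d=\dim B$. Let $U$ be the set of $x\in B$ having a neighbourhood $N$ such that $B\cap N$ is a $d$-dimensional Nash submanifold and, for each $j$, either $B\cap N\subset S_j$ or $B\cap N\cap S_j=\varnothing$.
\item[(b)] $U$ is semialgebraic and open in $B$, and $\dim(B\setminus U)<d$. To see the dimension bound, take a cell decomposition compatible with $B$ and the $S_j$. Every point of a $d$-cell $C\subset B$ outside the closures of the other cells in $B$ lies in $U$. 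Moreover, $C\cap\Cl(C')\subset\Cl(C')\setminus C'$, which has dimension $<d$.
\item[(c)] Take the finitely many connected components $D$ of $U$ as the $d$-strata. Each $S_j\cap U$ is open and closed in $U$, hence a union of them. Since $B$ is closed and $D$ is closed in $U$, we get $\Cl(D)\setminus D\subset B\setminus U$.
\item[(d)] Recurse on the \emph{closed} set $B\setminus U$ with the family $\{S_j\cap(B\setminus U)\}$ together with all the sets $\Cl(D)\setminus D$.
\end{enumerate}
Because $B\setminus U$ is closed, frontiers of strata built later stay inside it and never touch the $d$-strata. Hence every frontier is a union of strata, and your Step~3 then finishes the proof.
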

\smallskip

\subsection*{Semialgebraic arc-analytic functions and arc-symmetric sets}

We recall below basic facts and most important results of the theory of semialgebraic arc-analytic functions and arc-symmetric sets, that we shall use and refer to in the paper. Although originally the theory was constructed for subsets of a Euclidean space $\R^n$, it can easily be transferred to Nash manifolds thanks to the fact that every Nash manifold is Nash isomorphic to a nonsingular algebraic set in $\R^N$, for some $N$, by \cite[Thm.\,VI.2.1, Rem.\,VI.2.11]{Shiota} (cf.~\cite[\S\,1]{KK}). Since one of the main techniques used in this paper is the blow-up of a Nash manifold, the Nash manifold setting will be more useful for our purposes, as it will allow us to remain within our initial category after blowing-up.

\begin{definition}[{\cite[Def.\,1.1]{Kur}}]
\label{def:arc-symm}
Let $M$ be a Nash manifold. A semialgebraic set $E\subset M$ is called \emph{arc-symmetric} if it satisfies one of the following equivalent conditions:
\begin{itemize}
\item[(i)] For every analytic arc $\gamma:(-1,1)\to M$, if $\Int(\gamma^{-1}(E))\neq\varnothing$ then $\gamma^{-1}(E)=(-1,1)$
\item[(ii)] For every analytic arc $\gamma:(-1,1)\to M$, if $\gamma((-1,0))\subset E$ then $\gamma((-1,1))\subset E$
\item[(iii)] For every injective analytic arc $\gamma:(-1,1)\to M$, if $\gamma((-1,0))\subset E$ then $\gamma((-1,1))\subset E$.
\end{itemize}
The equivalence of conditions (i) and (ii) with (iii) follows from \cite[Lem.\,0.1]{Kur}. Here and throughout, $\Int(S)$ denotes the (Euclidean) interior of a set $S$.
\end{definition}

For the following theorem, recall that a topological space $(X,\tau)$ is called \emph{Noetherian}, when for every descending chain $F_1\supset F_2\supset F_3\supset\dots$ of closed sets in $X$ there exists $n$ such that $F_n=F_{n+k}$ for all $k\in\N$.

\begin{theorem}[{\cite[Thm.\,1.4]{Kur}}]
\label{thm:AR-topology}
Let $M$ be a connected Nash manifold.
There exists a Noetherian topology on $M$, whose closed sets are precisely the arc-symmetric semialgebraic subsets of $M$.
\end{theorem}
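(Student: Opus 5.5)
The plan has two parts: check that arc-symmetric semialgebraic sets satisfy the axioms for closed sets of a topology, and then prove the descending chain condition by induction on dimension.

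\textbf{Topology axioms.} Work with condition (ii) of Definition~\ref{def:arc-symm}.

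The empty set and $M$ are trivially arc-symmetric.

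For a finite union $E_1\cup E_2$, take an analytic arc $\gamma$ with $\gamma((-1,0))\subset E_1\cup E_2$. The sets $\gamma^{-1}(E_i)\cap(-1,0)$ are semialgebraic in the arc parameter, hence finite unions of intervals and points; we cannot use that $\gamma$ is semialgebraic, but their union is all of $(-1,0)$. So one of them, say $\gamma^{-1}(E_1)$, contains an open interval. By condition (i), $\gamma^{-1}(E_1)=(-1,1)$, which gives $\gamma((-1,1))\subset E_1\cup E_2$.

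Strictly, $\gamma^{-1}(E_i)$ need not be semialgebraic when $\gamma$ is merely analytic. Instead I will use that it is subanalytic, or directly that $E_i$ is closed (arc-symmetric sets are closed: for $a\in\Cl(E)$, curve selection via Proposition~\ref{prop:Nash-curve-sel} gives a Nash arc into $E$ ending at $a$, and (ii) forces $a\in E$). Then $\gamma^{-1}(E_1)\cap(-1,0)$ and $\gamma^{-1}(E_2)\cap(-1,0)$ are two relatively closed sets covering $(-1,0)$. By Baire, one of them has nonempty interior.

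For an arbitrary intersection, the arc-continuation property passes to intersections immediately. The issue is semialgebraicity, and this is handled by the Noetherian part. It suffices to show that every family of arc-symmetric semialgebraic sets has an intersection equal to the intersection of a finite subfamily. That follows from the descending chain condition: consider finite subintersections and pick a minimal one, which exists by DCC. Minimality forces it to be contained in every member of the family, so it equals the full intersection.

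\textbf{Descending chain condition.} Let $E_1\supset E_2\supset\cdots$ be arc-symmetric semialgebraic sets, and induct on $\dim E_1$.

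The key step, which I expect to be the main obstacle, is the following claim. If $E'\subset E$ are arc-symmetric with $E'\ne E$, then either $\dim E'<\dim E$, or the number of "arc-irreducible pieces" of top dimension drops.

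To make this precise, take a Nash stratification (Proposition~\ref{subordinate-strat}) of $E$, and consider the top-dimensional regular part of $E$, $d=\dim E$. Say two $d$-dimensional strata are related if there is an analytic arc through their common boundary joining their interiors generically; call the resulting finitely many classes $C_1,\dots,C_r$. Let $\overline{C_i}$ denote the closure of the union of the strata in the class $C_i$.

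If an arc-symmetric $E'$ contains an open subset of a $d$-stratum, then analytic continuation gives the following. Inside a connected Nash manifold stratum, the set where $E'$ has interior is open and closed, using condition (i) along arcs; it then propagates across the related strata. Hence $E'\supset \overline{C_i}$.

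Therefore, along the chain, the set of classes $i$ with $\overline{C_i}\subset E_k$ is non-increasing and can drop only finitely often. Once it stabilizes, write $F$ for the union of the remaining $\overline{C_i}$, and $G_k$ for the closure of $E_k\setminus F$. Then $G_k$ has dimension $<d$, and the chain $G_k$ (made arc-symmetric, e.g.\ by taking $E_k\cap(\text{lower-dimensional part})$ together with a suitable arc-symmetric hull) stabilizes by induction.

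The delicate point is making the $G_k$ arc-symmetric. For this I would follow Kurdyka's device of replacing $E_k$ by the arc-symmetric closure of $E_k\setminus F$. I would bound its dimension by $<d$ using that any arc-analytic continuation staying in $E_1$ and entering a $d$-stratum with interior must lie in a class already contained in $E_k$. Finiteness of the number of classes, and of strata, then finishes the induction.
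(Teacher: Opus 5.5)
\paragraph{Where the argument breaks.} The gap is at the step you yourself call delicate, and the remedy you propose does not work as stated.

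Taking the ``arc-symmetric closure'' of $E_k\setminus F$ presupposes that the intersection of all arc-symmetric semialgebraic sets containing a given set is again semialgebraic. That is closure under arbitrary intersections, which in your plan is derived \emph{from} the descending chain condition you are trying to prove. So the argument is circular.

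The dimension bound you sketch is not an argument either. The hull is not obtained by a single round of arc continuation, and nothing you say prevents it from picking up $d$-dimensional pieces of $F$.

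You also cannot fall back on $G_k=\Cl(E_k\setminus F)$. A decreasing chain of closed semialgebraic sets of dimension $<d$ need not stabilize unless those sets are arc-symmetric.

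Separately, the ``key step'' as literally stated is false: take $E$ to be a plane together with a line off it, and $E'$ the plane. Your argument, however, only uses the stabilization of the set of top-dimensional pieces contained in $E_k$, so this does not matter.

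\paragraph{The repair.} The missing idea is cheap.
\begin{enumerate}
\item[(1)] Fix $k_0$ after which that set of top-dimensional pieces stabilizes.
\item[(2)] Put $Z=\Cl^\Zar(E_{k_0}\setminus F)$. It is arc-symmetric, since if $P\circ\gamma$ vanishes on an interval it vanishes identically. It has dimension $<d$, by the fact quoted just before Proposition~\ref{prop:alg-closure-dim}.
\item[(3)] Let $G_k=E_k\cap Z$. These sets are arc-symmetric semialgebraic, decreasing, and of dimension $<d$.
\item[(4)] For $k\geq k_0$ we have $E_k=F\cup G_k$, so the induction closes.
\end{enumerate}
Your equivalence classes $C_i$ are also unnecessary. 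Your open-and-closed argument inside a stratum is precisely Lemma~\ref{lem:dim-drop}, and tracking which $d$-dimensional strata lie in $E_k$ already suffices.

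\paragraph{Comparison with the paper's route.} With this repair your route is correct, but it is organized differently from the one the paper indicates, following Kurdyka. There, everything rests on Lemma~\ref{lem:fin-intersection}, the finite-intersection property for traces on a connected smooth semialgebraic $\G$. Kurdyka proves it by induction on $\dim\G$: pick $E_{i_1}\not\supset\G$, get $\dim(\G\cap E_{i_1})<\dim\G$ from Lemma~\ref{lem:dim-drop}, stratify $\G\cap E_{i_1}$, and recurse.

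That induction runs over arbitrary connected smooth semialgebraic $\G$, which need not be arc-symmetric. So no hull or Zariski closure is ever required. Semialgebraicity of arbitrary intersections and Noetherianity then both follow immediately, as in the proof of Theorem~\ref{thm:general-AR-top}.

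You go the other way: you prove the chain condition first and then recover the finite-intersection property from a minimal finite subintersection. That is logically fine. Your treatment of finite unions via Euclidean closedness and Baire coincides with the paper's.
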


The proof of Theorem~\ref{thm:AR-topology} boils down to establishing the following two lemmas.

\begin{lemma}[{\cite[1.6]{Kur}}]
\label{lem:dim-drop}
Let $\G$ be a connected smooth semialgebraic subset of $M$, and let $E\subset M$ be arc-symmetric. Then
\[
\G\not\subset E\ \Longrightarrow\ \dim(\G\cap E)<\dim\G\,.
\]
\end{lemma}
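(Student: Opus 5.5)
We argue by contraposition: assume $\dim(\G\cap E)=\dim\G=:d$ and deduce $\G\subset E$. The idea is to use arc-symmetry to propagate $E$ from an open piece of $\G$ to all of $\G$, with connectedness of $\G$ making the propagation reach everything. Since $\G$ is a smooth semialgebraic set, it is a Nash submanifold of $M$ of dimension $d$. We may assume $\G$ is connected and $d\ge1$; if $d=0$, then $\G$ is a point and the claim is trivial.

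\emph{Step 1: a nonempty open piece of $\G$ lies in $E$.} The set $\G\cap E$ is semialgebraic of dimension $d$. By the Nash Stratification (Proposition~\ref{subordinate-strat}) applied to $\G$ and $\G\cap E$, some $d$-dimensional stratum $S_\alpha$ is contained in $\G\cap E$. A $d$-dimensional Nash submanifold inside the $d$-dimensional manifold $\G$ is open in $\G$. Hence
\[
U:=\Int_\G(\G\cap E)\neq\varnothing.
\]

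\emph{Step 2: $U$ is closed in $\G$.} Since $\G$ is connected and $U$ is open and nonempty, it suffices to show that every point $a\in\Cl_\G(U)$ lies in $U$. Take such an $a$ and choose a Nash chart $\psi$ of $\G$ around $a$, identifying a neighbourhood $V$ of $a$ in $\G$ with an open ball $B\subset\R^d$ centred at $\psi(a)=0$. Pick $b\in U\cap V$; shrinking, take a small ball $B'\subset\psi(U\cap V)$ around $\psi(b)$.

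For any $c\in B$, consider the affine line through $\psi(b)$ and $c$. This gives an analytic arc
\[
\gamma_c(t)=\psi^{-1}\bigl(\psi(b)+t(c-\psi(b))\bigr),
\]
defined for $t$ in an open interval containing $[0,1]$ (by convexity of $B$), with values in $M$. For $t$ near $0$, $\gamma_c(t)$ lies in $\psi^{-1}(B')\subset E$. So $\Int(\gamma_c^{-1}(E))\neq\varnothing$, and after reparametrizing the interval to $(-1,1)$, condition (i) of Definition~\ref{def:arc-symm} gives that the whole arc lies in $E$. In particular $\psi^{-1}(c)\in E$.

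Thus $V\subset E$, so $a\in U$. This shows $U$ is closed in $\G$; connectedness then gives $U=\G$, i.e.\ $\G\subset E$, which is the contrapositive.

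The main technical point is Step 2. The concern is that the straight segment in the chart must be a genuine analytic arc into $M$ defined on an open interval. This holds because $\psi^{-1}$ is Nash, hence analytic, on all of $B$, and $B$ is an open convex set containing both endpoints, so the segment extends slightly beyond $[0,1]$. Arcs are allowed to be arbitrary analytic arcs into $M$; in particular they need not be injective.

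Step 1's assertion that a full-dimensional semialgebraic subset of a manifold has nonempty interior is standard. It follows from the stratification, or equivalently from semialgebraic dimension theory.
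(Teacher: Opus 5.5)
Your proof is correct. It is essentially the standard argument behind Kurdyka's Lemma 1.6, which the paper only quotes without proof: a full-dimensional $\Gamma\cap E$ has nonempty interior in $\Gamma$, arc-symmetry along straight segments in analytic charts shows this interior is also closed in $\Gamma$, and connectedness gives $\Gamma\subset E$. You are right to stress that the charts must be analytic, i.e.\ that ``smooth'' means Nash. For a merely $C^2$ semialgebraic curve such as $\Gamma=\{y=|x|^3\}$ and the arc-symmetric set $E=\{y=x^3\}$, the conclusion fails.
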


\begin{lemma}[{\cite[Lem.\,1.5]{Kur}}]
\label{lem:fin-intersection}
Let $\G$ be a connected smooth semialgebraic subset of $M$, and let $\{E_i\}_{i\in I}$ be arc-symmetric subsets of $M$. Then, there exist $i_1,\dots,i_s\in I$ such that
\[
\G\cap\bigcap_{i\in I}E_i\ =\ \G\cap E_{i_1}\cap\dots\cap E_{i_s}\,.
\]
\end{lemma}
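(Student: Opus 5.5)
We prove Lemma~\ref{lem:fin-intersection} by induction on $d=\dim\G$, using Lemma~\ref{lem:dim-drop} to force a dimension drop at each stage. Intersections of arc-symmetric sets need not be semialgebraic a priori, so we work with finite subintersections only; these are semialgebraic, and it is easy to see that a finite intersection of arc-symmetric sets is arc-symmetric (for instance via condition (ii) of Definition~\ref{def:arc-symm}).

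The base case $d=0$ is trivial. Since $\G$ is connected and smooth of dimension $0$, it is a single point. Either that point lies in every $E_i$, and then any single index works, or it fails to lie in some $E_{i_1}$, and then both sides are empty.

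For the inductive step, first suppose $\G\subset E_i$ for all $i$. Then $\G\cap\bigcap_i E_i=\G$, and any single index works. Otherwise choose $i_1$ with $\G\not\subset E_{i_1}$. By Lemma~\ref{lem:dim-drop}, $\dim(\G\cap E_{i_1})<d$. The set $\G\cap E_{i_1}$ is semialgebraic, so by Proposition~\ref{subordinate-strat} we can partition it into finitely many connected Nash submanifolds $\G_1,\dots,\G_k$, each of dimension $<d$. The induction hypothesis applied to each $\G_j$ gives a finite set of indices $J_j\subset I$ such that $\G_j\cap\bigcap_{i\in I}E_i=\G_j\cap\bigcap_{i\in J_j}E_i$. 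Set $J=\{i_1\}\cup J_1\cup\dots\cup J_k$. Then
\[
\G\cap\bigcap_{i\in I}E_i=\bigcup_j\Bigl(\G_j\cap\bigcap_{i\in I}E_i\Bigr)=\bigcup_j\Bigl(\G_j\cap\bigcap_{i\in J_j}E_i\Bigr)\supset \G\cap\bigcap_{i\in J}E_i,
\]
and the reverse inclusion is obvious. This proves the lemma.

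The main subtlety is that the induction hypothesis must apply to the strata $\G_j$. These are connected smooth semialgebraic subsets of $M$, as well as of $\R^n$, so this is fine, and it is exactly why the lemma is stated for arbitrary connected smooth semialgebraic $\G$ rather than for $M$ itself. The real substance lies in Lemma~\ref{lem:dim-drop}, which we take as given.
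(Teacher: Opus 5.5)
Your proof is correct, including the set-theoretic check that $\G\cap\bigcap_{i\in I}E_i=\G\cap\bigcap_{i\in J}E_i$, which works because every point of the right-hand side lies in $E_{i_1}$ and hence in some stratum $\G_j$. The paper does not prove this lemma itself but cites it from Kurdyka, and your argument (induction on $\dim\G$, with Lemma~\ref{lem:dim-drop} forcing the drop and a stratification of $\G\cap E_{i_1}$) is essentially the standard argument behind that citation; your remark that finite intersections are arc-symmetric is not needed, since only the semialgebraicity of $\G\cap E_{i_1}$ is used.
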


By Noetherianity of the $\AR$-topology, for every semialgebraic set $E$ in a Nash manifold $M$ there exists a unique smallest $\AR$-closed set in $M$ containing $E$, called its \emph{$\AR$-closure}.
The following observation follows from the fact that taking Zariski closure of a semialgebraic set does not increase the dimension (\cite[Prop.\,2.8.2]{BCR}). Here and throughout the paper, we write $\Cl^\tau_M(S)$ to denote the closure of a set $S$ in a topological space $(M,\tau)$; if $\tau$ is the Euclidean topology, we write simply $\Cl_M(S)$.

\begin{proposition}
\label{prop:alg-closure-dim}
Let $E$ be an arbitrary semialgebraic set in a Nash manifold $M\subset\R^n$. Let $\Cl^\AR_M(E)$ denote the smallest $\AR$-closed subset of $M$ containing $E$,
let $\Cl^\Na_M(E)$ denote the smallest Nash subset of $M$ containing $E$, and let $\Cl^\Zar(E)$ be the smallest algebraic set in $\R^n$ containing $E$. Then,
\[
\dim{E}=\dim\Cl^\AR_M(E)=\dim\Cl^\Na_M(E)=\dim\Cl^\Zar(E)\,.
\]
\end{proposition}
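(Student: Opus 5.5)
The plan is to sandwich all four dimensions between $\dim E$ and $\dim\Cl^\Zar(E)$, and then invoke the cited fact \cite[Prop.\,2.8.2]{BCR} that $\dim\Cl^\Zar(E)=\dim E$. The argument rests on the inclusion chain
\[
E\subset\Cl^\AR_M(E)\subset\Cl^\Na_M(E)\subset \Cl^\Zar(E)\cap M .
\]
Once this chain is established, monotonicity of semialgebraic dimension gives
\[
\dim E\le\dim\Cl^\AR_M(E)\le\dim\Cl^\Na_M(E)\le\dim\Cl^\Zar(E)=\dim E,
\]
and all the quantities coincide.

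Two inclusions in the chain need justification.

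\emph{The Nash closure lies in the Zariski closure.} The set $\Cl^\Zar(E)\cap M$ is the zero set in $M$ of finitely many polynomials restricted to $M$. These restrictions are Nash functions on $M$, so $\Cl^\Zar(E)\cap M$ is a Nash subset of $M$ containing $E$. By minimality, it contains $\Cl^\Na_M(E)$. (The Nash closure exists because Nash subsets of $M$ satisfy the descending chain condition. Alternatively, one can define $\Cl^\Na_M(E)$ as the common zero set of all Nash functions on $M$ vanishing on $E$, which is Nash by Noetherianity of $\Na(M)$.)

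\emph{The $\AR$-closure lies in the Nash closure.} Every Nash subset $Z=\{f_1=\dots=f_k=0\}$ of $M$ is arc-symmetric. I would check this via condition (ii) of Definition~\ref{def:arc-symm}: if $\gamma$ is an analytic arc with $\gamma((-1,0))\subset Z$, then each $f_j\circ\gamma$ is analytic on $(-1,1)$ and vanishes on $(-1,0)$. By the identity principle it vanishes identically, so $\gamma((-1,1))\subset Z$. Thus $\Cl^\Na_M(E)$ is an $\AR$-closed set containing $E$, and it therefore contains the smallest such set, $\Cl^\AR_M(E)$, which exists by Theorem~\ref{thm:AR-topology}.

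I expect no real obstacle; everything rests on \cite[Prop.\,2.8.2]{BCR}. One point of care is that Theorem~\ref{thm:AR-topology} is stated for connected $M$. If $M$ is not connected, I would apply it on each of the finitely many connected components of $M$ and take the union of the resulting closures, which is again arc-symmetric since each component is open and closed in $M$. The dimensions of all sets involved are computed as semialgebraic dimensions, so monotonicity under inclusion is immediate.
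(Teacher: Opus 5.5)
Your proposal is correct and follows the paper's own route. The paper gives no separate proof beyond noting that the claim follows from \cite[Prop.\,2.8.2]{BCR}, and your chain $E\subset\Cl^\AR_M(E)\subset\Cl^\Na_M(E)\subset\Cl^\Zar(E)\cap M$ (with Nash sets arc-symmetric by the identity principle) is exactly the sandwich implicit in that remark; your extra care about the existence of the closures and about disconnected $M$ fills in details the paper leaves to the reader.
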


The following result is the most consequential for the classical theory (see below for a definition of resolution of singularities).

\begin{theorem}[{\cite[Thm.\,2.6]{Kur}}]
\label{thm:AR-smoothing}
Let $X$ be a Nash set in a Nash manifold $M$, of dimension $k>0$, and let $\pi:\wt{X}\to X$ be its resolution of singularities by blow-ups with smooth Nash centres. Let $E$ be an $\AR$-irreducible subset of $X$ of dimension $k$. Then, there exists a unique connected component $\wt{E}$ of $\wt{X}$ such that
\[
\pi(\wt{E})=\Cl_M(\Reg_k{E})\,.
\]
\end{theorem}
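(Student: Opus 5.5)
The plan is to pass to the resolution and use the irreducibility of $E$ to single out one component there. First, fix the setting. Since $\pi:\wt X\to X$ is a composite of blow-ups with smooth Nash centres, $\wt X$ is a Nash manifold of dimension $k$. It has finitely many connected components $\wt X_1,\dots,\wt X_m$, and each is a connected Nash manifold; by Theorem~\ref{thm:AR-topology} each carries a Noetherian $\AR$ topology. The map $\pi$ is proper and Nash, and it is an isomorphism over the complement of a nowhere dense Nash subset $Z\subset X$ with $\dim Z<k$ (we may take $Z\supset\Sing X$).

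\textbf{Existence.} Consider $\wt E:=\Cl_{\wt X}(\pi^{-1}(\Reg_k E\setminus Z))$. This is semialgebraic of dimension $k$, so it contains open subsets of some components $\wt X_i$. The key step is to show that $\pi^{-1}(E)$ is arc-symmetric in $\wt X$: if $\gamma$ is an analytic arc in $\wt X$ with $\gamma((-1,0))\subset\pi^{-1}(E)$, then $\pi\circ\gamma$ is an analytic arc in $M$ mapping $(-1,0)$ into $E$, hence maps all of $(-1,1)$ into $E$. Consequently, for each $i$, either $\wt X_i\subset\pi^{-1}(E)$, or $\dim(\wt X_i\cap\pi^{-1}(E))<k$ by Lemma~\ref{lem:dim-drop}. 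Let $I$ be the set of indices $i$ with $\wt X_i\subset\pi^{-1}(E)$. Then $E_i:=\pi(\wt X_i)$ is a closed semialgebraic subset of $E$ for $i\in I$. Moreover, $E_i$ is arc-symmetric: an arc in $M$ lying in $E_i$ on $(-1,0)$ can be lifted, generically off $Z$ and then by properness and the valuative criterion for blow-ups, to an arc in $\wt X_i$. Hence $\Cl^\AR(\pi(\wt X_i))$ is contained in $E$ and has dimension $k$.

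Now use $\AR$-irreducibility. The set $\Reg_k E\setminus Z$ is covered by $\bigcup_{i\in I}\pi(\wt X_i)$ up to a set of dimension $<k$, so $E$ equals the union of the $\AR$-closed sets $\Cl^\AR_M(\pi(\wt X_i))$, $i\in I$, together with the $\AR$-closure of the lower-dimensional remainder. By irreducibility, $E$ equals one of these. It cannot be the lower-dimensional remainder, by Proposition~\ref{prop:alg-closure-dim}, so $E=\Cl^\AR(\pi(\wt X_{i_0}))$ for some $i_0$. Then $\pi(\wt X_{i_0})$ is closed (since $\pi$ is proper) and has pure dimension $k$, so it is contained in $\Cl_M(\Reg_k E)$. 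Conversely, I would show that every point of $\Reg_k E\setminus Z$ lies in $\pi(\wt X_{i_0})$. The argument is that the $k$-dimensional connected pieces of $\Reg_k E\setminus Z$ are glued by analytic arcs through $E$, and arc-symmetry of $\pi(\wt X_{i_0})$ propagates membership across them.

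\textbf{Main obstacle.} The main obstacle is exactly this last propagation, namely showing $\Reg_k E\subset\pi(\wt X_{i_0})$, because $\Reg_k E$ may be disconnected. I would handle it by proving that $F:=\pi(\wt X_{i_0})\cup\Cl(\text{remaining part})$ splits $E$ into two $\AR$-closed sets. Specifically, I would show that $E':=\pi\bigl(\bigcup_{i\in I, i\neq i_0}\wt X_i\bigr)\cup\Cl^\AR(E\cap Z)$ is $\AR$-closed, and then write $E=\pi(\wt X_{i_0})\cup E'$. Irreducibility then forces $E=\pi(\wt X_{i_0})$ when $\pi(\wt X_i)\not\subset\pi(\wt X_{i_0})$ for the other $i$, which suffices since then $\Cl(\Reg_kE)\subset\pi(\wt X_{i_0})$.

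\textbf{Uniqueness.} Distinct components $\wt X_i,\wt X_j$ are disjoint open subsets of $\wt X$, and $\pi$ is injective off $\pi^{-1}(Z)$. Hence $\pi(\wt X_i)$ and $\pi(\wt X_j)$ meet only in a set of dimension $<k$, so they cannot both equal $\Cl_M(\Reg_kE)$.
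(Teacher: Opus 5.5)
The paper does not prove this statement; it quotes it from Kurdyka. Judged on its own, your argument has a genuine gap at the step ``Moreover, $E_i$ is arc-symmetric'': this claim is false.

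Your lifting argument only handles arcs $\gamma$ for which $\gamma^{-1}(Z)$ is discrete. An arc that runs inside $Z$ for $t<0$ has no reason to stay in $\pi(\wt X_i)$. Concretely, let $X=E=\{x^2=zy^2\}\subset\R^3$ be the Whitney umbrella, which is $\AR$-irreducible of dimension $2$, and let $\pi$ be the blow-up of the $z$-axis. The strict transform $\wt X$ is connected: in the chart $x=uy$ it is the smooth surface $z=u^2$, and the other chart adds no points. Its image is $\pi(\wt X)=X\setminus\{(0,0,z):z<0\}$. The arc $t\mapsto(0,0,-t)$ lies in $\pi(\wt X)$ for $t\le0$ and leaves it for $t>0$.

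So $\pi(\wt X_{i_0})$ need not be $\AR$-closed, and in this example $E\neq\pi(\wt X_{i_0})$. Your ``main obstacle'' plan is therefore aimed at a false statement: it tries to make $E'$ $\AR$-closed, write $E=\pi(\wt X_{i_0})\cup E'$, and conclude $E=\pi(\wt X_{i_0})$. Likewise, the ``propagation'' via arc-symmetry of $\pi(\wt X_{i_0})$ has nothing to stand on. Note also that joining the components of $\Reg_k{E}$ by analytic arcs in $E$ is essentially Corollary~\ref{cor:1-AR-irred}. That corollary is a consequence of this theorem, so it cannot be invoked here.

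What your lifting argument does prove is that $\pi(\wt X_i)\cup Z$ is arc-symmetric. For this, take $Z$ to be a Nash set of dimension $<k$ containing the intersection of $X$ with the exceptional locus of $\pi$, e.g.\ the set $Y$ of Corollary~\ref{cor:res-Nash} (cf.\ Remark~\ref{rem:exc-locus}). Suppose $\gamma((-1,0))\subset\pi(\wt X_i)\cup Z\subset X$. There are two cases:
\begin{enumerate}
\item[(a)] $\gamma^{-1}(Z)$ has interior. Then $\gamma$ stays in the $\AR$-closed set $Z$.
\item[(b)] $\gamma^{-1}(Z)$ is discrete. Then $\gamma$ lifts to an arc $\wt\gamma$ in $\wt X$. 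This lift passes through the unique preimage in $\wt X_i$ of some $\gamma(t_0)\notin Z$, so $\wt\gamma\subset\wt X_i$ by connectedness.
\end{enumerate}
Hence $\Cl^\AR_M(\pi(\wt X_{i_0}))\subset\pi(\wt X_{i_0})\cup Z$. Your correct intermediate conclusion $E=\Cl^\AR_M(\pi(\wt X_{i_0}))$ then gives $\Reg_k{E}\setminus Z\subset\pi(\wt X_{i_0})$. Since $Z\cap\Reg_k{E}$ is nowhere dense in the $k$-manifold $\Reg_k{E}$ and $\pi(\wt X_{i_0})$ is closed, you get $\Cl_M(\Reg_k{E})\subset\pi(\wt X_{i_0})$. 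Your reverse inclusion and your uniqueness argument are both fine. Together they finish the proof, and no gluing of the components of $\Reg_k{E}$ is needed.
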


\begin{corollary}[{\cite[Cor.\,2.8]{Kur}}]
\label{cor:1-AR-irred}
Let $M$ be a Nash manifold, and let $E\subset M$ be an $\AR$-closed set of dimension $k$. Let $C_1,\dots,C_s$ be the connected components of $\Reg_k{E}$. The following conditions are equivalent:
\begin{itemize}
\item[(i)] $E$ has precisely one $\AR$-irreducible component of dimension $k$.
\item[(ii)] For all $p,q\in\Cl_M(\Reg_k{E})$, there exists an arc $\gamma:[0,1]\to\Cl_M(\Reg_k{E})$ analytic in an open neighbourhood of $[0,1]$, such that
$\gamma(0)=p$ and $\gamma(1)=q$\,.
\item[(iii)] For every $j$, there exists a nonempty open subset $S_j$ in $C_j$ such that, for all points $p_j\in S_j$ there are analytic arcs $\gamma_j:[0,1]\to\Cl_M(\Reg_k{E})$ with
$\gamma_j(0)=p_j$ and $\gamma_j(1)=p_{j+1}$ ($j=1,\dots,s-1$).
\end{itemize}
\end{corollary}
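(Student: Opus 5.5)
We prove the equivalence of (i), (ii) and (iii) in Corollary~\ref{cor:1-AR-irred} by reducing everything to Theorem~\ref{thm:AR-smoothing}, applied to a resolution of singularities of a Nash set containing $E$. Put $X=\Cl^\Na_M(E)$. By Proposition~\ref{prop:alg-closure-dim}, $\dim X=k$. Let $\pi:\wt X\to X$ be a resolution by blow-ups with smooth Nash centres. We use one basic fact throughout: $\Cl_M(\Reg_k E)$ is the union of the $k$-dimensional $\AR$-irreducible components of $E$. This holds because each such component $E_i$ has $\Reg_k E_i$ dense in it. Indeed, otherwise $\Cl_M(\Reg_kE_i)$ together with the lower-dimensional $\AR$-closure of $E_i\setminus\Cl_M(\Reg_k E_i)$ would split $E_i$, using Lemma~\ref{lem:dim-drop}. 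Moreover, $\Reg_k E$ and $\bigcup_i\Reg_k E_i$ differ only by a set of dimension $<k$.

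\textbf{(i)$\Rightarrow$(ii).} Let $E_1$ be the unique $k$-dimensional $\AR$-irreducible component of $E$. Then $\Cl_M(\Reg_k E)=\Cl_M(\Reg_k E_1)$. By Theorem~\ref{thm:AR-smoothing} this set equals $\pi(\wt E)$ for a connected component $\wt E$ of $\wt X$. Take $p,q$ in it and lift them to $\tilde p,\tilde q\in\wt E$. Since $\wt E$ is a connected Nash manifold, there is an arc $\tilde\gamma$ in $\wt E$ joining $\tilde p$ to $\tilde q$ that is analytic on a neighbourhood of $[0,1]$. To get it, join the points by a finite chain of analytic coordinate segments, then smooth the chain into a single analytic arc, for example by Whitney/Grauert approximation of a smooth path by an analytic one with fixed endpoints. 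Then $\gamma=\pi\circ\tilde\gamma$ is the required arc.

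\textbf{(ii)$\Rightarrow$(iii).} This is immediate: take $S_j=C_j$ and join consecutive points.

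\textbf{(iii)$\Rightarrow$(i).} Suppose, for contradiction, that $E$ has at least two $k$-dimensional components. Group them as $E'=E_1$ and $E''=\bigcup_{i\ge2}E_i$, where the union runs over the $k$-dimensional components only. Then $\dim(E'\cap E'')<k$. Each $C_j$ is a connected $k$-manifold, so by Lemma~\ref{lem:dim-drop} it lies in $E'$ or in $E''$ (up to a set of smaller dimension, and in fact entirely, by connectedness and closedness). Both cases occur, so there is an index $j$ with $C_j\subset E'$ and $C_{j+1}\subset E''$. Shrinking $S_j,S_{j+1}$, choose $p_j\in S_j$ and $p_{j+1}\in S_{j+1}$ outside the lower-dimensional set $E'\cap E''$. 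Consider the analytic arc $\gamma_j$ in $\Cl_M(\Reg_kE)=E'\cup E''$. The sets $\gamma_j^{-1}(E')$ and $\gamma_j^{-1}(E'')$ are closed semialgebraic subsets of $[0,1]$ covering it, hence finite unions of intervals and points. So one of them has nonempty interior, and since their union is connected they overlap or share an endpoint. Arc-symmetry (Definition~\ref{def:arc-symm}(i)) of $E'$ and $E''$ now forces the whole arc into one of them. That contradicts $p_j\notin E''$ or $p_{j+1}\notin E'$.

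\textbf{Main obstacle.} The delicate step is constructing a single arc analytic on a neighbourhood of $[0,1]$ inside $\wt E$ in (i)$\Rightarrow$(ii), rather than only a piecewise analytic one. If approximation with fixed endpoints is awkward, we would instead use the Nash structure: embed $\wt E$ as a connected nonsingular algebraic set and use a polynomial path approximation projected back by a Nash retraction.
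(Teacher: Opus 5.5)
The paper gives no proof of this corollary: it is quoted from Kurdyka, where it appears as a consequence of Theorem~\ref{thm:AR-smoothing}. Your three implications follow that natural route and their skeleton is sound. However, the ``basic fact'' you open with is false. A $k$-dimensional $\AR$-irreducible set $E_i$ need not equal $\Cl_M(\Reg_kE_i)$. For example, the Whitney umbrella $W=\{x^2=zy^2\}\subset\R^3$ is $\AR$-irreducible: the handle $\{x=y=0,\ z<0\}$ is forced into the $\AR$-closure of $\Reg_2W$ by arc-symmetry along the $z$-axis (cf.\ Example~\ref{ex:2reducible}). Yet $\Cl(\Reg_2W)=W\cap\{z\geq0\}$ misses the handle. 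This is exactly why Theorem~\ref{thm:AR-smoothing} gives $\pi(\wt E)=\Cl_M(\Reg_kE)$ rather than $\pi(\wt E)=E$.

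Your splitting argument breaks because $\Cl_M(\Reg_kE_i)$ need not be arc-symmetric; what it actually proves is $E_i=\Cl^\AR_M(\Reg_kE_i)$. Consequently, $\Cl_M(\Reg_kE)$ is in general strictly smaller than the union of the $k$-dimensional components. In particular, the equality $\Cl_M(\Reg_kE)=E'\cup E''$ that you invoke in (iii)$\Rightarrow$(i) is wrong.

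The proof survives, because only weaker, true statements are really used.
\begin{itemize}
\item In (i)$\Rightarrow$(ii) you need only $\Cl_M(\Reg_kE)=\Cl_M(\Reg_kE_1)$. This follows from your correct second remark: $\Reg_kE\subset\Reg_kE_1$, and the difference lies in the union of the lower-dimensional components, so it is nowhere dense in $\Reg_kE_1$.
\item In (iii)$\Rightarrow$(i) you need only $\Cl_M(\Reg_kE)\subset E'\cup E''$. This follows from your own observation that each $C_j$ lies in $E'$ or in $E''$, together with closedness of $E'\cup E''$.
\end{itemize}

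Two smaller points also need fixing. First, $\dim(E'\cap E'')<k$ is asserted without proof. It holds for the following reason. Suppose $\dim(E_1\cap E_i)=k$. Since $\pi$ is an isomorphism off a set of dimension $<k$, $\pi^{-1}(E_i)\cap\wt E_1$ is then a $k$-dimensional arc-symmetric subset of the connected manifold $\wt E_1$ from Theorem~\ref{thm:AR-smoothing}. Lemma~\ref{lem:dim-drop} then gives $\Cl_M(\Reg_kE_1)=\pi(\wt E_1)\subset E_i$. Hence $E_1=\Cl^\AR_M(\Reg_kE_1)\subset E_i$, contradicting irredundancy of the decomposition.

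Second, $\gamma_j^{-1}(E')$ and $\gamma_j^{-1}(E'')$ are not semialgebraic, since $\gamma_j$ is only analytic. They are, however, closed and cover $[0,1]$, so by Baire one of them has nonempty interior. That is all the arc-symmetry step needs, once $\gamma_j$ is extended analytically to an open interval.
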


Another critical corollary to Theorem~\ref{thm:AR-smoothing} is the arc-analytic Identity Principle:

\begin{proposition}[{\cite[Prop.\,5.3]{Kur}}]
\label{prop:ident-princ}
Let $X$ be an $\AR$-irreducible set in a Nash manifold $M$, and let $f:X\to\R$ be a semialgebraic arc-analytic function on $X$ such that $f|_U=0$ for a semialgebraic set $U$ with $\dim{U}=\dim{X}$. Then, $f\equiv0$.
\end{proposition}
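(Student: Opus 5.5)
The plan is to reduce the Identity Principle to Theorem~\ref{thm:AR-smoothing}: lift $f$ to a resolution of singularities, where it becomes an analytic function on a connected manifold, and then apply the classical identity principle there. Set $k=\dim X$. If $k=0$, then $X$ is a single point because it is $\AR$-irreducible, and the conclusion is trivial. So assume $k>0$.

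First I would check that $f$ vanishes on all of $X$ once it vanishes near the top-dimensional part. Let $Z=f^{-1}(0)$. I expect $Z$ to be arc-symmetric, because $f\circ\gamma$ is analytic for every analytic arc $\gamma$ in $X$, so vanishing on an open interval forces vanishing everywhere. Here I am using that $X$ is $\AR$-closed in $M$, so arc-symmetry relative to $X$ and relative to $M$ agree. Then $Z$ is an $\AR$-closed subset of $X$ with $\dim Z\geq\dim U=k$. By Lemma~\ref{lem:dim-drop} and irreducibility, one might hope to conclude $Z=X$ directly. But irreducibility only says $X$ is not a union of two proper closed subsets, so a proper closed subset of full dimension is not excluded by that alone. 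This is exactly where Theorem~\ref{thm:AR-smoothing} enters.

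Next, take $X'=\Cl^{\Zar}(X)$ or the Nash closure $\Cl^\Na_M(X)$, which has dimension $k$ by Proposition~\ref{prop:alg-closure-dim}. Take its resolution $\pi:\wt{X}\to X'$ by blow-ups with smooth Nash centres. Theorem~\ref{thm:AR-smoothing} then gives a connected component $\wt{E}$ of $\wt{X}$ with $\pi(\wt{E})=\Cl_M(\Reg_k X)$. Since $X$ is closed and irreducible, I also expect $X=\Cl_M(\Reg_k X)$: the closure is $\AR$-closed (it is $\pi$ of an analytic component), and otherwise $X$ would split as the union of this closure and the $\AR$-closure of the lower-dimensional remainder. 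Consider $g=f\circ\pi|_{\wt{E}}$. It is semialgebraic and arc-analytic on the Nash manifold $\wt{E}$, so by Kurdyka's results (Bierstone--Milman) it is analytic off a nowhere dense semialgebraic set. It is also continuous, and that alone may suffice.

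Since $\dim U=k$, we can shrink $U$ to an open piece of $\Reg_k X$ where $\pi$ is a local isomorphism (the centres lie over a set of dimension $<k$ after generic shrinking). Its preimage $\wt{U}$ is then a nonempty open subset of $\wt{E}$ on which $g=0$. I then need to show that $g$ vanishes on all of $\wt{E}$. Let $W$ be the interior of $g^{-1}(0)$ in $\wt{E}$, which is nonempty. If $W\neq\wt{E}$, connectedness gives a boundary point $p$. By the Curve Selection Lemma (Proposition~\ref{prop:Nash-curve-sel}), choose a Nash arc through $p$ entering $W$ and also passing through points outside $g^{-1}(0)$ arbitrarily near $p$. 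This is the delicate step. I would use that $\{g\neq0\}$ is semialgebraic and open with $p$ in its closure to get an arc $\gamma$ with $\gamma((0,1))\subset\{g\neq0\}$. I would then modify it, in local coordinates at $p$, by translating by a small vector so that $\gamma((-1,0))$ meets the open set $W$. Alternatively and more cleanly, the zero set of $g$ is arc-symmetric in the connected Nash manifold $\wt{E}$ and has nonempty interior, so Lemma~\ref{lem:dim-drop} applied with $\G=\wt{E}$ forces $g^{-1}(0)=\wt{E}$. I would use this second route; it avoids the arc construction entirely.

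Finally, $f=0$ on $\pi(\wt{E})=\Cl_M(\Reg_kX)=X$. The main obstacle is justifying $X=\Cl_M(\Reg_k X)$ for irreducible $X$ and ensuring $U$ contributes an open set in $\wt{E}$. Both follow from dimension counting together with Proposition~\ref{prop:alg-closure-dim}.
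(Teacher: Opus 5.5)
Your strategy is the right one: the paper presents this proposition as a corollary of Theorem~\ref{thm:AR-smoothing}. Pulling $f$ back to the connected component $\wt{E}$ given by that theorem and applying Lemma~\ref{lem:dim-drop} to the arc-symmetric zero set of $f\circ\pi|_{\wt{E}}$ in the connected Nash manifold $\wt{E}$ correctly gives $f=0$ on $\pi(\wt{E})=\Cl_M(\Reg_k X)$.

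The gap is in the final step. Your claim that $X=\Cl_M(\Reg_k X)$ for $\AR$-irreducible $X$ is false, and so is its justification that $\pi(\wt{E})$, being the image of an analytic component, is $\AR$-closed. The Whitney umbrella $W=\{x^2=zy^2\}\subset\R^3$ is a counterexample:
\begin{itemize}
\item $W$ is $\AR$-irreducible. The two components $\{y>0\}$ and $\{y<0\}$ of $\Reg_2 W$ are joined by the analytic arcs $v\mapsto(u_0v,v,u_0^2)$. Any $\AR$-closed set containing $\Reg_2 W$ therefore contains $\Cl(\Reg_2W)$, and then, via the arc $t\mapsto(0,0,t)$, the whole $z$-axis.
\item Yet $\Cl(\Reg_2 W)=W\cap\{z\geq0\}$ misses the handle $\{x=y=0,\ z<0\}$.
\item The same arc $t\mapsto(0,0,t)$ shows that $W\cap\{z\geq0\}$ is not arc-symmetric. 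This is exactly the phenomenon in Example~\ref{ex:2reducible}.
\end{itemize}
So, as written, your argument says nothing about $f$ on lower-dimensional parts of $X$ lying outside $\Cl_M(\Reg_k X)$.

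The repair uses only what you already have. In your first paragraph you showed that $Z=f^{-1}(0)$ is $\AR$-closed in $M$, and your resolution step shows $Z\supset\pi(\wt{E})\supset\Reg_k X$. Hence $X=Z\cup\Cl^{\AR}_M(X\setminus\Reg_k X)$. The second set is $\AR$-closed of dimension $<k$ by Proposition~\ref{prop:alg-closure-dim}, so it is a proper subset of $X$, and $\AR$-irreducibility forces $X=Z$.

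In other words, your splitting argument is correct once the Euclidean closure is replaced by the $\AR$-closure, giving $X=\Cl^{\AR}_M(\Reg_k X)$. The conclusion must then come from the arc-symmetry of the zero set of $f$, not from the false equality $\pi(\wt{E})=X$.
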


More recently, the first named author together with H.\,Seyedinejad proved that the $\AR$-closed sets are precisely the zero-loci of arc-analytic functions, and consequently that Nullstellensatz holds in the arc-analytic category.

\begin{theorem}[{\cite[Thm.\,1]{ASe1}}]
\label{thm:ASey-zero-set-thm}
Let $X$ be an $\AR$-closed set in a Nash manifold $M$. There exists a semialgebraic arc-analytic function $f:M\to\R$ such that $X=f^{-1}(0)$.
\end{theorem}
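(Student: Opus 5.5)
We need to prove: for an $\AR$-closed set $X$ in a Nash manifold $M$, there is a semialgebraic arc-analytic $f:M\to\R$ with $X=f^{-1}(0)$.

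The plan is induction on $\dim X$, using the Noetherianity of Theorem~\ref{thm:AR-topology}. Since a finite union of zero sets is the zero set of a product, it suffices to treat an $\AR$-irreducible $X$, say of dimension $k$. If $k=0$, then $X$ is a finite set and $f(x)=\prod_j|x-a_j|^2$ works; this is polynomial, hence Nash. We may assume $M\subset\R^n$ is algebraic and nonsingular.

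\textbf{Step 1: cut out $Z$.} Let $Z=\Cl^\Zar(X)$. By Proposition~\ref{prop:alg-closure-dim}, $\dim Z=k$. Choose a polynomial $g\ge0$ with $Z=g^{-1}(0)$, e.g.\ a sum of squares of generators.

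\textbf{Step 2: separate $X$ from the rest of $Z$ in dimension $k$.} By Theorem~\ref{thm:AR-smoothing}, take a resolution $\pi:\wt Z\to Z$ obtained by blowing up $M$ along smooth Nash centres, with $\wt Z$ the strict transform, and let $\wt E$ be the connected component with $\pi(\wt E)=\Cl_M(\Reg_k X)$. Let $\sigma:\wt M\to M$ be the composite of blow-ups, so $\wt M$ is again a Nash manifold. Since $\wt E$ is a union of connected components of the smooth closed Nash set $\wt Z$, there is a Nash function $\wt h$ on $\wt M$ with $\wt h^{-1}(0)=\wt Z\setminus\wt E$; take an equation for $\wt Z$ adjusted by a locally constant factor.

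We then want to push $\wt h$ down to $M$. Define $h(x)=\min\{\wt h(y)^2:\sigma(y)=x\}$, or rather a symmetric function of the values of $\wt h^2$ on the fibre. Then $g+h$, or better $g^{2N}+h$, vanishes exactly on $Z\cap\sigma(\wt Z\setminus\wt E)$. This min-over-fibre construction is the main obstacle, since taking a minimum destroys arc-analyticity.

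The alternative I would try first is this. Arc-analyticity of a function on $M$ is equivalent to arc-analyticity of its pullback together with descent: every analytic arc in $M$ lifts to $\wt M$ after blow-ups, by the arc-lifting property of proper modifications. Therefore it suffices to construct $F$ on $M$ that is semialgebraic and whose pullback $F\circ\sigma$ is Nash, or arc-analytic, on $\wt M$. Concretely, I would take $F\circ\sigma=\wt h^2\cdot\sigma^*g+\sigma^*g^{2}$, suitably modified so that it is constant on fibres of $\sigma$. Off the exceptional locus this is automatic; over the centres I would multiply by a high power of the pullback of an equation of the centres, which vanishes on the whole fibre and so makes the function descend.

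\textbf{Step 3: the induction.} The resulting $F_1$ vanishes on $X$ plus a set $Y_1$ of dimension $<k$; the extra zeros come from the centres and from $\sigma(\wt Z\setminus\wt E)\cap\Cl(\Reg_kX)$. Then $Y=\Cl^\AR_M(F_1^{-1}(0)\setminus X)$ has dimension $<k$ by Lemma~\ref{lem:dim-drop}. Write $Y=(Y\cap X)\cup Y'$ with $\dim Y'<k$, take $G$ arc-analytic with $G^{-1}(0)=Y'$ by the inductive hypothesis, and handle $Y'\setminus X$ by a further product or sum trick, using induction again on the $\AR$-closed set $X\cap Y'$.

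Finally, set $f=F_1^2+G^2\cdot(\cdots)$, arranged so that its zero set is exactly $X$. Making this combination correct, with zeros neither lost nor added on $Y'$, together with the descent argument in Step 2, are the delicate points.
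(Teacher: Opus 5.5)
There is a genuine gap in Step~3. It is structural, not a matter of bookkeeping: the combination you describe cannot produce $X$.

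After Step~2 (suitably repaired, see below) you have $F_1\in\II_M(X)$ with $F_1^{-1}(0)=X\cup W$. Here $W$ contains the exceptional locus, which lies in $\Sing Z$, so $\dim W<k$. Since $\ZZ_M(ab)=\ZZ_M(a)\cup\ZZ_M(b)$ and $\ZZ_M(a^2+b^2)=\ZZ_M(a)\cap\ZZ_M(b)$, consider any function built by products and sums of squares from $F_1$ and from functions given by the inductive hypothesis (whose zero sets have dimension $<k$). By induction on the expression, its zero set either contains $X\cup W$ or has dimension $<k$. So if $W\not\subset X$, you never reach $X$. The factor $(\cdots)$ in $F_1^2+G^2\cdot(\cdots)$ would have to vanish on the $k$-dimensional part of $X$ but not on $W\setminus X$; that is a function of exactly the kind you are trying to construct.

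This obstruction does occur. Take $X=\{(x,y,z):(x,y)\neq0,\ z=x^3/(x^2+y^2)\}\cup\{0\}\subset\R^3$, the graph of an arc-analytic function. Then $Z=\{z(x^2+y^2)=x^3\}=X\cup\{x=y=0\}$. One blow-up along the $z$-axis resolves $Z$, and the strict transform maps onto $X$. Your descent, multiplying by a power of $\sigma^*(x^2+y^2)$, gives $F_1^{-1}(0)=X\cup\{x=y=0\}$. No product or sum-of-squares combination with functions vanishing on sets of dimension $\le1$ can repair this.

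A correct defining function here is $z-x^3/(x^2+y^2)$. Its pull-back equals the \emph{nonzero} constant $z_0$ on the exceptional fibre over $(0,0,z_0)$. So over $W\setminus X$ the lifted function must be a nonzero constant on each fibre. That is precisely the fibrewise-constancy problem you met with the min-over-fibre idea, and then evaded by forcing zeros over the centres.

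What is missing is a way to separate each $p\in M\setminus X$, in particular each $p\in W\setminus X$, from $X$ by an element of $\II_M(X)$. Once $\ZZ_M(\II_M(X))=X$, Lemma~\ref{lem:zero-set-by-one-function} (Noetherianity, via Lemma~\ref{lem:general-finite-intersection}) gives a single $f$ with $f^{-1}(0)=X$, and no induction on dimension is needed. Your resolution argument gives separating functions only for $p\notin W$.

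The paper cites the statement rather than proving it. It does note that the statement follows from Theorem~\ref{thm:ASey-arc-an-extension} as the case $S=M$ of Proposition~\ref{prop:AEP-implies-ZSP}, where separation comes from the arc-analytic Urysohn lemma (Proposition~\ref{prop:arcan-Urysohn}). Most directly: $X\cup\{p\}$ is $\AR$-closed, and the function equal to $0$ on $X$ and $1$ at $p$ is arc-analytic on it. Any extension of it to $M$ lies in $\II_M(X)$ and does not vanish at $p$.

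Separately, Step~2 as written needs fixing:
\begin{itemize}
\item $\wt h$ should vanish on $\wt E$, not on $\wt Z\setminus\wt E$.
\item $\wt h^2\sigma^*g+(\sigma^*g)^2=\sigma^*g\,(\wt h^2+\sigma^*g)$ vanishes on all of $\sigma^{-1}(Z)$.
\item $\sigma(\wt E)=\Cl_M(\Reg_kX)$ may be strictly smaller than $X$ (e.g.\ the handle of the Whitney umbrella). So you must also multiply by a function cutting out $\Cl^\AR_M(X\setminus\Cl_M(\Reg_kX))$.
\end{itemize}
Take $\wt H$ with $\wt H^{-1}(0)=\wt E$ and use the multiplier $(\sigma^*c)^2$, where $c$ is a polynomial equation of the Zariski closure of the exceptional locus. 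Then the descent is sound: arcs not contained in the $\AR$-closed exceptional locus meet it only at isolated points and lift, and arcs inside it see only zeros. But this only produces the $F_1$ discussed above.
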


\begin{theorem}[{\cite[Prop.\,1]{ASe1}}]
\label{thm:AR-Nullstellensatz}
Let $X$ be an $\AR$-closed set in a Nash manifold $M$.
Let $\II_X(Y)$ denote the vanishing ideal in $\Aa(X)$ of $Y\subset X$, and let $\ZZ_X(I)$ denote the zero-set of an ideal $I$ in $\Aa(X)$. Then:
\begin{itemize}
\item[(i)] If \,$Y\subset X$ is $\AR$-closed, then $\ZZ_X(\II_X(Y))=Y$.
\item[(ii)] If $I$ is an ideal in $\Aa(X)$, then $\II_X(\ZZ_X(I))=\rad_X(I)$.
\end{itemize}
\end{theorem}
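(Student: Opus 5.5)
The plan is to derive both parts of Theorem~\ref{thm:AR-Nullstellensatz} from the zero-set theorem (Theorem~\ref{thm:ASey-zero-set-thm}), together with a {\L}ojasiewicz-type inequality and Noetherianity of the $\AR$ topology. Throughout I take $\Aa(X)$ to be the semialgebraic arc-analytic functions on $X$, i.e.\ semialgebraic functions whose composition with every analytic arc in $X$ is analytic. I also assume, as recalled in the introduction (item~(v)), that such functions extend to arc-analytic semialgebraic functions on $M$. By an $\AR$-closed set I mean an $\AR$-closed subset of $M$.

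\emph{Part (i).} The inclusion $Y\subset\ZZ_X(\II_X(Y))$ is tautological. For the reverse inclusion, apply Theorem~\ref{thm:ASey-zero-set-thm} to $Y$ to get $f\in\Aa(M)$ with $f^{-1}(0)=Y$. Then $f|_X\in\Aa(X)$ vanishes on $Y$, so $f|_X\in\II_X(Y)$. Hence $\ZZ_X(\II_X(Y))\subset (f|_X)^{-1}(0)=Y$.

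\emph{Part (ii), setup.} The inclusion $\rad_X(I)\subset\II_X(\ZZ_X(I))$ is clear. Conversely, let $g\in\Aa(X)$ vanish on $Z:=\ZZ_X(I)$. Each zero set of an element of $\Aa(X)$ is $\AR$-closed, since it is semialgebraic and closed under arc continuation because arc-analytic functions are analytic along arcs. By Noetherianity (Theorem~\ref{thm:AR-topology}, applied componentwise on $M$), there are $f_1,\dots,f_s\in I$ with $Z=\bigcap_i f_i^{-1}(0)$. Put $f=\sum f_i^2\in I$; then $Z=f^{-1}(0)$ and $g^{-1}(0)\supset f^{-1}(0)$.

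\emph{Part (ii), main step.} I then want $N\ge1$ and $h\in\Aa(X)$ with $g^N=hf$. Arc-analytic semialgebraic functions are continuous (a standard fact in the semialgebraic setting), and $X$ is closed in $M$, hence locally closed. So the {\L}ojasiewicz inequality for continuous semialgebraic functions gives $N$ and a continuous semialgebraic $u$ with $|g|^N\le |u|\,|f|$ on $X$. Taking $N$ larger, I would set $h=g^{2N}/f$ off $Z$ and $h=0$ on $Z$; then $g^{2N}=hf$.

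The main obstacle is showing that $h$ is arc-analytic. Let $\gamma$ be an analytic arc in $X$. If $f\circ\gamma\not\equiv0$, then $g^{2N}\circ\gamma/f\circ\gamma$ is meromorphic. The {\L}ojasiewicz bound, with $N$ large, forces the order of $g\circ\gamma$ to dominate that of $f\circ\gamma$, so the quotient is analytic and vanishes at the zero. If $f\circ\gamma\equiv0$, then $\gamma$ lies in $Z$ and $h\circ\gamma\equiv0$. The delicate point is making $N$ uniform over all arcs, which the global {\L}ojasiewicz exponent provides. Semialgebraicity of $h$ is clear. Thus $g\in\rad_X(I)$.
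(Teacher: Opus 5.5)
Your proposal is correct and is essentially the argument the paper gives for its generalization, Theorem~\ref{thm:general-Nullstellensatz} with $S=M$: part (i) follows from the zero-set theorem by restriction. Part (ii) reduces $\ZZ_X(I)$ to the zero set of a single $f\in I$ via Noetherianity (Lemma~\ref{lem:zero-set-by-one-function}) and then applies the weak Nullstellensatz of Proposition~\ref{prop:weak-nullstellensatz}, where the paper takes $h=g\,h_1$ with $g^{k_1}=fh_1$ in place of your $g^{2N}/f$. The extension property you assume at the start is never used, so you can drop it.
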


Lastly, arc-analytic functions on $\AR$-closed sets in $M$ are but restrictions of arc-analytic functions on $M$.

\begin{theorem}[{\cite[Thm.\,1]{ASe2}}]
\label{thm:ASey-arc-an-extension}
Let $X$ be an $\AR$-closed set in a Nash manifold $M$, and let $f:X\to\R$ be an arc-analytic semialgebraic function on $X$. Then, there exists an arc-analytic semialgebraic function $F:M\to\R$ such that $F|_X=f$. In other words,
\[
\Aa(X) \,\cong \,\Aa(M)/\II_M(X)
\]
as $\R$-algebras.
\end{theorem}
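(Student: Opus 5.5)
We must prove Theorem~\ref{thm:ASey-arc-an-extension}: an arc-analytic semialgebraic $f$ on an $\AR$-closed $X\subset M$ extends to an arc-analytic semialgebraic $F:M\to\R$. We argue by induction on $\dim X$. The case $\dim X=0$ is immediate, since $X$ is then a finite set of points and we may take a Nash (e.g.\ polynomial interpolating) function. For the inductive step we reduce to a single irreducible piece. Since $\AR$-closed sets are Noetherian, $X$ is a finite union of $\AR$-irreducible components. We first extend $f$ from a top-dimensional component, then correct on the remainder using zero-set functions from Theorem~\ref{thm:ASey-zero-set-thm}.

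\textbf{Top-dimensional piece.} Let $X_0=\Cl^\Zar(X)$, a Nash set of dimension $k$ by Proposition~\ref{prop:alg-closure-dim}. Take a resolution $\pi:\wt X\to X_0$ by blow-ups with smooth Nash centres, and compose with the ambient sequence of blow-ups $\sigma:\wt M\to M$, so that $\wt X$ is a union of connected components of a smooth Nash submanifold of $\wt M$. By Theorem~\ref{thm:AR-smoothing}, the $k$-dimensional part of each irreducible component of $X$ is the image of a connected component $\wt E$ of $\wt X$. The composite $f\circ\pi$ is arc-analytic and semialgebraic on the Nash manifold $\wt E$ (arcs into $\wt E$ push forward to arcs into $X$). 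By Bierstone--Milman--Parusi\'nski type results, an arc-analytic semialgebraic function on a Nash manifold becomes Nash after a further finite sequence of blow-ups. After enlarging $\sigma$, we may therefore assume $f\circ\pi|_{\wt E}$ is Nash. A Nash function on a closed Nash submanifold extends to a Nash function $\wt G$ on a neighbourhood, and then on all of $\wt M$ by Nash tubular neighbourhood retraction and patching with a Nash cut-off.

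\textbf{Descending the extension.} Define $G$ on $M$ by setting $G=\wt G\circ\sigma^{-1}$ off the exceptional locus. This is well defined only if $\wt G$ is constant on the fibres of $\sigma$, which generally fails. This is the main obstacle: arc-analytic functions on $M$ pulled back by $\sigma$ are exactly the functions constant on fibres in an arc-analytic sense. The approach I would try first is to multiply by a high power of a Nash function $h$ vanishing exactly on the centres (the Jacobian of $\sigma$). This makes $h^N\wt G$ descend to an arc-analytic function $G'$ on $M$ that agrees with $h^N f$ on $X$. Let $Z$ be the $\AR$-closure in $M$ of the singular and exceptional locus meeting $X$; it has $\dim Z<k$. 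Then $f$ and $G'/h^N$ agree on $X\setminus Z$. By the induction hypothesis applied to $X\cap Z$, which is $\AR$-closed, we correct the discrepancy. Here one uses the division trick $F=G'/h^N$ where $h^N$ divides arc-analytically, or alternatively Kurdyka's observation that the function $f$ on $X$ together with the value $\wt G\circ\sigma^{-1}$ off $X$ is arc-analytic once it is continuous along arcs. Checking arc-analyticity at points of $Z$ is done by lifting each analytic arc in $M$ through $\sigma$, using the arc-lifting property of blow-ups, to an analytic arc in $\wt M$.

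\textbf{Gluing components.} For the lower-dimensional remainder $Y=\Cl^\AR_M\bigl(X\setminus\Cl_M(\Reg_k X)\bigr)\cup(X\cap Z)$, consider the arc-analytic function $f-F$ on $Y$. By induction it extends to some $H$ on $M$. Let $g\ge0$ be an arc-analytic function with $g^{-1}(0)=X_k$, the top-dimensional part, given by Theorem~\ref{thm:ASey-zero-set-thm}. Set
\[
F_{\mathrm{final}}=F+\frac{g^{2m}}{g^{2m}+q}\,H,
\]
where $q$ vanishes exactly on $Y$, again by Theorem~\ref{thm:ASey-zero-set-thm}. For large $m$, {\L}ojasiewicz-type control of $g$ along arcs ensures the quotient is arc-analytic. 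Verifying that this quotient is arc-analytic along arcs lying in $X_k\cap Y$ is the second delicate point, and would be handled by order-of-vanishing comparisons along each arc.
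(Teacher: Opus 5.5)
The paper does not prove this statement: it is quoted from \cite{ASe2} and used as a black box, for example as the case $s=0$ of Theorem~\ref{thm:basic-closed-has-AEP}. Your argument therefore has to stand on its own. It has a genuine gap, exactly at the step you call the main obstacle.

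\textbf{The descent step is circular.} Let $C\subset M$ be the exceptional locus of $\sigma$ and $D=\sigma^{-1}(C)$. Your $G'=(h^N\wt G)\circ\sigma^{-1}$, extended by $0$ on $C$, is indeed in $\Aa(M)$ (already for $N=1$). However, on $X$ it equals $\vp^Nf$, where $\vp=h\circ\sigma^{-1}$ and $\vp^{-1}(0)=C$.

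To recover $f$ you must divide by $\vp^N$ in $\Aa(M)$. Any $F\in\Aa(M)$ with $F\vp^N=G'$ satisfies $F\circ\sigma=\wt G$ on $\wt M\setminus D$, hence on $\wt M$ by continuity. So the division is possible if and only if $\wt G$ is constant on the fibres of $\sigma$, which is precisely what fails.

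The ``Kurdyka observation'' variant has the same defect. $\wt G\circ\sigma^{-1}$ has no continuous extension to points of $C\setminus X$, which lie off $X$, so no induction on the subset $X\cap Z$ of $X$ can repair it. Moreover, at that stage you have no element of $\Aa(M)$ agreeing with $f$ on $X\setminus Z$, so there is nothing to ``correct''.

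A smaller point: Nash cut-offs do not exist, by the identity principle. The Nash extension of $f\circ\sigma|_{\wt E}$ to $\wt M$ should instead be quoted from Efroymson's extension theorem.

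\textbf{The missing idea} is to force fibre-constancy by prescribing the lift on all of $D$, using induction.
Let $\wt E$ be the union of the components of the strict transform with $\sigma(\wt E)=\bigcup_i\Cl_M(\Reg_kX_i)$, where the $X_i$ are the $k$-dimensional $\AR$-components of $X$. Let $W=(X\cap C)\cup\Cl^\AR_M(X\setminus\Cl_M(\Reg_kX))$; this is an $\AR$-closed subset of $X$ with $\dim W<k$. By induction, $f|_W$ extends to some $\psi\in\Aa(M)$. By Theorem~\ref{thm:ASey-zero-set-thm}, take $d,q\in\Aa(\wt M)$ with $d^{-1}(0)=\sigma^{-1}(C\cup W)$ and $q^{-1}(0)=\wt E$, and set
\[
\wt G_1=\psi\circ\sigma+(\wt G-\psi\circ\sigma)\,\frac{d^2}{d^2+q^2}\,,\qquad \wt G_1=\psi\circ\sigma\ \text{on}\ \wt E\cap\sigma^{-1}(C\cup W)\,.
\]
This works for the following reasons:
\begin{enumerate}
\item[(a)] $\wt G-\psi\circ\sigma$ vanishes on $\wt E\cap\sigma^{-1}(C\cup W)$, and the quotient takes values in $[0,1]$.
\item[(b)] Along every arc not contained in this $\AR$-closed set, the quotient is a bounded meromorphic function, hence analytic. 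So $\wt G_1\in\Aa(\wt M)$.
\item[(c)] $\wt G_1=f\circ\sigma$ on $\wt E$ and $\wt G_1=\psi\circ\sigma$ on $D$, so $\wt G_1$ is constant on the fibres of $\sigma$.
\item[(d)] Define $F=\wt G_1\circ\sigma^{-1}$ off $C$ and $F=\psi$ on $C$. Then $F$ is arc-analytic, by lifting arcs not contained in $C$, and $F|_X=f$.
\end{enumerate}

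The same bounded-quotient argument shows that your final gluing step is not delicate: no {\L}ojasiewicz-type estimate is needed, only $H=0$ on $X_k\cap Y$. There, however, $g^{-1}(0)=X_k$ must be $\AR$-closed. This fails for $X_k=\Cl_M(\Reg_kX)$, as the Whitney umbrella shows. The construction above makes that step unnecessary.
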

\smallskip

\subsection*{Resolution of Singularities}

One of the main techniques used in the present paper is that of embedded desingularization. The celebrated theorem of H.\,Hironaka~\cite{H} (see also~\cite{BM} and~\cite{W}) asserts that every real algebraic set has a smooth model, which is isomorphic to the given set outside a subset of strictly smaller dimension.

More precisely, let $X$ be an analytic set in an analytic manifold $M$. Consider a sequence of transformations
\begin{align}
\label{eq:desing}
\notag
&\cdots\ \longrightarrow &M_{j+1} &\qquad\stackrel{\pi_{j+1}}{\longrightarrow} &M_j &\qquad\longrightarrow\ \cdots\ \longrightarrow &M_1 &\qquad\stackrel{\pi_1}{\longrightarrow} &M_0&=M\\
&{~}             &X_{j+1} &{~}                                      &X_j &{~}                                    &X_1 &{~}                                  &X_0&=X\\
\notag
&{~}  					 &E_{j+1} &{~}                                      &E_j &{~}                                    &E_1 &{~}                                  &E_0&=\varnothing
\end{align} 
where, for every $j$, $\pi_{j+1}:M_{j+1}\to M_j$ denotes a blow-up of $M_j$ with a smooth centre $C_j\subset M_j$, $X_{j+1}$ is the strict transform of $X_j$ by $\pi_{j+1}$, and $E_{j+1}$ denotes the exceptional hypersurfaces (that is, $E_{j+1}$ is the union of strict transforms of all hypersurfaces $H\subset E_j$, together with $\pi^{-1}_{j+1}(C_j)$).
\medskip

\begin{theorem}[\textbf{Embedded desingularization}]
\label{thm:res-alg}
Every algebraic set $X$ in $M=\R^n$ admits an embedded desingularization. That is, there is a proper mapping $\pi:\wt{M}\to M$, which is a composition of a finite sequence~\eqref{eq:desing} of blow-ups with smooth algebraic centers, such that $\pi$ is an isomorphism outside the preimage of the (algebraic) singular locus $\Sing{X}$ of $X$, the final strict transform $\wt{X}$ of $X$ is smooth, and $\wt{X}$ and $\pi^{-1}(\Sing{X})$ simultaneously have only normal crossings. (The latter means that every point of $\wt{M}$ admits a (local analytic) coordinate neighbourhood in which $\wt{X}$ is a coordinate subspace and each hypersurface $H$ of $\pi^{-1}(\Sing{X})$ is a coordinate hypersurface.)
\end{theorem}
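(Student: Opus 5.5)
The statement is the classical Hironaka theorem specialised to the real algebraic category, and we would not reprove it from scratch. Instead we would deduce it from one of the constructive (algorithmic) versions of resolution of singularities in characteristic zero: Bierstone--Milman~\cite{BM}, Villamayor, or Włodarczyk~\cite{W}. We then check that the output has the stated properties over $\R$.

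First, we apply the canonical embedded desingularization algorithm to the ideal sheaf $\II_X$ of $X$ in $M=\R^n$, or equivalently to the pair $(M,X)$ with empty initial exceptional divisor. Its key input is an invariant $\mathrm{inv}_X$: a sequence of Hilbert--Samuel functions or orders, interlaced with counts of exceptional divisors. This invariant takes values in a well-ordered set, is upper semicontinuous, and is infinitesimally determined. The centre of each blow-up $\pi_{j+1}$ is the maximum locus of $\mathrm{inv}_{X_j}$. Locally this locus is cut out, via a maximal contact hypersurface and the associated coefficient ideal, as a smooth subvariety that is transverse to (simultaneously normal crossings with) $E_j$. This is the inductive step on dimension. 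One then shows that the invariant drops strictly after each blow-up. Since the value set is well ordered and $M$ is quasi-compact in the Zariski sense, only finitely many blow-ups are needed to reach a stage where $\mathrm{inv}$ attains its minimal value on $X_j$, so the strict transform $\wt X$ is smooth.

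Second, we verify the remaining properties.
\begin{itemize}
\item The centres all lie over $\Sing X$, since $\mathrm{inv}$ is minimal exactly at regular points of the original $X$ and the algorithm never touches those. Hence $\pi$ is an isomorphism off $\pi^{-1}(\Sing X)$, and the preimage of $\Sing X$ is contained in the exceptional divisor.
\item The final $\wt X$ and the exceptional hypersurfaces have simultaneous normal crossings, because each centre was chosen transverse to $E_j$, and a final principalization step separates $\wt X$ from $E$ where necessary.
\item $\pi$ is proper, being a finite composition of blow-ups with closed centres.
\end{itemize}

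Third, we address the real field. The algorithm is functorial with respect to field extensions, being defined by the ideal alone. So applying it over $\mathbb{C}$ to $X_{\mathbb C}$ produces centres invariant under complex conjugation, and these are therefore defined over $\R$. Alternatively, the algorithm is simply run over $\R$, since it only uses characteristic zero. In either case the centres are smooth real algebraic subvarieties.

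The main obstacle is the globalization in the first step. Maximal contact hypersurfaces exist only locally and are not unique. One must prove that the maximum locus of the invariant, built from them, is independent of the local choices and hence glues to a global smooth algebraic centre. This is exactly the content of the invariance theorem in~\cite{BM} (respectively, homogenization in~\cite{W}), and we would quote it rather than reprove it.
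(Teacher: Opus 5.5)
Your proposal matches the paper, which gives no proof of its own and simply quotes this as Hironaka's theorem~\cite{H} (see also~\cite{BM}, \cite{W}). Deferring to the canonical Bierstone--Milman or W{\l}odarczyk algorithm, run over $\R$ on the ideal of all polynomials vanishing on $X$, is the standard justification, though your verification step has two minor slips. First, centres lying over $\Sing X$ only show that the exceptional divisor is contained in $\pi^{-1}(\Sing X)$. The reverse inclusion you assert, which is needed to identify $\pi^{-1}(\Sing X)$ with the exceptional divisor, comes instead from smoothness of $\wt X$, since $\pi$ is a local isomorphism over any point of $\Sing X$ off the exceptional locus. Second, $\mathrm{inv}$ is minimal at all regular points only when $X$ is equidimensional, so for general $X$ the fact that the centres avoid $\Reg X$ must be taken directly from the cited theorems rather than from minimality.
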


Since every Nash manifold is a closed semialgebraic subset in some $\R^N$, Theorem~\ref{thm:res-alg} implies the following Nash variant of resolution.

\begin{corollary}
\label{cor:res-Nash}
Let $X$ be a $k$-dimensional Nash set in a Nash manifold $M$. There exists a proper Nash mapping $\pi:\wt{M}\to M$ of Nash manifolds, which is a composition of a finite sequence~\eqref{eq:desing} of blow-ups with smooth Nash centers, such that $\pi$ is a Nash isomorphism outside the preimage of a Nash subset $Y\subset M$ of dimension $\dim{Y}<k$, the final strict transform $\wt{X}$ of $X$ is a $k$-dimensional Nash manifold, and $\wt{X}$ and $\pi^{-1}(Y)$ simultaneously have only normal crossings.
\end{corollary}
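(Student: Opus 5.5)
The plan is to deduce Corollary~\ref{cor:res-Nash} from Theorem~\ref{thm:res-alg} by passing to an algebraic model and then transporting the resolution back. By \cite[Thm.\,VI.2.1, Rem.\,VI.2.11]{Shiota}, $M$ is Nash isomorphic to a nonsingular algebraic set $M'\subset\R^N$, so we may assume from the start that $M$ itself is such an algebraic set. The set $X$ is Nash but usually not algebraic, so it cannot be fed directly into Theorem~\ref{thm:res-alg}. Let $Z=\Cl^\Zar(X)\subset\R^N$ be its Zariski closure. By Proposition~\ref{prop:alg-closure-dim}, $\dim Z=k$.

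The main step is to apply Theorem~\ref{thm:res-alg} simultaneously to $Z$ and to the divisor needed for normal crossings. Write $M=V(p_1,\dots,p_r)$. I would resolve the algebraic set $Z$ embedded in $\R^N$, which gives $\pi':\wt{\R^N}\to\R^N$, a composition of blow-ups with smooth algebraic centres lying over $\Sing Z$. I would then restrict everything to the Nash open subset of $M$ where $X$ and $Z$ agree.

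This localization is the real difficulty, because $X$ may be a union of only some of the Nash irreducible components of the germs of $Z$. The clean way around it, which I would adopt, is to work directly in $M$ with the Nash version of Hironaka's theorem. Blowing up Nash manifolds along smooth Nash centres stays in the Nash category. The algorithms of \cite{BM} and \cite{W} are functorial and produce centres defined by the local invariants of the ideal sheaf of $X$. These invariants are Nash (semialgebraic and analytic) whenever $X$ is Nash, and they take finitely many values because the data are semialgebraic. Hence the process stops after finitely many steps, the centres $C_j$ are smooth Nash subsets, and each $M_j$ is again a Nash manifold.

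Finally, the remaining properties come straight from the algorithm, with $Y$ taken to be the Nash singular locus $\Sing X$. First, $\pi$ is a Nash isomorphism off $\pi^{-1}(Y)$, since all centres lie over $Y$. Second, $\dim Y<k$: this holds because $\Reg X$ is dense in the top-dimensional part of $X$, and if $X$ has lower-dimensional pieces we add them to $Y$. Third, $\pi$ is proper, being a finite composition of blow-ups with closed centres. Fourth, the strict transform $\wt X$ is a $k$-dimensional Nash manifold and has simultaneous normal crossings with $\pi^{-1}(Y)$.

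The hardest point is certifying that the functorial algorithm stays semialgebraic and terminates globally, not just locally, on a noncompact $M$. Semialgebraicity is what gives finiteness here: the invariant is upper semicontinuous and semialgebraic, so it takes finitely many values, and the maximal-invariant loci are smooth Nash sets.
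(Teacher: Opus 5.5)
The weight of your argument rests on the step ``work directly in $M$ with the Nash version of Hironaka's theorem''. That step is the corollary itself, and the justification you give for it is asserted rather than proved. The resolution invariants of Bierstone--Milman or W{\l}odarczyk are not Nash functions. They are upper semicontinuous functions with values in a well-ordered set of sequences, built from Hilbert--Samuel functions, coefficient ideals and only \emph{locally} defined maximal-contact hypersurfaces. You need the invariant to be constant on the pieces of a finite semialgebraic partition of $M$; this is what gives global termination and Nash centres on a noncompact $M$. That is a genuine theorem, and ``the data are semialgebraic, so it takes finitely many values'' just restates it. There is also an issue with the input. The sheaf of Nash functions vanishing on $X$ need not be coherent (Cartan's umbrella). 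If you replace it by some coherent ideal with zero set $X$, the real strict transform can disappear: the ideal $(x^2+y^2)$ in $\R^3$ has zero set the $z$-axis, yet after blowing up the axis its strict transform has no real points. The standard way to certify all this is to go back to algebraic data, which is the route you abandoned.

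That route is the paper's. It obtains the corollary directly from Theorem~\ref{thm:res-alg}, using that $M$ is Nash isomorphic to a nonsingular algebraic set, so finiteness, properness and semialgebraicity of the centres are inherited from the algebraic theorem. The paper leaves implicit the point you correctly raised, that $X$ is not algebraic, but this is not an obstacle. You do not need an open set where $X$ and $Z$ agree; typically none exists, e.g.\ $X=Z_1$ in Example~\ref{ex:1reducible} at the node.

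Resolve $Z=\Cl^\Zar(X)$ inside the algebraic model of $M$ and put $Y=\Sing Z$. Then $\dim Y<k$ by Proposition~\ref{prop:alg-closure-dim}, $\pi$ is an isomorphism off $\pi^{-1}(Y)$, and $\wt Z$ and $\pi^{-1}(Y)$ have simultaneous normal crossings. Each connected component $W$ of $\wt Z$ is a connected Nash manifold. Hence the Nash closure of $\pi(W)$ is irreducible, and $\pi(W)$ lies in a single irreducible Nash component of $Z$. Conversely, every $k$-dimensional irreducible Nash component of $X$ is an irreducible Nash component of $Z$, since $\dim Z=k$. Consequently the closure of $\pi^{-1}(\Reg_k X\setminus Y)$ is a union of connected components of $\wt Z$, and this is the required $\wt X$.
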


\begin{remark}
\label{rem:exc-locus}
Given a Nash manifold $M$ and a finite composite $\pi=\pi_1\circ\dots\circ\pi_s:M_s\to M_0=M$ of blow-ups $\pi_{j+1}:M_{j+1}\to M_j$ with smooth Nash centers $C_j\subset M_j$, the set 
\[
E=C_0\cup\pi_1(C_1)\cup(\pi_1\circ\pi_2)(C_2)\cup\dots\cup(\pi_1\circ\dots\circ\pi_{s-1})(C_{s-1})
\]
is called the \emph{exceptional locus} of $\pi$. If $D$ denotes the exceptional divisor of $\pi$, then $D=\pi^{-1}(E)$ and $\pi$ is actually an isomorphism over $M\setminus E$. Note that, if $Y\subset M$ is the Nash set from Corollary~\ref{cor:res-Nash} then, in general, one only has $E\subset Y$ but the inclusion may be proper. Nonetheless, it is an easy exercise to show that $E$ is always $\AR$-closed in $M$.
\end{remark}

\vspace{2ex}
%%%%%%%%%%%%%%%%%%%%%%%%%%%%%%%%%%%%%%%%%%%%%%%%%%
%%%%%%%%%%%%%%%%% Section %%%%%%%%%%%%%%%%%%%%%%%%
%%%%%%%%%%%%%%%%%%%%%%%%%%%%%%%%%%%%%%%%%%%%%%%%%%

\section{Arc-analytic functions and $\AR$ topology on general semialgebraic sets}
\label{sec:S-AR-topology}

Let $S$ be an arbitrary semialgebraic set in $\R^n$.
We define arc-analytic functions on $S$ in an obvious fashion.

\begin{definition}
\label{def:arc-an-on-semialg}
A function $f:S\to\R$ is called \emph{arc-analytic}, if $f\circ\gamma$ is real analytic for every analytic arc $\gamma:(-1,1)\to S$. The $\R$-algebra of all semialgebraic arc-analytic functions on $S$ will be denoted by $\Aa(S)$.
\end{definition}

By Theorem~\ref{thm:ASey-arc-an-extension}, the algebra of arc-analytic functions on an arc-symmetric subset $S$ of a Nash manifold $M$ is completely determined by the algebra $\Aa(M)$. For a general semialgebraic set $S$ however, $\Aa(S)$ has a much richer structure, as implied by the following result.

\begin{theorem}
\label{thm:Aa-integral-equiv}
Let $S$ be a semialgebraic subset of a Nash manifold $M$. Let $\varrho:\Aa(M)\to\Aa(S)$ be the restriction homomorphism. The following conditions are equivalent:
\begin{itemize}
\item[(i)] $S$ is an arc-symmetric subset of $M$
\item[(ii)] $\varrho$ is surjective
\item[(iii)] $\varrho$ makes $\Aa(S)$ into a finite $\Aa(M)$-module
\item[(iv)] $\varrho$ makes $\Aa(S)$ into an integral $\Aa(M)$-algebra.
\end{itemize}
\end{theorem}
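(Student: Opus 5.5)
The implications (i)$\Rightarrow$(ii)$\Rightarrow$(iii)$\Rightarrow$(iv) should be formal, so the plan is to prove that chain first and spend most of the effort on (iv)$\Rightarrow$(i).

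For (i)$\Rightarrow$(ii): if $S$ is arc-symmetric in $M$, then $S$ is $\AR$-closed by Theorem~\ref{thm:AR-topology}. Theorem~\ref{thm:ASey-arc-an-extension} then says every $f\in\Aa(S)$ is the restriction of some $F\in\Aa(M)$, so $\varrho$ is surjective. For (ii)$\Rightarrow$(iii): surjectivity means $\Aa(S)$ is generated as an $\Aa(M)$-module by the single element $1$. For (iii)$\Rightarrow$(iv): a finite algebra is integral, by the standard determinant trick (Cayley--Hamilton) applied to multiplication by $f$ on a finite set of module generators.

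For (iv)$\Rightarrow$(i) I argue by contraposition: assuming $S$ is not arc-symmetric, I look for an $f\in\Aa(S)$ that is not integral over $\Aa(M)$. The basic obstruction is local boundedness. Elements of $\Aa(M)$ are continuous, since semialgebraic arc-analytic functions on a Nash manifold are continuous (\cite{Kur}). Suppose $f^m+a_1f^{m-1}+\dots+a_m=0$ on $S$ with $a_i\in\Aa(M)$. Then
\[
|f|\leq 1+\max_i|a_i|\quad\text{on } S,
\]
so $f$ is bounded on $S\cap K$ for every compact $K\subset M$.

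The first case is when $S$ is not closed in $M$. Pick $p\in\Cl_M(S)\setminus S$ and set $f(x)=1/\|x-p\|^2$. This $f$ is Nash on $M\setminus\{p\}$, hence semialgebraic and arc-analytic on $S$. It is unbounded near $p$, so it cannot be integral over $\Aa(M)$.

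The remaining case, $S$ closed in $M$ but not arc-symmetric, is the one I expect to be the main obstacle, because a single unbounded function is no longer available. I would first choose a witnessing arc via Definition~\ref{def:arc-symm}(iii): an injective analytic arc $\gamma$ and a parameter $t_1$ with $\gamma((-1,t_1])\subset S$ and $\gamma(t)\notin S$ for $t\in(t_1,t_1+\ve)$. Integrality then has a concrete consequence along $\gamma$. The coefficients $a_i\circ\gamma$ are analytic on all of $(-1,1)$, so the analytic function $f\circ\gamma|_{(-1,t_1)}$ is a root of a monic polynomial with analytic coefficients. Its continuation past $t_1$ is therefore governed by the continuous root branches of $P(\gamma(t),y)$, which are bounded near $t_1$.

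The plan is to use Nash stratification (Proposition~\ref{subordinate-strat}), applied to $S$ and to $\Cl^\AR_M(S)$, to build $f\in\Aa(S)$ that violates this constraint. Since $\Cl^\AR_M(S)\setminus S$ has interior relative to the top-dimensional strata through which $\gamma$ exits $S$, I would try $f=1/g$. Here $g\in\Aa(M)$ should be positive on $S$ and vanish along $\gamma((t_1,t_1+\ve))$; Theorem~\ref{thm:ASey-zero-set-thm} supplies such a $g$ once the zero set is taken to be a suitable $\AR$-closed set missing $S$. If such a $g$ cannot be found, the fallback is to compare $f$ with its would-be extension using the Identity Principle (Proposition~\ref{prop:ident-princ}) on the $\AR$-irreducible component containing $\gamma$, and derive a contradiction from the boundedness of the root branches near $\gamma(t_1)$.
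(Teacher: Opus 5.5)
Your chain (i)$\Rightarrow$(ii)$\Rightarrow$(iii)$\Rightarrow$(iv) is correct. So is your first case ($S$ not closed in $M$, using $|f|\leq1+\max_i|a_i|$ and $f=1/\|x-p\|^2$). However, the case where $S$ is closed in $M$ is left unproved, and the construction you propose there cannot exist. Suppose $g\in\Aa(M)$ vanishes on $\gamma((t_1,t_1+\ve))$. Then $g\circ\gamma$ is analytic and vanishes on an interval, so $g\circ\gamma\equiv0$ on $(-1,1)$ (equivalently, $\ZZ_M(g)$ is arc-symmetric). Hence $g$ vanishes on $\gamma((-1,t_1])\subset S$, contradicting $g>0$ on $S$.

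Your fallback aims at the wrong mechanism. When $S$ is closed in $M$, every $f\in\Aa(S)$ is continuous on $S$ (Proposition~\ref{prop:arcan-AR-continuous}). It is therefore bounded on the compact set $S\cap K$ for every compact $K\subset M$. So the necessary condition you derived from integrality is automatically satisfied, and nothing blows up near $\gamma(t_1)\in S$. In this case the obstruction cannot come from growth on $S$; it must come from analytic continuation to a point outside $S$.

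The missing idea is to clear denominators in the integral equation. By Definition~\ref{def:arc-symm}(ii), take an analytic arc $\gamma:(-1,1)\to M$ with $\gamma((-1,0))\subset S$ and a $t^*$ with $x_0=\gamma(t^*)\notin S$, and put $d^2_{x_0}(x)=\|x-x_0\|^2$. Then $f=1/d^2_{x_0}\in\Aa(S)$. Multiplying $f^m+a_1f^{m-1}+\dots+a_m=0$ by $d^{2m}_{x_0}$ gives
\[
\Phi=1+d^2_{x_0}a_1+d^4_{x_0}a_2+\dots+d^{2m}_{x_0}a_m\in\Aa(M),\qquad\Phi|_S=0,\qquad\Phi(x_0)=1\,.
\]
Now $\Phi\circ\gamma$ is analytic on $(-1,1)$ and vanishes on $(-1,0)$, so it vanishes identically, contradicting $\Phi(\gamma(t^*))=1$. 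This settles both of your cases at once, with no stratification and no appeal to Theorem~\ref{thm:ASey-zero-set-thm}.

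The paper uses the same $f$ and the same cleared identity, but propagates the vanishing differently. It takes $x_0$ in an $\AR$-irreducible component $S_1$ of $\Cl^\AR_M(S)$ with $S_1\not\subset S$. It gets $\dim(S\cap S_1)=\dim S_1$ from Lemma~\ref{lem:Noeth-decomp} and Proposition~\ref{prop:alg-closure-dim}, and then applies Kurdyka's Identity Principle (Proposition~\ref{prop:ident-princ}) on $S_1$. Your witnessing arc lets you replace that step with the one-variable identity theorem.
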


\begin{proof}
The implication (i)$\,\Rightarrow\,$(ii) follows from Theorem~\ref{thm:ASey-arc-an-extension}. Implications (ii)$\,\Rightarrow\,$(iii) and (iii)$\,\Rightarrow\,$(iv) are true for any ring homomorphism. We prove (iv)$\,\Rightarrow\,$(i) by contradiction. Suppose $S$ is not arc-symmetric but $\Aa(S)$ is integral over $\Aa(M)$. Let $\Cl^\AR_M(S)=S_1\cup\dots\cup S_r$ be the decomposition of the $\AR$-closure of $S$ in $M$ into $\AR$-irreducible components. Then, there exists a component, say $S_1$, such that $S_1\not\subset S$. Pick $x_0\in S_1\setminus S$. Let $d^2_{x_0}$ denote the squared distance from $x_0$ function. Then, $1/d^2_{x_0}$ is in $\Aa(S)$, and hence is integral over $\Aa(M)$. That is, there exist $n\geq1$ and $a_0,\dots,a_{n-1}\in\Aa(M)$ such that
\[
\left(\frac{1}{d^2_{x_0}}\right)^n+\left(\frac{1}{d^2_{x_0}}\right)^{n-1}\cdot a_{n-1}+\dots+\frac{1}{d^2_{x_0}}\cdot a_1+a_0=0
\]
on $S$. Hence,
\[
1+d^2_{x_0}\cdot a_{n-1}+\dots+d^{2(n-1)}_{x_0}\cdot a_1+d^{2n}_{x_0}\cdot a_0=0
\]
on $S$. Observe that the left hand side of the above is semialgebraic and arc-analytic on $M$ and, by Proposition~\ref{prop:alg-closure-dim} and Lemma~\ref{lem:Noeth-decomp} below, $\dim(S\cap S_1)=\dim{S_1}$. Thus, by the Identity Principle (Proposition~\ref{prop:ident-princ}), $1+d^2_{x_0}\cdot a_{n-1}+\dots+d^{2(n-1)}_{x_0}\cdot a_1+d^{2n}_{x_0}\cdot a_0=0$ on all of $S_1$. Evaluating this equation at $x_0$ yields $1=0$; a contradiction.
\end{proof}

\begin{lemma}
\label{lem:Noeth-decomp}
Let $S$ be a nonempty set in a Noetherian topological space $X$. Let $\Cl_X(S)=S_1\cup\dots\cup S_t$ be the (unique non-redundant) decomposition of its closure into irreducible closed sets. Then, $\Cl_X(S\cap S_i)=S_i$ for all $i=1,\dots,t$.
\end{lemma}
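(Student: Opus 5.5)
The argument is purely topological and uses only the definition of irreducibility together with the uniqueness and non-redundancy of the decomposition of $\Cl_X(S)$. Fix $i\in\{1,\dots,t\}$. The inclusion $\Cl_X(S\cap S_i)\subset S_i$ is immediate, since $S_i$ is closed and contains $S\cap S_i$. For the reverse inclusion, I would exhibit $\Cl_X(S)$ as a union of closed sets, one of which is $\Cl_X(S\cap S_i)$, and then use irreducibility of $S_i$ to force the required containment.

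Because $S\subset \Cl_X(S)=S_1\cup\dots\cup S_t$, we have
\[
S=\bigcup_{j=1}^t (S\cap S_j),
\]
and taking closures (closure commutes with finite unions) gives
\[
\Cl_X(S)=\bigcup_{j=1}^t \Cl_X(S\cap S_j).
\]
Intersecting with $S_i$ yields
\[
S_i=\bigcup_{j=1}^t\bigl(S_i\cap\Cl_X(S\cap S_j)\bigr),
\]
a finite union of closed subsets of $S_i$. By irreducibility of $S_i$, one of them must be all of $S_i$. That is, there is a $j$ with $S_i\subset \Cl_X(S\cap S_j)\subset S_j$.

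By non-redundancy of the decomposition (no $S_i$ is contained in a distinct $S_j$), this forces $j=i$. Hence $S_i\subset\Cl_X(S\cap S_i)$, which gives equality.

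The only step requiring any care is the last one, namely invoking non-redundancy to conclude $j=i$; everything else is formal. Noetherianity is used only to guarantee the existence of the finite irredundant decomposition. Nonemptiness of $S$ ensures $t\ge1$, and each $S_i$ is nonempty, which is implicit in irreducibility.
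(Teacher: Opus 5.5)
Your proof is correct and is essentially the paper's argument. The paper writes $\Cl_X(S)=\Cl_X(S\cap S_1)\cup S_2\cup\dots\cup S_t$ and appeals to uniqueness of the irredundant decomposition, whereas you write $\Cl_X(S)=\bigcup_j\Cl_X(S\cap S_j)$ and then use irreducibility of $S_i$ together with non-redundancy directly; this makes the final step explicit, since $\Cl_X(S\cap S_1)$ is not known to be irreducible a priori and the paper's appeal to uniqueness really amounts to the irreducibility argument you give.
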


\begin{proof}
Without loss of generality suppose that $i=1$. Clearly, $\Cl_X(S\cap S_1)\subset S_1$, and hence $\Cl_X(S\cap S_1)\cup S_2\cup\dots\cup S_t\subset\Cl_X(S)$. In fact, since the left hand side contains $S$, we have equality. Uniqueness of a non-redundant decomposition into irreducible closed sets now implies that $S_1\subset\Cl_X(S\cap S_1)$.
\end{proof}
\medskip

Next, we define arc-symmetric subsets of an arbitrary semialgebraic set $S\subset\R^n$, as follows.

\begin{definition}
\label{def:arc-symm-in-semialg}
A subset $E\subset S$ is called \emph{arc-symmetric in $S$}, if it satisfies one of the following equivalent conditions:
\begin{itemize}
\item[(i)] For every analytic arc $\gamma:(-1,1)\to S$, $\Int(\gamma^{-1}(E))\neq\varnothing$ implies $\gamma^{-1}(E)=(-1,1)$
\item[(ii)] For every analytic arc $\gamma:(-1,1)\to S$, $\gamma((-1,0))\subset E$ implies $\gamma((-1,1))\subset E$
\item[(iii)] For every injective analytic arc $\gamma:(-1,1)\to S$, $\gamma((-1,0))\subset E$ implies $\gamma((-1,1))\subset E$.
\end{itemize}
\end{definition}

A key part in Kurdyka's theory recalled in Section~\ref{sec:prelim} is played by Noetherianity of the $\AR$-topology on a Nash manifold.
An analogous property is actually enjoyed by all semialgebraic sets, since in essence it relies only on basic topological properties of o-minimal structures (cf.~\cite{vDD}).

\begin{theorem}
\label{thm:general-AR-top}
Let $S\subset\R^n$ be a semialgebraic set. There is a Noetherian topology on $S$, whose closed sets coincide with the semialgebraic arc-symmetric subsets of $S$.
\end{theorem}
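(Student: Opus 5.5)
We show three things: the family of semialgebraic arc-symmetric subsets of $S$ contains $\varnothing$ and $S$, is closed under finite unions, and satisfies the descending chain condition. Once the chain condition holds, closure under arbitrary intersections follows. Indeed, by the chain condition any family of closed sets, and hence the family of finite subintersections of the given family, has a minimal element. That minimal finite intersection must equal the full intersection. A finite intersection is semialgebraic, and it is arc-symmetric because the defining condition (ii) of Definition~\ref{def:arc-symm-in-semialg} is clearly preserved under arbitrary intersections.

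For finite unions, let $E_1,E_2$ be arc-symmetric in $S$ and let $\gamma:(-1,1)\to S$ be analytic with $\Int(\gamma^{-1}(E_1\cup E_2))\neq\varnothing$. The set $\gamma^{-1}(E_1\cup E_2)$ is semialgebraic in the parameter, since the graph of $\gamma$ restricted to a compact subinterval can be handled as a subanalytic set; to stay in the o-minimal framework we may work with $\gamma^{-1}(E_i)$ as subanalytic subsets of $(-1,1)$, which are finite unions of points and intervals. A set with nonempty interior that is a union of two such sets has one of them, say $\gamma^{-1}(E_1)$, with nonempty interior. Then $\gamma^{-1}(E_1)=(-1,1)$, and we use version (i) of the definition. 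The cases of $\varnothing$ and $S$ are trivial.

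The main step is the descending chain condition, which we adapt from Kurdyka's Lemmas~\ref{lem:dim-drop} and~\ref{lem:fin-intersection}. First we prove an analogue of Lemma~\ref{lem:dim-drop}: if $\G\subset S$ is a connected Nash submanifold and $E$ is arc-symmetric in $S$ with $\G\not\subset E$, then $\dim(\G\cap E)<\dim\G$. Suppose instead that $\dim(\G\cap E)=\dim\G$. Then $\G\cap E$ has nonempty interior in $\G$, and, being semialgebraic, it has a boundary point $p$ in $\G$ that is approached from an open piece $U\subset\G\cap E$ and also lies in the closure of $\G\setminus E$; this uses the connectedness of $\G$ and $\G\not\subset E$. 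Next we choose an analytic arc inside $\G$, a line segment in local Nash coordinates at a generic such boundary point, that runs inside $U$ for negative times and leaves $E$ for positive times. Because the arc lies in $\G\subset S$, the arc-symmetry of $E$ in $S$ forces the whole arc into $E$, a contradiction. The genericity requires care: we take $p$ in the relative interior of the codimension-one part of the boundary of $\G\cap\Int_\G(\G\cap E)$, where locally $\G\setminus E$ contains a side of a hypersurface up to a lower-dimensional set. The arc must then be chosen to avoid that lower-dimensional set, which we arrange by dimension counting, since a generic transverse segment meets it in only finitely many points.

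With the analogue of Lemma~\ref{lem:dim-drop} in hand, take a descending chain $F_1\supset F_2\supset\cdots$ of closed sets. Fix a Nash stratification of $\R^n$ compatible with $S$ and $F_1$ (Proposition~\ref{subordinate-strat}), and induct on the finite multiset of pairs (stratum dimension, number of strata). Whenever some $F_k$ fails to contain a stratum $\G$ that was contained in $F_1$, the intersection $\G\cap F_k$ drops in dimension. We then restratify $F_k$, refining the stratification, and the complexity measure (the list of dimensions of top strata, ordered lexicographically) strictly decreases. Stabilisation follows by well-foundedness. To make this rigorous we use the invariant $(\dim F_k,$ number of connected components of the top-dimensional smooth part$)$. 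A strict inclusion $F_{k+1}\subsetneq F_k$ either removes some top-dimensional component entirely or, by the analogue of Lemma~\ref{lem:dim-drop}, does not change this part. In the latter case the difference must show up in lower dimensions, and we recurse on the semialgebraic sets $F_k\setminus\Reg_{\dim}F_k$ in a lexicographic induction. I expect the dimension-drop lemma, with its genericity of the arc, to be the main obstacle.
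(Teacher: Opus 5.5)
Your handling of $\varnothing$, $S$, finite unions and arbitrary intersections (via the chain condition) is fine. One small correction: $\gamma^{-1}(E_i)$ is not semialgebraic in general, since $\gamma$ is only analytic. But on every compact subinterval of $(-1,1)$ it is a finite union of points and intervals, and that is all you use.

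\textbf{The gap is in the descending chain condition}, which you rightly single out as the main step.

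\emph{The invariant does not behave as claimed.} You call $(\dim F_k$, number of components of $\Reg_{\dim}F_k)$ rigorous, but a strict inclusion that removes no top-dimensional component can still change the top-dimensional smooth part. Take $P\subset\R^3$ a plane and $\ell$ a line meeting it transversally at $q$, with $F_k=P\cup\ell$ and $F_{k+1}=P$. Then $\Reg_2F_k=P\setminus\{q\}$ grows to $\Reg_2F_{k+1}=P$, while your invariant stays $(2,1)$.

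\emph{The recursion is unjustified.} You recurse on the sets $F_k\setminus\Reg_{\dim}F_k$, but these need not be arc-symmetric in $S$. For $S=\{y\geq0\}\cup\{x=0,\,y\leq0\}\subset\R^2$ and $F=S$, the set $F\setminus\Reg_2F=\{y=0\}\cup\{x=0,\,y<0\}$ is not arc-symmetric in $S$, as the arc $t\mapsto(0,t)$ shows. So neither your dimension-drop lemma nor an inductive hypothesis about chains of arc-symmetric sets applies at the lower levels. You also give no argument for how changes at one level interact with the levels below.

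\emph{The heuristic multiset version has the same weakness as written.} If you ``restratify $F_k$, refining the stratification'', surviving top-dimensional strata may be split, and the measure need not drop.

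\textbf{The missing observation} is the one the paper uses. If $\G\subset S$ is a connected Nash submanifold and $E$ is arc-symmetric in $S$, then $E\cap\G$ is arc-symmetric in the Nash manifold $\G$ in Kurdyka's sense, because every analytic arc in $\G$ is an arc in $S$. This has two consequences.
\begin{enumerate}
\item[(a)] Your dimension-drop lemma follows at once from Lemma~\ref{lem:dim-drop} with $M=\G$. Your genericity argument becomes unnecessary; it would anyway need separate treatment when $\Int_\G(\G\cap E)$ is dense in $\G$.
\item[(b)] The chain condition follows directly. Fix one stratification $S=S_1\cup\dots\cup S_r$ into connected Nash submanifolds. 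Each chain $(F_k\cap S_j)_k$ consists of $\AR$-closed subsets of $S_j$, so it stabilizes by Theorem~\ref{thm:AR-topology}, or by Lemma~\ref{lem:fin-intersection} as in Lemma~\ref{lem:general-finite-intersection}. Since there are finitely many strata, $(F_k)_k$ stabilizes.
\end{enumerate}

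If you prefer to keep your self-contained route, here is a repair. Leave untouched the strata contained in $F_{k+1}$, and re-partition only $\G\cap F_{k+1}$, for those strata $\G\not\subset F_{k+1}$, into connected Nash submanifolds of dimension $<\dim\G$. Then every strict step $F_{k+1}\subsetneq F_k$ removes at least one element of the multiset of stratum dimensions and adds only strictly smaller ones. So the multiset strictly decreases in the well-founded multiset order, and the chain must stabilize.
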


We shall prove Theorem~\ref{thm:general-AR-top} in the same way as~\cite[Thm.\,1.4]{Kur}. To this end, we will need the following two lemmas.

\begin{lemma}
\label{lem:AR-closed-is-closed}
Let $S\subset\R^n$ be a semialgebraic set, and let $E\subset S$ be a semialgebraic subset which is arc-symmetric in $S$.
Then, $E$ is closed in the Euclidean topology on $S$.
\end{lemma}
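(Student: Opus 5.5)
We argue by contradiction, using the Curve Selection Lemma to produce an arc in $S$ that leaves $E$ immediately after touching its boundary. Suppose $E$ is not closed in $S$, so there is a point $a\in S\cap\Cl(E)$ with $a\notin E$. Since $E$ is semialgebraic and $a\in\Cl_{\R^n}(E)$, Proposition~\ref{prop:Nash-curve-sel} applied in $M=\R^n$ gives a Nash arc $\vp:(-1,1)\to\R^n$ with $\vp(0)=a$ and $\vp((0,1))\subset E$.

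The difficulty is that $\vp$ need not map into $S$ for negative parameters, so condition (ii) of Definition~\ref{def:arc-symm-in-semialg} cannot be applied to $\vp$ directly. We fix this by reparametrizing so that only the half-arc $\vp([0,1))\subset E\cup\{a\}\subset S$ is used. Set
\[
\gamma(t)=\vp(t^2),\qquad t\in(-1,1).
\]
This is an analytic arc, and its image lies in $\vp([0,1))\subset S$, so $\gamma:(-1,1)\to S$ is an analytic arc into $S$. Moreover $\gamma((-1,0))=\vp((0,1))\subset E$.

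By condition (ii) of Definition~\ref{def:arc-symm-in-semialg}, it follows that $\gamma((-1,1))\subset E$. In particular $a=\gamma(0)\in E$, which is a contradiction. Equivalently, in terms of condition (i), $\gamma^{-1}(E)\supset(-1,1)\setminus\{0\}$ has nonempty interior, so $\gamma^{-1}(E)=(-1,1)$ and again $0\in\gamma^{-1}(E)$.

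The only point needing care is the one addressed above: the arc produced by curve selection lives in the ambient space rather than in $S$. The squaring reparametrization resolves this, since it folds the arc onto the portion already lying in $E\cup\{a\}\subset S$. No local closedness or other hypothesis on $S$ is needed.
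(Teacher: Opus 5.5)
Your proposal is correct and follows essentially the same argument as the paper: apply curve selection in $\R^n$ at a point of $\Cl_S(E)$, then compose with $t\mapsto t^2$ so the arc lands in $S$ with one half inside $E$, and invoke arc-symmetry. Your contradiction framing is only cosmetic; the paper argues directly that an arbitrary $p\in\Cl_S(E)$ lies in $E$.
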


\begin{proof}
Let $S$ and $E$ be as above, and let $p\in\Cl_S(E)$ be arbitrary. By the Curve Selection Lemma (Proposition~\ref{prop:Nash-curve-sel}), there is a Nash mapping $\gamma:(-1,1)\to\R^n$ such that $\gamma(0)=p$ and $\gamma((0,1))\subset E$. We may assume that $\gamma((-1,1))\subset S$, by composing $\gamma$ with $\{t\mapsto t^2\}$ if needed. Then, by arc-symmetry of $E$, $\gamma((-1,1))\subset E$, and hence $p=\gamma(0)\in E$.
\end{proof}

\begin{lemma}
\label{lem:general-finite-intersection}
Let $S\subset\R^n$ be a semialgebraic set, and let $\{E_i\}_{i\in I}$ be an arbitrary family of semialgebraic arc-symmetric subsets of $S$. There exists a finite index subset $I_0=\{i_1,\dots,i_t\}\subset I$ such that
\[
\bigcap_{i\in I}E_i\;=\;E_{i_1}\cap\dots\cap E_{i_t}\,.
\]
\end{lemma}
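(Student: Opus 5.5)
The plan is to mimic Kurdyka's proof of Lemma~\ref{lem:fin-intersection}, replacing the connected smooth set $\G$ by the strata of a Nash stratification of $S$, and inducting on dimension. The basic tool will be an analogue of Lemma~\ref{lem:dim-drop} inside $S$. Let $\G\subset S$ be a connected Nash submanifold, and let $E$ be a semialgebraic set that is arc-symmetric in $S$. We claim that
\[
\G\not\subset E\ \Longrightarrow\ \dim(\G\cap E)<\dim\G\,.
\]
To prove the claim, suppose $\dim(\G\cap E)=\dim\G$. Then $\G\cap E$ has nonempty interior in $\G$. By Lemma~\ref{lem:AR-closed-is-closed}, $E$ is closed in $S$, so $\G\cap E$ is closed in $\G$. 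It therefore suffices to show that $\G\cap E$ is also open in $\G$; connectedness of $\G$ then forces $\G\subset E$.

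For openness, let $p$ lie on the boundary in $\G$ of the interior $U$ of $\G\cap E$. Choose a Nash chart of $\G$ around $p$, and in it a point $q\in U$ close to $p$ such that the straight segment in the chart from $q$ through $p$ stays inside $\G$. Parametrize this segment as an analytic arc $\gamma$ into $\G\subset S$. Its initial part lies in $U\subset E$, so $\Int(\gamma^{-1}(E))\neq\varnothing$, and arc-symmetry in $S$ (Definition~\ref{def:arc-symm-in-semialg}(i)) gives $\gamma^{-1}(E)=(-1,1)$. This argument alone only yields $p\in E$, which closedness already provides. To get openness I will use all segments from points of $U$: every point of a small convex chart ball $B$ around $p$ is joined to a point of $U\cap B$ by a segment in $B$, so $B\subset E$. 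Hence the interior of $\G\cap E$ is closed in $\G$, and the claim follows.

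Now fix a Nash stratification $\{S_\alpha\}$ of $S$ (Proposition~\ref{subordinate-strat}). For each stratum $S_\alpha$ we construct finitely many indices $i_1,\dots,i_s$ with $S_\alpha\cap\bigcap_{i\in I}E_i=S_\alpha\cap E_{i_1}\cap\dots\cap E_{i_s}$; taking the union of these finite index sets over all strata then gives the lemma. The construction inducts on $d=\dim\G$, and we prove the more general statement for $\G$ any semialgebraic subset of $S$, using dimension induction on semialgebraic sets.

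The inductive step goes as follows. If $\G\subset E_i$ for all $i$, there is nothing to do. Otherwise, stratify $\G$ into connected Nash submanifolds and treat each top-dimensional piece $\G'$. Either $\G'\subset E_i$ for every $i$, or there is some $i_1$ with $\G'\not\subset E_{i_1}$, in which case $\dim(\G'\cap E_{i_1})<\dim\G'$ by the claim. Apply induction to the lower-dimensional semialgebraic set $\G'\cap E_{i_1}$, and likewise to the lower-dimensional strata of $\G$. Since
\[
\G'\cap\bigcap_{i\in I}E_i=(\G'\cap E_{i_1})\cap\bigcap_{i\in I}E_i\,,
\]
finitely many indices suffice. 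The recursion terminates because the dimension drops at each step, and the statement is trivial in dimension $0$ for finite sets; if $\dim\G=0$, then $\G$ is finite and we simply pick, for each point not in the intersection, one $E_i$ missing it.

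The main obstacle is the openness step in the claim, that is, making sure the analytic arcs used lie inside $S$ (they lie in the stratum $\G\subset S$, so this works) and that arc-symmetry only relative to $S$ suffices. This is why the claim is proved for connected Nash submanifolds of $S$ rather than for the whole of $S$.
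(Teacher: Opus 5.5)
Your proof is correct, but it takes a more self-contained route than the paper's. The paper also begins with a stratification $S=S_1\cup\dots\cup S_r$. It then notes that each $E_i\cap S_j$ is arc-symmetric in the Nash manifold $S_j$, since an analytic arc into $S_j$ is an arc into $S$. It applies Kurdyka's Lemma~\ref{lem:fin-intersection} as a black box with $\G=M=S_j$ and takes the union of the resulting finite index sets.

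You instead unpack that black box. First you prove a relative version of Lemma~\ref{lem:dim-drop} for connected Nash submanifolds $\G\subset S$, using chart segments. Then you rerun the dimension induction over semialgebraic subsets of $S$ that underlies Kurdyka's own proof.

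The paper's argument is two lines but relies on the Nash-manifold version of Kurdyka's lemma. Yours uses only local charts, arc-symmetry relative to $S$, and semialgebraic dimension theory. It thereby makes explicit the paper's remark that the result rests only on basic o-minimal topology.

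A few small points. Your dimension-drop claim also follows directly from Lemma~\ref{lem:dim-drop} applied with $M=\G$ to $E\cap\G$, which is arc-symmetric in $\G$.

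In the openness step, you say it suffices to show $\G\cap E$ is open, but your argument actually shows something else: the interior $U$ of $\G\cap E$ is closed in $\G$. Hence $U=\G$ by connectedness. That conclusion is what you need, and it does not use the closedness of $E$ from Lemma~\ref{lem:AR-closed-is-closed}.

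Finally, the initial stratification of $S$ is redundant. Once you prove the general statement for an arbitrary semialgebraic $\G\subset S$, you can simply apply it with $\G=S$.
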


\begin{proof}
Let $S=S_1\cup\dots\cup S_r$ be a semialgebraic stratification of $S$. For every $j=1,\dots,r$, by Lemma~\ref{lem:fin-intersection} applied to $\G=M=S_j$ and the family $\{E_i\cap S_j\}_{i\in I}$ of arc-symmetric subsets of $S_j$, there is a finite index set $I_j\subset I$ such that
\[
S_j\cap\bigcap_{i\in I}E_i\;=\;S_j\cap\bigcap_{i\in I_j}E_i\,.
\]
Then, $I_0=I_1\cup\dots\cup I_r$ has the required property.
\end{proof}

\begin{proof}[Proof of Theorem~\ref{thm:general-AR-top}]
Let $\mathcal{F}$ be the collection of semialgebraic arc-symmetric subsets of $S$. Clearly, both $\varnothing$ and $S$ belong to $\mathcal{F}$.
Given $E_1,\dots,E_t\in\mathcal{F}$, to show that $E_1\cup\dots\cup E_t\in\mathcal{F}$, it suffices to consider the case when $t=2$. Let $\gamma:(-1,1)\to S$ be an analytic arc with $\Int(\gamma^{-1}(E_1\cup E_2))\neq\varnothing$. By Lemma~\ref{lem:AR-closed-is-closed} and continuity of $\gamma$, both $\gamma^{-1}(E_1)$ and $\gamma^{-1}(E_2)$ are closed subsets of $(-1,1)$. Hence, $\Int(\gamma^{-1}(E_i))\neq\varnothing$ for some $i=1,2$, for otherwise
\[
\Int(\gamma^{-1}(E_1)\cup\gamma^{-1}(E_2))=\Int(\gamma^{-1}(E_1\cup E_2))\neq\varnothing\,,
\]
contradicting the Baire Category Theorem. Say, $\Int(\gamma^{-1}(E_1))\neq\varnothing$. Then, by arc-symmetry of $E_1$, $\gamma((-1,1))\subset E_1\subset E_1\cup E_2$. Hence, $E_1\cup E_2\in\mathcal{F}$.

Finally, let $\{E_i\}_{i\in I}\subset\mathcal{F}$ be arbitrary. To show that $\bigcap_i E_i\in\mathcal{F}$, by Lemma~\ref{lem:general-finite-intersection} we may assume that $I$ is finite, say, $I=\{i_1,\dots,i_t\}$. Let $\gamma:(-1,1)\to S$ be an analytic arc with $\Int(\gamma^{-1}(E_{i_1}\cap\dots\cap E_{i_t}))\neq\varnothing$. Then,
\[
\Int(\gamma^{-1}(E_{i_1}))\cap\dots\cap\Int(\gamma^{-1}(E_{i_t}))=\Int(\gamma^{-1}(E_{i_1})\cap\dots\cap\gamma^{-1}(E_{i_t}))=\Int(\gamma^{-1}(E_{i_1}\cap\dots\cap E_{i_t}))\neq\varnothing\,.
\]
Hence, by arc-symmetry of each $E_{i_j}$, $\gamma((-1,1))\subset E_{i_j}$ for all $j$. Thus, $\bigcap_{i\in I}E_i=E_{i_1}\cap\dots\cap E_{i_t}\in\mathcal{F}$.
Noetherianity of this topology follows again from Lemma~\ref{lem:general-finite-intersection}, since every descending chain of sets in $\mathcal{F}$ stabilizes.
\end{proof}

\begin{definition}
\label{def:S-AR-top}
For a semialgebraic set $S$, we shall call the above topology on $S$ the \emph{$S$-$\AR$ topology}, and its closed sets the \emph{$S$-$\AR$-closed} subsets of $S$.
\end{definition}

\begin{remark}
\label{rem:AR-not-induced}
In general, the $S$-$\AR$ topology on a semialgebraic set $S\subset\R^n$ is strictly finer than the induced $\AR$-subspace topology from $\R^n$ (or from any Nash manifold $M$ containing $S$). Indeed, a trivial example which demonstrates this fact is $S=\R\setminus\{0\}$ as a subset of $M=\R$. The only $\AR$-closed subsets of $\R$ are finite sets and $\R$ itself. Hence, the closed subsets of the $\AR$-subspace topology on $S$ are exactly the finite subsets of $S$ and $S$ itself. However, as is easy to see, both $(-\infty,0)$ and $(0,\infty)$ are $S$-$\AR$-closed. See Examples~\ref{ex:1reducible} and~\ref{ex:2reducible} below for much less obvious (and surprising) specimens.
\end{remark}

By Noetherianity of the $S$-$\AR$ topology, for every subset $E\subset S$ there exists a unique smallest $S$-$\AR$-closed set containing $E$, called its \emph{$S$-$\AR$-closure}, which we denote by $\Cl^\AR_S(E)$.
A nonempty $S$-$\AR$-closed set $F$ is called \emph{$S$-$\AR$-irreducible}, if $F$ is not a union of two nonempty proper $S$-$\AR$-closed subsets.
As in any Noetherian topological space, for every nonempty $S$-$\AR$-closed set $F$ there is a unique finite collection of $S$-$\AR$-irreducible sets $F_1,\dots,F_s$, called the \emph{$S$-$\AR$-irreducible components} of $F$, such that $F=F_1\cup\dots\cup F_s$ and, for all $j=1,\dots,s$,
\[
F_j\not\subset\bigcup_{k\neq j}F_k\,.
\]
\bigskip

Kurdyka's \cite[Prop.\,5.1]{Kur} easily generalizes to our setting. We include the proof below for completeness.

\begin{proposition}[cf.\,{\cite[Prop.\,5.1]{Kur}}]
\label{prop:arcan-AR-continuous}
Let $S\subset\R^n$, and let $f:S\to\R^m$ be a semialgebraic arc-analytic function. Then:
\begin{itemize}
\item[(i)] The graph $\G_f$ of $f$ is an arc-symmetric subset of $S\times\R^m$
\item[(ii)] $f$ is continuous in the $S$-$\AR$-topology
\item[(iii)] $f$ is continuous in the Euclidean topology.
\end{itemize}
\end{proposition}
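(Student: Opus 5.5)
We prove (i) first and derive (ii) and (iii) from it. For (i), note that $\G_f$ is a semialgebraic subset of $S\times\R^m$, since $f$ is semialgebraic. We verify condition (ii) of Definition~\ref{def:arc-symm-in-semialg}. Let $\eta=(\gamma,\delta):(-1,1)\to S\times\R^m$ be an analytic arc with $\eta((-1,0))\subset\G_f$, that is, $\delta(t)=f(\gamma(t))$ for $t<0$. Here $\gamma:(-1,1)\to S$ is an analytic arc into $S$, so $f\circ\gamma$ is real analytic on $(-1,1)$ by arc-analyticity of $f$. The two analytic maps $\delta$ and $f\circ\gamma$ agree on $(-1,0)$, so by the identity theorem for real analytic functions of one variable they agree on $(-1,1)$. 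Hence $\eta((-1,1))\subset\G_f$, and $\G_f$ is arc-symmetric in $S\times\R^m$.

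For (ii), let $F\subset\R^m$ be $\AR$-closed, i.e.\ a semialgebraic arc-symmetric subset of the Nash manifold $\R^m$. We must show that $f^{-1}(F)$ is $S$-$\AR$-closed. It is semialgebraic by Tarski--Seidenberg. Take an analytic arc $\gamma:(-1,1)\to S$ with $\gamma((-1,0))\subset f^{-1}(F)$. Then $f\circ\gamma:(-1,1)\to\R^m$ is an analytic arc with $(f\circ\gamma)((-1,0))\subset F$. By arc-symmetry of $F$ in $\R^m$ (Definition~\ref{def:arc-symm}(ii)), $f\circ\gamma((-1,1))\subset F$, i.e.\ $\gamma((-1,1))\subset f^{-1}(F)$. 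So preimages of closed sets are closed. (Here the $\AR$ topology on the target is Kurdyka's topology on $\R^m$, which coincides with the $\R^m$-$\AR$ topology.)

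For (iii), we argue by the Curve Selection Lemma, as in Lemma~\ref{lem:AR-closed-is-closed}. First we show that $\G_f$ is closed in $S\times\R^m$: it is arc-symmetric by (i), hence Euclidean closed in $S\times\R^m$ by Lemma~\ref{lem:AR-closed-is-closed}.

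Closedness of the graph alone does not give continuity, so we also need local boundedness. Suppose $f$ is not continuous at $p\in S$. Then the semialgebraic set
\[
A=\{x\in S:\|f(x)-f(p)\|\geq\ve\}
\]
has $p$ in its closure for some $\ve>0$. Curve selection gives a Nash arc $\gamma$ with $\gamma(0)=p$ and $\gamma((0,1))\subset A$. Reparametrizing by $t\mapsto t^2$, we may assume $\gamma((-1,1))\subset S$, with $\gamma(t)\in A$ for $t\neq0$. Then $f\circ\gamma$ is analytic, hence continuous at $0$, so $f(\gamma(t))\to f(p)$ as $t\to0$. This contradicts $\|f(\gamma(t))-f(p)\|\geq\ve$ for $t\neq0$.

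This argument in fact makes the closed-graph step unnecessary. The only delicate point is ensuring that the selected arc lies in $S$ on the whole interval, which the $t^2$ trick handles exactly as in the proof of Lemma~\ref{lem:AR-closed-is-closed}.
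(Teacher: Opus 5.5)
Your proof is correct and matches the paper's argument for (i) and (ii) essentially verbatim. For (iii), the paper runs the same curve-selection argument only with threshold $1$, to get local boundedness, and then combines that with the closed graph from (i). You run it with arbitrary $\ve>0$ and get continuity directly, and you also make explicit the $t\mapsto t^2$ reparametrization that keeps the arc inside $S$, which the paper leaves implicit. So your remark that the closed-graph step is redundant is right.
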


\begin{proof}
For (i), let $\gamma:(-1,1)\to S\times\R^m$ be an analytic arc and suppose $\gamma((-1,0))\subset\G_f$. Write $\gamma=(\gamma_1,\gamma_2)$. Then, for all $t\in(-1,0)$, $\gamma_2(t)=f\circ\gamma_1(t)$. By the Identity Principle for analytic functions, it follows that $\gamma_2\equiv f\circ\gamma_1$. Therefore, $\gamma((-1,1))\subset\G_f$.

For (ii), let $E\subset\R^m$ be $\AR$-closed. Semialgebraicity of $f^{-1}(E)$ follows from the Tarski-Seidenberg theorem. Arc-symmetry of $f^{-1}(E)$ is obvious.

As for Euclidean-continuity of $f$, (i) shows that the graph of $f$ is closed in $S\times\R^m$. We also claim that $f$ is locally bounded. Suppose $f$ were not locally bounded, say at $x_0\in S$. Then, there is a sequence $(x_n)_{n=1}^\infty$ in $S$ such that $\lim_{n\to\infty}x_n=x_0$ and $\mathrm{dist}(f(x_n),f(x_0))>1$ for every $n\geq1$. The set $A=\{x\in S:\mathrm{dist}(f(x),f(x_0))>1\}$ is a semialgebraic subset of $S$, and by the above $x_0\in\Cl_S(A)$. Hence, by the Curve Selection Lemma, there is a Nash mapping $\gamma:(-1,1)\to S$ such that $\gamma((-1,0))\subset A$ and $\gamma(0)=x_0$. As $f\circ\gamma$ is continuous, we have $\lim_{t\to0^-}f\circ\gamma(t)=f(x_0)$, which is impossible since $\mathrm{dist}(f(\gamma(t)),f(x_0))>1$ for all $t<0$. Therefore, $f$ is locally bounded, and having a closed graph, $f$ is continuous, proving (iii).
\end{proof}
\smallskip

Next, we isolate two properties that will play a central role in our study of rings of arc-analytic functions on general semialgebraic sets.

\begin{definition}
\label{def:zero-set-arcan-ext}
Let $S\subset\R^n$ be a semialgebraic set. We say that:
\begin{itemize}
\item[(i)] $S$ has the \emph{zero-set property} (or, \emph{ZSP}, for short), if for every $S$-$\AR$-closed set $E\subset S$, there is a function $f\in\Aa(S)$ such that $f^{-1}(0)=E$.
\item[(ii)] $S$ has the \emph{arc-analytic extension property} (or, \emph{AEP}, for short), if for every $S$-$\AR$-closed set $E\subset S$ and for every function $f\in\Aa(E)$, there exists a function $F\in\Aa(S)$ such that $F|_E=f$.
\end{itemize}
\end{definition}

\begin{remark}
\label{rem:zero-set-arcan-ext}
\begin{itemize}
\item[(i)] From Proposition~\ref{prop:arcan-AR-continuous}(ii), it follows that for a semialgebraic set $S$ and an arc-analytic function $f\in\Aa(S)$, the fibre $f^{-1}(0)\subset S$ is an $S$-$\AR$-closed set. If $S$ has the ZSP, then all $S$-$\AR$-closed sets arise in this way. By Theorem~\ref{thm:ASey-zero-set-thm}, the ZSP holds for $\AR$-closed subsets of Nash manifolds.
\item[(ii)] As we show in Proposition~\ref{prop:AEP-implies-ZSP} below, every semialgebraic set with the AEP also satisfies the ZSP. A large class of such sets is provided by Theorem~\ref{thm:basic-closed-has-AEP}, Proposition~\ref{prop:comps-AEP-implies-AEP}, and Corollary~\ref{cor:1-dim-has-AEP} below.
\item[(iii)] For a semialgebraic set $S$, having the AEP is equivalent to the restriction homomorphism $\varrho:\Aa(S)\to\Aa(E)$ being surjective, and hence inducing an isomorphism
\[
\Aa(E)\,\cong\,\Aa(S)/\II_S(E)\,,
\]
for every $S$-$\AR$-closed set $E$. In general, as we showed in Theorem~\ref{thm:Aa-integral-equiv}, if $E\subset S$ is not arc-symmetric in $S$ then the $\Aa(S)$-module structure of $\Aa(E)$ is far more complicated.
\end{itemize}
\end{remark}

For the remainder of the paper, we shall adopt the following notation: For an ideal $I$ in the ring $\Aa(S)$ of semialgebraic arc-analytic functions on $S$, we denote by $\ZZ_S(I)$ the \emph{zero-set of $I$}, that is, the subset of $S$ of common zeroes of all the elements of $I$. For a subset $E\subset S$, we denote by $\II_S(E)$ the \emph{vanishing ideal of $E$}, that is, the ideal in $\Aa(S)$ of all the functions vanishing on $E$.

\vspace{2ex}
%%%%%%%%%%%%%%%%%%%%%%%%%%%%%%%%%%%%%%%%%%%%%%%%%%
%%%%%%%%%%%%%%%%% Section %%%%%%%%%%%%%%%%%%%%%%%%
%%%%%%%%%%%%%%%%%%%%%%%%%%%%%%%%%%%%%%%%%%%%%%%%%%

\section{Arc-analytic irreducibility of semialgebraic sets}
\label{sec:arc-irreducibility}

We now turn to the investigation of the geometric structure of a semialgebraic set from the point of view of its arc-analytic function space. This idea and many of the results of this section and the next two are due to Seyedinejad~\cite{S}.

\begin{definition}[{\cite[Def.\,3.3]{S}}]
\label{def:arcan-irreducible}
Let $S\subset\R^n$ be a semialgebraic set. We say that $S$ is \emph{irreducible} if $\Aa(S)$ is an integral domain.
\end{definition}

\begin{proposition}
\label{prop:s-AR-irred-implies-arcan-irred}
Let $S\subset\R^n$ be semialgebraic. Suppose $X\subset S$ is $S$-$\AR$-irreducible. Then $X$ is an irreducible semialgebraic set.
\end{proposition}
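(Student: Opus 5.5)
We need to show $\Aa(X)$ is an integral domain. Suppose $f,g\in\Aa(X)$ with $fg=0$ on $X$. We consider the zero sets $f^{-1}(0)$ and $g^{-1}(0)$ inside $X$, with the aim of showing that one of them is all of $X$.

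The first step is to show that these zero sets are $X$-$\AR$-closed. By Proposition~\ref{prop:arcan-AR-continuous}(ii) applied to $X$ as the base semialgebraic set, $Z_f:=f^{-1}(0)$ and $Z_g:=g^{-1}(0)$ are arc-symmetric in $X$. They are also semialgebraic, by Tarski--Seidenberg.

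The second step, which is the main point, is to pass from arc-symmetry in $X$ to arc-symmetry in $S$. Let $\gamma:(-1,1)\to S$ be an analytic arc with $\gamma((-1,0))\subset Z_f$. Since $X$ is $S$-$\AR$-closed and $Z_f\subset X$, arc-symmetry of $X$ in $S$ gives $\gamma((-1,1))\subset X$. Thus $\gamma$ is an analytic arc into $X$, and arc-symmetry of $Z_f$ in $X$ then gives $\gamma((-1,1))\subset Z_f$. Hence $Z_f$, and likewise $Z_g$, is a semialgebraic arc-symmetric subset of $S$, i.e.\ $S$-$\AR$-closed. This is the step I expect to need care, since arc-symmetry is defined relative to the ambient set. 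The argument uses only that arcs in $S$ starting in a subset of $X$ remain in $X$, which is exactly what $S$-$\AR$-closedness of $X$ provides; this is where the hypothesis on $X$ enters.

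Finally, since $fg=0$, we have $X=Z_f\cup Z_g$, a union of two $S$-$\AR$-closed subsets. By $S$-$\AR$-irreducibility of $X$, one of them, say $Z_f$, equals $X$, so $f\equiv0$ on $X$. Hence $\Aa(X)$ has no zero divisors.

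It remains to check that $\Aa(X)$ is not the zero ring. This holds because $X$ is nonempty, by the definition of irreducibility, so $1\neq0$ in $\Aa(X)$. Therefore $\Aa(X)$ is an integral domain, and $X$ is irreducible in the sense of Definition~\ref{def:arcan-irreducible}.
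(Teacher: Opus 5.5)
Your proof is correct and follows essentially the same route as the paper. The zero sets of $f$ and $g$ are arc-symmetric in $X$, hence in $S$ because $X$ is $S$-$\AR$-closed, and $S$-$\AR$-irreducibility of $X=Z_f\cup Z_g$ then forces $f\equiv0$ or $g\equiv0$; you additionally spell out the transitivity of arc-symmetry and the check that $\Aa(X)\neq0$, both of which the paper leaves implicit.
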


\begin{proof}
Indeed, suppose $f_1,f_2\in\Aa(X)$ are such that $f_1f_2=0$.
Both $\ZZ_X(f_1)$ and $\ZZ_X(f_2)$ are arc-symmetric subsets of $X$ and hence, as $X$ is arc-symmetric in $S$, both $\ZZ_X(f_1)$ and $\ZZ_X(f_2)$ are arc-symmetric in $S$. Thus, $X=\ZZ_X(f_1)\cup\ZZ_X(f_2)$ is a decomposition into $S$-$\AR$-closed sets and so by $S$-$\AR$-irreducibility of $X$, $\ZZ_X(f_i)=X$ for some $i=1,2$.
This means $f_1=0$ or $f_2=0$.
\end{proof}

\begin{proposition}[cf.~{\cite[Prop.\,3.8]{S}}]
\label{prop:prime-when-irred}
Let $S\subset\R^n$ be a semialgebraic set and let $E\subset S$ be its semialgebraic subset.
If $E$ is irreducible, then $\II_S(E)$ is a prime ideal in $\Aa(S)$.
The converse holds if $S$ has the arc-analytic extension property and $E$ is $S$-$\AR$-closed.
\end{proposition}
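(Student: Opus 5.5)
For the forward direction, I will work directly from the definitions. Suppose $E$ is irreducible, meaning $\Aa(E)$ is an integral domain, and let $f,g\in\Aa(S)$ satisfy $fg\in\II_S(E)$. The restrictions $f|_E$ and $g|_E$ are semialgebraic, since their graphs are intersections of semialgebraic sets. They are also arc-analytic on $E$, because every analytic arc $\gamma:(-1,1)\to E$ is in particular an analytic arc into $S$. Hence $f|_E,g|_E\in\Aa(E)$ and $f|_E\cdot g|_E=0$. Since $\Aa(E)$ is a domain, one of them vanishes, so $f\in\II_S(E)$ or $g\in\II_S(E)$. It remains to check that $\II_S(E)$ is proper. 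This holds as long as $E\neq\varnothing$, which is implicit: the zero ring is not a domain, so $E$ must be nonempty, and then $1\notin\II_S(E)$.

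For the converse, assume $S$ has the AEP, $E$ is $S$-$\AR$-closed, and $\II_S(E)$ is prime. By Remark~\ref{rem:zero-set-arcan-ext}(iii), the restriction homomorphism $\varrho:\Aa(S)\to\Aa(E)$ is surjective, and its kernel is exactly $\II_S(E)$. Therefore $\Aa(E)\cong\Aa(S)/\II_S(E)$, and the quotient of a ring by a prime ideal is an integral domain. So $E$ is irreducible.

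The only point needing care is the surjectivity of $\varrho$, which is precisely what the AEP provides for $S$-$\AR$-closed $E$. Both hypotheses are genuinely used here, since without them $\Aa(E)$ may be strictly larger than the image of $\varrho$. No further geometric input is needed; the argument is purely algebraic once the restriction map is identified.
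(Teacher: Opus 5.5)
Your proposal is correct and follows the paper's proof: the forward direction by restricting $f,g$ to $E$ and using that $\Aa(E)$ is a domain, and the converse via the isomorphism $\Aa(E)\cong\Aa(S)/\II_S(E)$ from Remark~\ref{rem:zero-set-arcan-ext}(iii). You also check that $\II_S(E)$ is proper, noting $E\neq\varnothing$ because the zero ring is not a domain; the paper's one-line ``easy to see'' leaves this implicit.
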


\begin{proof}
The first claim is easy to see.
The second one follows from the ring isomorphism in Remark~\ref{rem:zero-set-arcan-ext}(iii).
\end{proof}

The following lemma expands on~\cite[Lem.\,3.4]{S}.

\begin{lemma}
\label{lem:zsp-irred-implies-AR-irred}
Let $S\subset\R^n$ be a semialgebraic set and let $E\subset S$ be an irreducible semialgebraic subset.
If $S$ has the zero-set property, then the $S$-$\AR$ closure $\Cl^\AR_S(E)$ of $E$ is $S$-$\AR$-irreducible.
In particular, $\Cl^\AR_M(E)$ is $\AR$-irreducible, for every Nash manifold $M$ containing $E$.
\end{lemma}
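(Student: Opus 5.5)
Let $F=\Cl^\AR_S(E)$ and argue by contradiction. Suppose $F=F_1\cup F_2$, where $F_1,F_2$ are $S$-$\AR$-closed and proper in $F$. By the zero-set property of $S$ there are functions $g_1,g_2\in\Aa(S)$ with $g_i^{-1}(0)=F_i$ for $i=1,2$. Restrict them to $E$ and write $f_i=g_i|_E$. Arc-analyticity passes to restrictions, since every analytic arc in $E$ is also an arc in $S$, and semialgebraicity is clear, so $f_i\in\Aa(E)$. The product $f_1f_2=(g_1g_2)|_E$ vanishes on $E$, because $E\subset F=F_1\cup F_2$. Since $\Aa(E)$ is an integral domain, one factor vanishes identically, say $f_1=0$. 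Then $E\subset g_1^{-1}(0)=F_1$. But $F_1$ is $S$-$\AR$-closed and contains $E$, so minimality of the closure gives $F\subset F_1$, contradicting properness. Finally, $F$ is nonempty provided $E$ is; note that $\Aa(\varnothing)$ is the zero ring, which is not a domain. Hence $F$ is $S$-$\AR$-irreducible.

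This first part is formal. The only point to check is that the zero-set property is applied to $S$-$\AR$-closed subsets of $S$ itself, which $F_1$ and $F_2$ are. Note also that this argument does not use the structure of the decomposition into components: any decomposition into two proper closed subsets is ruled out directly.

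For the ``in particular'' part, apply the first statement with $S$ replaced by the Nash manifold $M$. Then $E\subset M$, and $\Aa(E)$ is defined intrinsically, so it does not depend on the ambient set. The $M$-$\AR$ topology coincides with Kurdyka's $\AR$ topology on $M$, because Definitions \ref{def:arc-symm} and \ref{def:arc-symm-in-semialg} agree when $S=M$. Moreover, $M$ has the zero-set property by Theorem~\ref{thm:ASey-zero-set-thm} applied with $X$ ranging over the $\AR$-closed subsets of $M$; see Remark~\ref{rem:zero-set-arcan-ext}(i).

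The one subtlety, which I expect to be the main point requiring care, is that Theorem~\ref{thm:ASey-zero-set-thm} and the $\AR$ theory are stated for Nash manifolds, which are possibly disconnected or of mixed dimension. If needed, one works componentwise: $\AR$-closed sets in $M$ are unions of their traces on the finitely many connected components, and the zero functions can be glued across components, since the components are open and closed. With these observations, the first part applies verbatim and gives $\AR$-irreducibility of $\Cl^\AR_M(E)$.
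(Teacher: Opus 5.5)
Your proof is correct and is essentially the paper's argument. The paper decomposes $\Cl^\AR_S(E)$ into all of its $S$-$\AR$-irreducible components $X_1,\dots,X_r$, uses the zero-set property to write $X_i=\ZZ_S(f_i)$, and applies irreducibility of $E$ to $f_1\cdots f_r|_E=0$; your two-set contradiction is the same idea in different packaging. Your handling of the ``in particular'' clause (taking $S=M$, which has the zero-set property by Theorem~\ref{thm:ASey-zero-set-thm}) spells out what the paper leaves implicit.
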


\begin{proof}
Let $X_1,\dots,X_r$ be the $S$-$\AR$-irreducible components of $\Cl^\AR_S(E)$. By assumption, there are functions $f_1,\dots,f_r\in\Aa(S)$ such that $\ZZ_S(f_i)=X_i$ for every $i$. Thus, $f_1\cdots f_r|_E=0$. By irreducibility of $E$, there is a $j$ such that $f_j|_E=0$, and hence $E\subset\ZZ_S(f_j)=X_j$.
Therefore, $\Cl^\AR_S(E)=X_j$\,.
\end{proof}

\begin{remark}
\label{rem:AR-irred-not-irred}
The converse of Lemma~\ref{lem:zsp-irred-implies-AR-irred} is not true. For instance, the set $S=\R\setminus\{0\}$ from Remark~\ref{rem:AR-not-induced} has $\AR$-irreducible $\AR$-closure in $\R$, namely $\R$ itself. But the ring $\Aa(S)$ is not a domain (consider the characteristic functions of intervals $(-\infty,0)$ and $(0,\infty)$). See below for less obvious examples, in which $S$ is connected.

On the other hand, if a semialgebraic set $S$ is $\AR$-closed in a Nash manifold $M$, then $S$ is irreducible in the sense of Definition~\ref{def:arcan-irreducible} if and only if it is $\AR$-irreducible in $M$. Indeed, this follows from the ring isomorphism in Theorem~\ref{thm:ASey-arc-an-extension} and the Arc-analytic Nullstellensatz (Theorem~\ref{thm:AR-Nullstellensatz}).
\end{remark}

\begin{example}[{\cite[Ex.\,4.6]{S}}]
\label{ex:1reducible}
Let $S$ be a subset of $M=\R^2$ defined as
\[
S=\{(x,y)\in\R^2:\, y^2=x(x-1)^2\;\wedge\;(y>0\;\vee\;0<x\leq1)\}\,.
\]
Then, $\Cl^\AR_M(S)$ is the cubic $\{y^2=x(x-1)^2\}$, which is $\AR$-irreducible in $M$. The set $S$ itself is not irreducible, because the function $y^2-x(x-1)^2$ has a factorization in $S$
\begin{equation}
\label{eq:cubic-branches}
y^2-x(x-1)^2\;=\;(y-\sqrt{x}(x-1))(y+\sqrt{x}(x-1))
\end{equation}
into functions arc-analytic on $S$.

On the other hand, $S$ as a subset of the (connected) Nash manifold $H=\{(x,y)\in\R^2: x>0\}$ has $\AR$-reducible $\AR$-closure. Indeed, the two $\AR$-irreducible components of $\Cl^\AR_H(S)$ are the zero loci $Z_1$, $Z_2$ of the factors from~\eqref{eq:cubic-branches}, which are arc-analytic (in fact, analytic) on the whole $H$ (see Fig.~1).
\end{example}

\begin{center}
\setlength{\unitlength}{.5cm}
\begin{picture}(29,10)
\makebox(9,10)[l]
{
\color{blue}
 \put(0.03,3.6){\circle{.23}}
 \put(4,4.5){\makebox(1,2){$S$}}
\begin{tikzpicture}
 \begin{axis}[axis lines=none, xmin = -0.5, xmax = 2, ymin = -1.5, ymax = 1.5]
  \addplot[white, domain=-1.5:-1, samples=100, variable=t]
	 ({t^2}, {t^3 - t});
  \addplot[blue, thick, domain=-1:-0.05, samples=100, variable=t]
	 ({t^2}, {t^3 - t});
  \addplot[blue, thick, domain=0.05:1.5, samples=100, variable=t]
	 ({t^2}, {t^3 - t});
 \end{axis}
 \draw[white, fill=black, fill opacity=0.1] (0.87,0) rectangle (5,4);
 \draw[black, dotted, opacity=0.75] (0.87,0) -- +(0,4);
 \draw (1.5,0.5) node [text=black, opacity=0.5] {H};
\end{tikzpicture}
}
\makebox(1,1)
{
}
\makebox(9,10)[c]
{
\color{black}
 \put(2.5,4.5){\makebox(1,2){$\Cl^\AR_M(S)$}}
\begin{tikzpicture}
 \begin{axis}[axis lines=none, xmin = -0.5, xmax = 2, ymin = -1.5, ymax = 1.5]
  \addplot[black, thick, domain=-1.5:1.5, samples=100, variable=t]
	 ({t^2}, {t^3 - t});
 \end{axis}
\end{tikzpicture}
}
\makebox(1,1)
{
}
\makebox(9,10)[r]
{
\color{blue}
 \put(0.03,3.6){\circle{.23}}
 \put(5.5,6){\makebox(1,2){$Z_1$}}
\color{red}
 \put(6,1){\makebox(1,2){$Z_2$}}
\color{black}
 \put(2.5,4.5){\makebox(1,2){$\Cl^\AR_H(S)$}}
\begin{tikzpicture}
 \begin{axis}[axis lines=none, xmin = -0.5, xmax = 2, ymin = -1.5, ymax = 1.5]
  \addplot[red, thick, domain=-1.5:-0.05, samples=100, variable=t]
	 ({t^2}, {t^3 - t});
  \addplot[blue, thick, domain=0.05:1.5, samples=100, variable=t]
	 ({t^2}, {t^3 - t});
 \end{axis}
 \draw[white, fill=black, fill opacity=0.1] (0.87,0) rectangle (5,4);
 \draw[black, dotted, opacity=0.75] (0.87,0) -- +(0,4);
 \draw (1.5,0.5) node [text=black, opacity=0.5] {H};
\end{tikzpicture}
}
\end{picture}
{~}\\
Figure 1
\end{center}
\medskip

\begin{example}
\label{ex:2reducible}
Let $B$ be the closed ball with radius $\sqrt{2}$ in $\R^3$ centered at $(0,1,-1)$,
\[
B=\{(x,y,z)\in\R^3:\, x^2+(y-1)^2+(z+1)^2\leq2\}\,,
\]
and let $S$ be the trace on $B$ of the Whitney umbrella,
\[
S=\{(x,y,z)\in B:\, x^2=zy^2\}\,.
\]
We claim that $S$ is not irreducible, but the $\AR$-closure $\Cl^\AR_U(S)$ is $\AR$-irreducible for an \emph{arbitrarily small} connected open semialgebraic neighbourhood $U$ of $S$ in $\R^3$ (and hence, in any Nash manifold containing $S$).
To see that $\Aa(S)$ is not a domain, consider functions $f,g:S\to\R$ defined as $f(x,y,z)=x^2+y^2$, and
\[
g(x,y,z)=\begin{cases}0 & \mathrm{if\ }z\geq0\\ z & \mathrm{if\ }z<0\,.\end{cases}
\]
Let $S^{(1)}$ and $S^{(2)}$ denote the one-dimensional and the two-dimensional locus of $S$, respectively (see Fig.~2\footnote{The 3D surface parametrization and LaTeX PGFPlots code for Figure~2 was generated with the assistance of Google's Gemini AI.}). Then, the arc-analytic function $f$ vanishes on $S^{(1)}$ and is non-zero on $S^{(2)}\setminus\{(0,0,0)\}$, while $g$ vanishes on $S^{(2)}$ and is non-zero on $S^{(1)}$. Also, $g$ is arc-analytic on $S$, since $g$ is analytic along every arc contained in either $S^{(1)}\cup\{(0,0,0)\}$ or in $S^{(2)}$, and there are no other analytic arcs in $S$.

On the other hand, if $U$ is an arbitrary open neighbourhood of $S$ in $\R^3$, then $U$ contains a small open ball centered at the origin, and hence the germ at the origin of the Whitney umbrella. It follows that $\Cl^\AR_U(S^{(2)})$ contains the part of the $z$-axis enclosed in $U$.
\end{example}

\begin{center}
\setlength{\unitlength}{.45cm}
\begin{picture}(15,15)
\makebox(15,15)[c]
{
%
% Generated by Google Gemini (start)
%
\begin{tikzpicture}
 \begin{axis}[
  axis lines=none,
  xmin=-1.2, xmax=1.2,
  ymin=0, ymax=2.2,
  zmin=-2.5, zmax=1.0,
  view={50}{20},
  colormap/redyellow,
  declare function={zmax(\y) = (sqrt(\y^4 + 8*\y + 4) - (\y^2 + 2)) / 2;},
  x post scale=1.6,
  y post scale=1.6,
  z post scale=3,
  ]
% 1. The Bounding Sphere
  \addplot3[
            surf,
            opacity=0.05,
            color=black,
            faceted color=black!75,
            samples=30,
            domain=0:360,     
            domain y=0:180,   
        ] (
            {sqrt(2)*sin(y)*cos(x)},        
            {1 + sqrt(2)*sin(y)*sin(x)},    
            {-1 + sqrt(2)*cos(y)}           
        );
  % 2. The Inner Constrained Surface
  \addplot3[
            surf,
            opacity=0.5,
            samples=30,
            domain=-1:1,       
            domain y=0.01:2.0  
        ] 
        (
            {x * y * sqrt(zmax(y))}, 
            {y},                     
            {x^2 * zmax(y)}          
        );
  % 3. The z-axis Piece Contained inside the Sphere (from z=-2 to z=0)
  \addplot3[
            blue,
            thick,
            samples y=0,
            domain=-2:0
        ] (0, 0, {x});
 \end{axis}
 \draw (5.5,3) node [text=black, opacity=0.5] {B};
\end{tikzpicture}
%
% Generated by Google Gemini (end)
%
\color{blue}
 \put(-11.07,10.3){\circle*{.23}}
 \put(-13.5,10){\makebox(1,1){$(0,0,0)$}}
 \put(-10.3,3){\makebox(1,1){$S^{(1)}$}}
\color{red}
 \put(-5,10){\makebox(1,1){$S^{(2)}$}}
}
\end{picture}
{~}\\
Figure 2
\end{center}
\medskip

The following result, generalizing Proposition~\ref{prop:ident-princ}, is due to Seyedinejad~\cite[Thm.\,3.6]{S}.

\begin{proposition}[\textbf{Arc-Analytic Identity Principle}]
\label{prop:general-ident-princ}
Let $S$ be a semialgebraic set in a Nash manifold $M$. Then, $S$ is irreducible if and only if, for any function $f\in\Aa(S)$, if $f|_U=0$ for a subset $U\subset S$ with $\dim{U}=\dim{S}$ then $f\equiv0$.
\end{proposition}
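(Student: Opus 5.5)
The equivalence splits into two implications. The backward one is a dimension count. The forward one needs an explicit zero divisor, and constructing it is where I expect the real difficulty. Throughout, $d=\dim S$ and $M$ is the Nash manifold containing $S$.

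\emph{The identity property implies irreducibility.} Suppose the identity property of Proposition~\ref{prop:general-ident-princ} holds, and let $f,g\in\Aa(S)$ satisfy $fg=0$. Then $S=\ZZ_S(f)\cup\ZZ_S(g)$, and both sets are semialgebraic. The dimension of a finite union of semialgebraic sets is the maximum of the dimensions, so one of them, say $\ZZ_S(f)$, has dimension $d$. Applying the hypothesis with $U=\ZZ_S(f)$ gives $f\equiv0$. Hence $\Aa(S)$ has no zero divisors and $S$ is irreducible.

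\emph{Irreducibility implies the identity property.} Let $S$ be irreducible, and suppose $f\in\Aa(S)$ vanishes on some $U$ with $\dim U=d$ but $f\not\equiv0$. I will produce $g\in\Aa(S)$ with $g\not\equiv0$ and $fg=0$, contradicting irreducibility. Let $Z=\ZZ_S(f)$. It is $S$-$\AR$-closed by Remark~\ref{rem:zero-set-arcan-ext}(i), and Euclidean closed by Lemma~\ref{lem:AR-closed-is-closed}; moreover $\dim Z=d$ and $Z\neq S$. The function $g$ must vanish on $W:=\Cl^\AR_S(S\setminus Z)$ and be nonzero somewhere on $Z$. The key intermediate claim is $\dim(Z\setminus W)=d$.

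To prove the claim I would take a Nash stratification of $S$ compatible with $Z$ and $W$ (Proposition~\ref{subordinate-strat}). Then I would apply Lemma~\ref{lem:dim-drop} to each $d$-dimensional stratum $\Gamma\subset Z$, regarded as a Nash manifold in its own right, with $E=W\cap\Gamma$. The lemma gives that either $\Gamma\subset W$ or $\dim(W\cap\Gamma)<d$. The case to rule out is when every $d$-dimensional stratum of $Z$ lies in $W$. For this I would use curve selection (Proposition~\ref{prop:Nash-curve-sel}) together with the fact that $f$ is analytic along arcs. The goal is to show that such a stratum cannot be reached by analytic continuation of arcs starting in $S\setminus Z$, along which $f\neq0$. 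If that works, $\dim(Z\setminus W)=d$.

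\emph{Constructing $g$ (main obstacle).} The hard step is producing a nonzero $g\in\Aa(S)$ vanishing on $W$ without assuming the zero-set property on $S$. My first attempt would be to localize the ambient manifold. Set $M'=M\setminus\Cl_M(S\setminus W)$ and pass to a connected component $M''$ of $M'$ meeting a $d$-dimensional part of $Z\setminus W$. Inside $M''$, apply Theorem~\ref{thm:ASey-zero-set-thm} to the $\AR$-closure of $W\cap M''$ to get a function $F$. I would then define $g=F$ on $S\cap M''$ and $g=0$ on the rest of $S$, possibly multiplied by a power of a distance function to the frontier so that $g$ stays arc-analytic across it. Checking arc-analyticity along arcs that cross between $W$ and $Z\setminus W$ is the step most likely to fail. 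If it does, I would fall back to resolving singularities of $\Cl^\AR_M(S)$ (Corollary~\ref{cor:res-Nash}), so that these arcs lift to a Nash manifold and the argument can be run upstairs.
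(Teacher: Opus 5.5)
Your backward implication is correct, and more direct than the paper's: a finite union of semialgebraic sets has the dimension of its largest member, so Lemma~\ref{lem:dim-drop} is not needed.

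The forward implication has a genuine gap at exactly the step you flag. You never construct the zero divisor $g$, and the construction you outline cannot work.
\begin{itemize}
\item As written, $M'=M\setminus\Cl_M(S\setminus W)$ is disjoint from $S\setminus W=Z\setminus W$, because $W\supset S\setminus Z$. So no component $M''$ of $M'$ meets $Z\setminus W$ at all.
\item Suppose you repair $M'$. A function that equals $F$ (times any power of a distance) on one side of a frontier and $0$ on the other is still not analytic along an arc in $Z$ crossing that frontier. An analytic germ vanishing on a one-sided interval vanishes identically, so $F$ would have to vanish along the whole arc; finite-order flattening does not help.
\item The resolution fallback is too vague to assess.
\item Your sketch of $\dim(Z\setminus W)=d$ does not control the $S$-$\AR$-closure. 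That closure must also absorb continuations, inside $Z$, of arcs lying in the frontier $\Cl_S(S\setminus Z)\cap Z$. To prove the claim you would need to show that $\Cl_S(S\setminus Z)\cup\Cl^\AR_S(\Cl_S(S\setminus Z)\cap Z)$ is $S$-arc-symmetric.
\end{itemize}

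The missing idea is that $g$ need not vanish on an $S$-$\AR$-closed set; that would require the zero-set property, which you do not have. It only has to vanish on $V=S\setminus Z$, and the interface between $V$ and $Z$ is small. Here is how the argument goes.
\begin{itemize}
\item Put $W_0=\Cl_S(V)\cap Z$. This is the frontier of $V$ in $S$, so $\dim W_0<\dim V\leq d$.
\item Apply Theorem~\ref{thm:ASey-zero-set-thm} in $M$ itself, where the zero-set property is available. This gives $h\in\Aa(M)$ with $\ZZ_M(h)=\Cl^\AR_M(W_0)$, which has dimension $<d$ by Proposition~\ref{prop:alg-closure-dim}.
\item Define $g=h$ on $Z$ and $g=0$ on $V$.
\item Let $\gamma$ be an analytic arc in $S$. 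Since $f\circ\gamma$ is analytic, either $\gamma$ lies entirely in $Z$, and then $g\circ\gamma=h\circ\gamma$. Or $\gamma$ meets $Z$ only at isolated parameters, whose images are limits of points of $V$ and hence lie in $W_0$, where $h=0$; then $g\circ\gamma\equiv0$. So $g\in\Aa(S)$.
\item Since $\dim Z=d>\dim\ZZ_M(h)$, we have $g\not\equiv0$, while $fg\equiv0$. This contradicts irreducibility.
\end{itemize}
This is the paper's argument. The paper phrases it through the pair $g_1,g_2$ and then $fh=0$, but $fg_1=0$ already suffices.
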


\begin{proof}
Let $k=\dim{S}$.
Suppose that $S$ is irreducible, and a function $f\in\Aa(S)$ vanishes on a subset $U\subset S$ with $\dim{U}=\dim{S}$. Let us denote the entire $\ZZ_S(f)$ by $U$ again. Put $V=S\setminus U$, and $W=\Cl_S(V)\cap U$. Note that $\dim{W}<k$, hence also $\dim\Cl^\AR_M(W)<k$, by Proposition~\ref{prop:alg-closure-dim}. By Theorem~\ref{thm:ASey-zero-set-thm}, there exists a function $h\in\Aa(M)$ such that $\Cl^\AR_M(W)=\ZZ_M(h)$. Consider functions $g_1,g_2:S\to\R$ defined as follows
\[
g_1(x)=\begin{cases}h(x) &\mathrm{if\ }x\in U\\ 0 &\mathrm{if\ }x\in V\end{cases}\qquad\mathrm{and}\qquad
g_2(x)=\begin{cases}h(x) &\mathrm{if\ }x\in V\\ 0 &\mathrm{if\ }x\in U\end{cases}\,.
\]
We claim that $g_1$ and $g_2$ are arc-analytic on $S$. Note also that $g_1$ is not identically zero on $U$, since $\dim\ZZ_M(h)<\dim{U}$. On the other hand, $g_1g_2\equiv0$, so $g_2\equiv0$ by irreducibility of $S$. It follows that $V\subset\ZZ_M(h)$, and thus $fh=0$ on $S$. Therefore, $f\equiv0$ by irreducibility of $S$ again.

It remains to verify that $g_1,g_2\in\Aa(S)$. By arc-symmetry of $U$ in $S$, it suffices to show that the $g_j$ are analytic along every arc $\gamma:(-1,1)\to S$ whose image is either entirely contained in $U$, or else intersects $U$ only at isolated points and is thus contained in $V\cup W$. For any such $\gamma$ of the first kind, we have $g_1\circ\gamma=h\circ\gamma$ and $g_2\circ\gamma\equiv0$. And for every $\gamma$ of the second kind, $g_1\circ\gamma\equiv0$ and $g_2\circ\gamma=h\circ\gamma$. This completes the proof of the forward implication.

For the proof of the converse, let $f,g\in\Aa(S)$ be such that $fg\equiv0$. Then, one of the sets $\ZZ_S(f)$ and $\ZZ_S(g)$ contains a $k$-dimensional subset of $S$ (by Lemma~\ref{lem:dim-drop} applied to a connected component of $\Reg_k{S}$), and hence the whole of $S$, by assumption. Therefore, $f\equiv0$ or else $g\equiv0$, showing that $\Aa(S)$ is a domain.
\end{proof}

\begin{corollary}[{\cite[Cor.\,3.7]{S}}]
\label{cor:smooth-connected-is-irred}
Every connected Nash manifold is an irreducible semialgebraic set.
\end{corollary}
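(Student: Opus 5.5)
We have to show that a connected Nash manifold $M$ is irreducible, i.e. that $\Aa(M)$ is an integral domain. The plan is to reduce this to the classical theory on Nash manifolds. We regard $M$ as a Nash manifold in its own right. Then $M$ is $\AR$-closed in itself, and the question becomes whether $M$ is $\AR$-irreducible in the sense of Theorem~\ref{thm:AR-topology}. By Remark~\ref{rem:AR-irred-not-irred}, for a set that is $\AR$-closed in a Nash manifold (here $M$ in $M$), irreducibility in the sense of Definition~\ref{def:arcan-irreducible} is equivalent to $\AR$-irreducibility. So it suffices to show that $M$ is $\AR$-irreducible in $M$.

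For $\AR$-irreducibility we argue by dimension using Lemma~\ref{lem:dim-drop}. Let $k=\dim M$, and suppose $M=E_1\cup E_2$ with $E_1,E_2$ both $\AR$-closed in $M$. Apply Lemma~\ref{lem:dim-drop} with $\G=M$, which is connected and smooth. If neither $E_i$ equals $M$, then $\dim E_i<k$ for $i=1,2$. But then $\dim M=\dim(E_1\cup E_2)<k$, a contradiction. Hence $E_1=M$ or $E_2=M$, so $M$ is $\AR$-irreducible.

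Alternatively, we can argue directly with Proposition~\ref{prop:general-ident-princ}, which avoids citing the remark. Take $f,g\in\Aa(M)$ with $fg\equiv0$. Then $M=\ZZ_M(f)\cup\ZZ_M(g)$, and both zero sets are $\AR$-closed by Proposition~\ref{prop:arcan-AR-continuous}(ii). By the argument of the previous paragraph, one of them, say $\ZZ_M(f)$, equals $M$, so $f\equiv0$. This shows directly that $\Aa(M)$ is a domain.

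The only point needing care is the application of Lemma~\ref{lem:dim-drop} with $\G=M$ itself. This requires $M$ to be connected, which is assumed, and the zero sets to be semialgebraic, which holds by Tarski–Seidenberg since $f$ and $g$ are semialgebraic. I do not expect any real obstacle beyond checking these hypotheses.
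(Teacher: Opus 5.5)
Your argument is correct and is essentially the paper's: the paper invokes the converse direction of Proposition~\ref{prop:general-ident-princ}, whose proof is exactly your observation that one of $\ZZ_M(f)$, $\ZZ_M(g)$ must be $k$-dimensional, and then applies Lemma~\ref{lem:dim-drop} with $\G=M$ to the arc-symmetric set $\ZZ_M(f)$, which is what your direct argument does by hand. Your first route via Remark~\ref{rem:AR-irred-not-irred} is also valid, though Proposition~\ref{prop:s-AR-irred-implies-arcan-irred} with $X=S=M$ already suffices there; and your ``alternative'' does not actually invoke Proposition~\ref{prop:general-ident-princ}, but rather inlines the argument of its converse direction.
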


\begin{proof}
Let $M$ be a connected Nash manifold, and let $f\in\Aa(M)$ be such that $f|_U=0$ for some semialgebraic set $U\subset M$ of dimension $\dim{U}=\dim{M}$. By Proposition~\ref{prop:general-ident-princ}, it suffices to show that $f\equiv0$. Since $\ZZ_M(f)$ is an arc-symmetric subset of $M$, the claim follows from Lemma~\ref{lem:dim-drop}.
\end{proof}

Another important consequence of Proposition~\ref{prop:general-ident-princ} is the following corollary, which we will use to establish the equality between the geometric dimension of a semialgebraic set $S$ and the Krull dimension of its ring $\Aa(S)$ of arc-analytic functions (see Theorem~\ref{thm:dim-equals-Krull-dim} below).

\begin{corollary}
\label{cor:irred-implies-dim-drop}
Let $S\subset\R^n$ be a semialgebraic set, which has the zero-set property. Suppose $X,Y\subset S$ are $S$-$\AR$-closed sets, $Y\subset X$, and $X$ is irreducible as a semialgebraic set. Then,
\[
\dim{Y}=\dim{X}\ \Longrightarrow\ Y=X\,.
\]
\end{corollary}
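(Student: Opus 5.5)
The plan is to apply the Arc-Analytic Identity Principle (Proposition~\ref{prop:general-ident-princ}) to the irreducible set $X$. For this I need a single function in $\Aa(X)$ that vanishes exactly on $Y$, and the zero-set property of $S$ is what supplies it.

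\textbf{Step 1: a defining function for $Y$.} Since $Y$ is $S$-$\AR$-closed and $S$ has the zero-set property, there is some $f\in\Aa(S)$ with $\ZZ_S(f)=Y$. Restriction preserves arc-analyticity and semialgebraicity, because every analytic arc into $X$ is also an analytic arc into $S$. Hence $g:=f|_X\in\Aa(X)$, and since $Y\subset X$ we get $\ZZ_X(g)=Y$.

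\textbf{Step 2: apply the Identity Principle.} Assume $\dim Y=\dim X$. Then $g$ vanishes on the subset $U=Y\subset X$, which satisfies $\dim U=\dim X$. We must place $X$ inside a Nash manifold so that Proposition~\ref{prop:general-ident-princ} applies; taking $M=\R^n$ works. Since $X$ is irreducible, that proposition gives $g\equiv 0$ on $X$. Therefore $X=\ZZ_X(g)=Y$.

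\textbf{Main obstacle.} There is essentially none, since all the difficulty sits in the Identity Principle already proved. The only point needing care is that the hypothesis ``irreducible'' refers to $\Aa(X)$, the intrinsic ring of $X$, and not to $\Aa(S)$. This is exactly why we restrict $f$ to $X$ before invoking Proposition~\ref{prop:general-ident-princ}, rather than working with $\II_S(X)$. The zero-set property is used only to produce the function $f$ in Step~1.
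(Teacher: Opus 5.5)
Your proof is correct and is essentially the paper's argument. It obtains $f\in\Aa(S)$ with $\ZZ_S(f)=Y$ from the zero-set property, restricts it to $X$, and applies the Arc-Analytic Identity Principle (Proposition~\ref{prop:general-ident-princ}) to the irreducible set $X$ to get $f|_X\equiv0$, hence $X\subset Y$.
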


\begin{proof}
Suppose $\dim{Y}=\dim{X}$.
By assumption, there is a function $f\in\Aa(S)$ such that $\ZZ_S(f)=Y$. Then, the restriction $f|_X$ is an arc-analytic function on $X$, which vanishes on its semialgebraic subset of dimension $\dim{X}$. It follows from Proposition~\ref{prop:general-ident-princ} that $f|_X=0$, and so $X\subset\ZZ_S(f)=Y$.
\end{proof}
\medskip

We conclude this section with a characterization of irreducibility of pure-dimensional semialgebraic sets in terms of blow-ups, {\`a} la Theorem~\ref{thm:AR-smoothing} and Corollary~\ref{cor:1-AR-irred}.

\begin{theorem}
\label{thm:irred-criterion}
Let $S$ be a closed semialgebraic set in a Nash manifold $M$, of pure dimension $k$, and let $C_1,\dots,C_s$ be all the connected components of $\Reg_k{S}$.
The following conditions are equivalent:
\begin{itemize}
\item[(i)] The ring $\Aa(S)$ is an integral domain.
\item[(ii)] For every $j$, there exists a nonempty open subset $U_j$ of $C_j$, such that if \;$i,j\in\{1,\dots,s\}$ and $p\in U_i$, $q\in U_j$ are arbitrary then there exist indices $j_1,\dots,j_t\in\{1,\dots,s\}$, with $j_1=i$ and $j_t=j$, points $p^0_{j_l},p^1_{j_l}\in U_{j_l}$, with $p=p^0_{j_1}$ and $q=p^1_{j_t}$, and analytic arcs $\gamma_{j_l}:[0,1]\to\Cl_S(\Reg_k{S})$, all such that $\gamma_{j_l}(0)=p^1_{j_l}$ and $\gamma_{j_l}(1)=p^0_{j_{l+1}}$ for $l=1,\dots,t-1$.
\item[(iii)] For every finite composition of blow-ups
\[
\pi=\pi_1\circ\dots\circ\pi_s:\wt{M}=M_s\to M_{s-1}\to\dots\to M_1\to M_0=M
\]
with smooth Nash centres such that the exceptional locus $E$ of $\pi$ is nowhere dense in $S$, there exists a unique connected component $\wt{S}$ of $S'=\Cl_{\wt{M}}(\pi^{-1}(S\setminus E))$ such that $\pi(\wt{S})=S$.
\end{itemize}
Moreover, the implication (i)\,$\Rightarrow\,$(iii) \;holds without the pure-dimensionality assumption on $S$.
\end{theorem}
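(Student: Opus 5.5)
I would prove the cycle (i)$\Rightarrow$(iii)$\Rightarrow$(ii)$\Rightarrow$(i). The main tool is the Identity Principle (Proposition~\ref{prop:general-ident-princ}), together with the observation that pulling back by $\pi$ preserves arc-analyticity.

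\emph{(i)$\Rightarrow$(iii).} Fix $\pi$ as in (iii) and put $S'=\Cl_{\wt{M}}(\pi^{-1}(S\setminus E))$.

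First, $\pi(S')=S$. Indeed, $\pi$ is proper, so $\pi(S')$ is closed. It contains $S\setminus E$, which is dense in $S$ because $E$ is nowhere dense in $S$, so $\pi(S')\supset S$. Conversely, $\pi(S')\subset\Cl_M(S\setminus E)\subset S$ since $S$ is closed.

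Next, let $\wt{S}_1,\dots,\wt{S}_r$ be the connected components of $S'$; they are semialgebraic, finitely many, and closed. Their images $\pi(\wt{S}_l)$ are closed semialgebraic sets covering $S$, so some $\pi(\wt{S}_l)$ has dimension $\dim S$ on an open piece of $S$. The core step is to show that a single component maps onto $S$. For each $l$, consider the characteristic function $\chi_l$ of $\wt{S}_l$ on $S'$; it is arc-analytic on $S'$. I would like to push it down to $S$, but this fails on $E$.

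Instead I would argue with zero sets. Suppose no single $\pi(\wt{S}_l)$ contains $S$. Set $A_l=\pi(\wt{S}_l\cap\pi^{-1}(S\setminus E))$, the part of $S\setminus E$ above $\wt{S}_l$; its closure in $S$ is $\pi(\wt{S}_l)$. The aim is to build $f_1,f_2\in\Aa(S)$ with $f_1f_2=0$ and neither zero, by imitating the proof of Proposition~\ref{prop:general-ident-princ}. Take $h\in\Aa(M)$ vanishing exactly on $\Cl^\AR_M(W)$, where
\[
W=\pi(\wt{S}_1)\cap\bigcup_{l\neq1}\pi(\wt{S}_l),
\]
and let $f_1=h$ on $\pi(\wt{S}_1)$, $f_1=0$ elsewhere, with $f_2$ defined symmetrically.

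To check that $f_1,f_2$ are arc-analytic, I use the following fact: every analytic arc in $S$ not contained in $E$ lifts to an analytic arc in $S'$. This holds by properness of blow-ups and the valuative criterion, since on $\pi^{-1}(S\setminus E)$ the map $\pi$ is an isomorphism and the lift extends across isolated points. Such a lift stays in one component $\wt{S}_l$. Arcs lying inside $E$ are handled because $h$ vanishes on $\Cl^\AR_M(W)$. Here $E$ is $\AR$-closed by Remark~\ref{rem:exc-locus}, and I would enlarge $W$ by $E\cap\pi(\wt{S}_1)$ when defining $h$. By dimension count, $\dim W<k$ provided each $\pi(\wt{S}_l)$ is a union of top-dimensional pieces; otherwise I would first discard the components whose image is too small.

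Uniqueness of $\wt{S}$ follows by the same construction applied to two components that both surject. I expect this step to be the main obstacle: controlling where the images of distinct components overlap, so that $\dim W<\dim S$ without the pure-dimensionality hypothesis.

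\emph{(iii)$\Rightarrow$(ii).} Apply (iii) to a resolution $\pi$ of $\Cl^\Zar(S)$ (Corollary~\ref{cor:res-Nash}) made embedded. Choose it so that $\wt{S}$ is a connected set whose top-dimensional part is a union of closures of lifts $\wt{C}_j$, with the boundary pieces being semialgebraic subsets of a manifold. Take $U_j\subset C_j\setminus E$ open, and connect lifts in $\wt{S}$ by chains of analytic arcs. Such chains exist because a connected semialgebraic set is connected by piecewise-Nash paths, each piece can be pushed into the regular part, and the images under $\pi$ are analytic arcs in $\Cl_S(\Reg_kS)$.

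\emph{(ii)$\Rightarrow$(i).} Suppose $fg=0$ on $S$. On each $C_j$, one of $f,g$ vanishes identically, by Lemma~\ref{lem:dim-drop} and Corollary~\ref{cor:smooth-connected-is-irred}. Along an analytic arc $\gamma$ in $\Cl_S(\Reg_kS)$, the functions $f\circ\gamma$ and $g\circ\gamma$ are analytic with product zero, so one of them vanishes identically on $\gamma$. The chains of arcs in (ii), with endpoints in the open sets $U_j$, then propagate the vanishing of the same factor from $C_i$ to $C_j$. The propagation works through a density argument: varying the endpoint $q$ over $U_j$ shows that the factor vanishes on an open subset of $C_j$. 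By pure-dimensionality, $S=\Cl_S(\Reg_kS)$, so that factor vanishes on all of $S$.
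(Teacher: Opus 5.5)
Your cycle (i)$\Rightarrow$(iii)$\Rightarrow$(ii)$\Rightarrow$(i) is the same as the paper's, but the step (iii)$\Rightarrow$(ii) has a genuine gap. You apply (iii) to a single resolution and claim that connectedness of $\wt S$ yields chains of analytic arcs, because piecewise-Nash paths can be ``pushed into the regular part''. That is false. The break points of such a path can be forced onto the non-regular part of $\wt S$, where two top-dimensional pieces touch along a set too thin for any analytic arc to cross. For instance, take $M=\R^2$ and
\[
S=\{x\ge0,\ y\ge0,\ x^3\le y^2\le 4x^3\}\cup\{x\le0,\ y\ge0,\ -x^3\le y^2\le -4x^3\},
\]
two closed horns tangent to the $x$-axis that meet only at the origin. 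Here $\Cl^\Zar(S)=\R^2$ is smooth, so your resolution is trivial and $\wt S=S$ is connected; it even stays connected after blowing up the origin once. An analytic arc through $0$ from one horn to the other would need $\mathrm{ord}_0\,x$ odd and $\mathrm{ord}_0\,y=\frac32\,\mathrm{ord}_0\,x$, which is impossible. So (ii) fails, and indeed the function equal to $x$ on one horn and $0$ on the other is an arc-analytic zero divisor. Hence (iii) for one $\pi$ cannot give (ii).

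The missing idea is the paper's second resolution. Inside the component $\wt E$ of $\wt X$ containing $\wt S$, take the Nash closure $Y$ of $\wt S\setminus\Reg_k\wt S$, resolve a Nash equation of $Y$ to normal crossings by $\sigma$, and apply (iii) again to $\pi\circ\sigma$. Then each component of the regular part of $\wh S$ that approaches a non-regular point $p$ contains open coordinate orthants of a normal-crossing chart centred at $p$. Analytic arcs through $p$ join arbitrary points of open subsets of any two such components, as in \eqref{eq:2} and \eqref{eq:3}. Only at that stage does connectedness of $\wh S$ produce the chains. Pure-dimensionality is also needed here: it guarantees that the exceptional loci, of dimension $<k$, are nowhere dense in $S$, so that (iii) applies to these maps.

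In (i)$\Rightarrow$(iii), the obstacle you flag is not a dimension count, and ``discarding components with small image'' is not justified. Two observations close it:
\begin{itemize}
\item[(a)] $\pi$ is bijective over $M\setminus E$ and $\pi^{-1}(S\setminus E)\subset S'$, so images of distinct components of $S'$ meet only inside $E$; that is, $W\subset E$.
\item[(b)] Each component of $S'$ is open in $S'$, hence meets the dense set $\pi^{-1}(S\setminus E)$, so its image is not contained in $E$.
\end{itemize}
Take $h\in\Aa(M)$ with $\ZZ_M(h)=E$ (Remark~\ref{rem:exc-locus} and Theorem~\ref{thm:ASey-zero-set-thm}). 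Whenever $S'$ has two components, your $f_1,f_2$ are then nonzero, satisfy $f_1f_2=0$, and are arc-analytic by your lifting argument. This shows $S'$ is connected, which gives existence and uniqueness at once, with no dimension hypothesis and no Identity Principle. The paper instead builds a single function vanishing on a top-dimensional component $C$ of $S\setminus E$ and invokes Proposition~\ref{prop:general-ident-princ}.

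Finally, in (ii)$\Rightarrow$(i), the last arc of the chain in (ii) ends at $p^0_{j_t}$, not at $q$. So ``varying $q$'' does not by itself make arc endpoints fill an open subset of $C_j$. State explicitly that you use arcs whose endpoints range over open sets, which is what \eqref{eq:3} actually provides.
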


\begin{proof}
(ii)\,$\Rightarrow\,$(i):
Let $f,g\in\Aa(S)$ be such that $fg\equiv0$. At least one of the semialgebraic sets $\ZZ_S(f)$ and $\ZZ_S(g)$ must be of dimension $k$ (by Lemma~\ref{lem:dim-drop} applied to a connected component of $\Reg_k{S}$). Say, $\dim\ZZ_S(f)=k$. Since $\ZZ_S(f)$ is an arc-symmetric subset of $S$, there exists a connected component $C_i$ of $\Reg_k{S}$ such that $f|_{C_i}=0$. By (ii), it follows that $f|_{C_j}=0$ for every connected component $C_j$ of $\Reg_k{S}$. Then, $f\equiv0$, by continuity of $f$ and since $S=\Cl_S(\Reg_k{S})$.
\smallskip

(i)\,$\Rightarrow\,$(iii): The set $S$ being closed implies that $\pi^{-1}(S)$ is closed in $\wt{M}$ and
\[
S'=\Cl_{\wt{M}}(\pi^{-1}(S\setminus Y))=\Cl_{\pi^{-1}(S)}(\pi^{-1}(S\setminus Y))\,.
\]
Therefore, $S'\subset\pi^{-1}(S)$ and $\pi(\wt{S})\subset S$ for every component $\wt{S}$ of $S'$.

On the other hand, $S\subset\pi(S')$.
Indeed, since the exceptional locus $E$ of $\pi$ is nowhere dense in $S$, we have $\Cl_S(S\setminus E)=S$.
Therefore, as $\pi(S')$ is closed as the image of a closed set by a proper map, we get
\begin{equation}
\label{eq:1}
S=\Cl_S(S\setminus E)=\Cl_M(\pi(\pi^{-1}(S\setminus E)))\cap S=\pi(\Cl_{\wt{M}}(\pi^{-1}(S\setminus E)))\cap S\subset\pi(S')\,.
\end{equation}

Next, let $C$ be a connected component of $S\setminus E$ of dimension $k$. The set $\pi^{-1}(C)$ is connected (since $\pi$ is a homeomorphism over $M\setminus E$), and hence contained in a single connected component $\wt{S}_C$ of $S'$. We claim that then $S\setminus E\subset\pi(\wt{S}_C)$.

Suppose otherwise. Set $\wt{T}=S'\setminus\wt{S}_C$. By~\eqref{eq:1}, $S\setminus C\subset\pi(\wt{T})$. Define a function $g:S'\to\R$ as
\[
g|_{\wt{S}_C}=0\qquad\mathrm{and}\qquad g|_{\wt{T}}=h|_{\wt{T}}\,,
\]
where $h\in\Aa(\wt{M})$ is such that $h^{-1}(0)=\pi^{-1}(E)$. Then, $g\in\Aa(S')$ and $g|_{S'\cap\pi^{-1}(E)}=0$. Consequently, the function $f:S\to\R$ defined as
\[
f(x)=\begin{cases}g\circ\pi^{-1}(x) &\mathrm{if\ }x\in S\setminus E\\ 0 &\mathrm{if\ }x\in S\cap E\end{cases}
\]
is in $\Aa(S)$. To see this, let $\gamma:(-1,1)\to S$ be an arbitrary analytic arc, and let $\wt{\gamma}:(-1,1)\to S'$ be such that $\gamma=\pi\circ\wt{\gamma}$. Then,
\[
f\circ\gamma(t)=g\circ\wt\gamma(t)\qquad\mathrm{for\ all\ }t\in(-1,1)\,.
\]
Indeed, if $\gamma(t)\in E$, then $f(\gamma(t))=0$ by definition, and $\wt\gamma(t)\in\pi^{-1}(E)$, whence $h(\wt\gamma(t))=0$ and so $g(\wt\gamma(t))=0$ as well.
If, in turn, $\gamma(t)\in S\setminus E$, then $f\circ\gamma(t)=g\circ\pi^{-1}\circ\pi\circ\wt\gamma(t)=g\circ\wt\gamma(t)$.

Note that $f|_C=0$, but $f$ is not identically zero on $S$, because $f(x)\neq0$ for $x\in S\setminus(E\cup\pi(\wt{S}_C))$. This contradicts $\Aa(S)$ being a domain, by Proposition~\ref{prop:general-ident-princ}. Thus, $S\setminus E\subset\pi(\wt{S}_C)$.

Note, finally, that $\pi|_{S'}:S'\to S$ is a proper map, and $\wt{S}_C$ is a closed subset of $S'$ (as its connected component). Therefore, $\Cl_S(\pi(\wt{S}_C))=\pi(\wt{S}_C)$, and hence
\[
\pi(\wt{S}_C)=\Cl_S(\pi(\wt{S}_C))\supset\Cl_S(S\setminus E)=S\,,
\]
which completes the proof of this implication.
\smallskip

(iii)\,$\Rightarrow\,$(ii):
Let $X=\Cl^\Na_M(S)$ be the Nash closure of $S$ in $M$, and let $\pi:\wt{M}\to M$ be an embedded desingularization of $X$. By Theorem~\ref{thm:AR-smoothing}, there are connected components $\wt{E}_1,\dots,\wt{E}_t$ of the strict transform $\wt{X}$ of $X$ by $\pi$, such that $\pi(\wt{E}_1\cup\dots\cup\wt{E}_t)\supset\Cl(\Reg_k{X})$, and hence $\pi(\wt{E}_1\cup\dots\cup\wt{E}_t)\supset S$. Let $Z$ denote the exceptional locus of $\pi$. By assumption, a unique connected component $\wt{S}$ of $\Cl_{\pi^{-1}(S)}(\pi^{-1}(S\setminus Z))$ maps onto $S$. Being a connected set, $\wt{S}$ is contained in precisely one of the $\wt{E}_1,\dots,\wt{E}_t$, which we shall henceforth denote by $\wt{E}$.

Let $Y$ be the Nash closure (in $\wt{E}$, and hence in $\wt{M}$) of the semialgebraic set $\wt{S}\setminus\Reg_k{\wt{S}}$. Then, $Y$ is of dimension strictly less than $k$ and it contains the frontiers of all the connected components of $\Reg_k{\wt{S}}$.
Let $h\in\Na(\wt{E})$ be such that $h^{-1}(0)=Y$.
By resolution of singularities, there is a finite composition of blow-ups $\sigma:\widehat{E}\to\wt{E}$ with smooth Nash centres, such that $h\circ\sigma$ has only normal crossings. (Since $\wt{E}$ is a closed Nash submanifold of $\wt{M}$, we can regard $\sigma$ as a composite of blow-ups of $\wt{M}$; i.e., $\sigma:\widehat{M}\to\wt{M}$, where $\widehat{E}$ is the strict transform of $\wt{E}$ in $\widehat{M}$.)

Let $\Sg$ denote the exceptional divisor of $\pi\circ\sigma$.
By assumption, there exists a unique connected component $\wh{S}$ of $\Cl_{(\pi\circ\sigma)^{-1}(S)}((\pi\circ\sigma)^{-1}(S)\setminus\Sg)$ such that $\pi\circ\sigma(\wh{S})=S$. Let $S_1,\dots,S_r$ be all the connected components of $\Reg_k(\wh{S}\setminus\Sg)$. By construction, $\Cl_{\wh{S}}(\Reg_k(\wh{S}\setminus\Sg))=\wh{S}$, and locally at every point $p\in\wh{S}\setminus\Reg_k(\wh{S}\setminus\Sg)$ there is a coordinate chart $U^p$ such that, for every $j=1,\dots,r$, either $S_j\cap U^p=\varnothing$ or else $S_j\cap U^p$ contains a union of open quadrants $\{\lambda_i(x_i)>0:i=1,\dots,k\}$, where $\lambda_i(x)=x$ for all $x\in\R$ or $\lambda_i(x)=-x$ for all $x\in\R$, and $(x_1,\dots,x_k)$ are local coordinates in $U^p$ centered at $p$.

Fix a point $p\in\wh{S}\setminus\Reg_k(\wh{S}\setminus\Sg)$ for which $U^p$ intersects more than one of the $S_1,\dots,S_r$. Let $S_{i_1}$ and $S_{i_2}$ be any two such components, and let $Q_1$ and $Q_2$ be open quadrants (at the origin in $\R^k$) such that $Q_1\cap U^p\subset S_{i_1}$ and $Q_2\cap U^p\subset S_{i_2}$.
Since any point of $Q_1$ can be joined with any point of $Q_2$ by an analytic arc passing through the origin, it follows that
\begin{multline}
\label{eq:2}
\mathrm{there\ exist\ open\ } T_{i_1}\subset S_{i_1},T_{i_2}\subset S_{i_2}\mathrm{\ such\ that\ for\ all\ }q_1\in T_{i_1},q_2\in T_{i_2}\\
\mathrm{there\ is\ an\ analytic\ }\widehat\gamma:[0,1]\to\widehat{S}\mathrm{\ with\ \,}\widehat\gamma(0)=q_1,\widehat\gamma(1)=q_2.
\end{multline}
Note finally that, since $\pi\circ\sigma|_{\widehat{M}\setminus\Sg}$ is a homeomorphism, the index set $I=\{1,\dots,r\}$ partitions into $I=I_1\sqcup\dots\sqcup I_s$, where $I_j=\{i\in I:\pi\circ\sigma(S_i)\subset C_j\}$. We have
\[
C_j=(\bigcup_{i\in I_j}\pi\circ\sigma(\Cl(S_i)))\cap C_j,\qquad\mathrm{for\ all\ }j=1,\dots,s\,.
\]
Moreover, by~\eqref{eq:2},
\begin{multline}
\label{eq:3}
\mathrm{for\ all\ }p_1\in\pi\circ\sigma(T_{i_1})\mathrm{\,\ and\ \,}p_2\in\pi\circ\sigma(T_{i_2})\,,\\
\mathrm{there\ is\ an\ analytic\ }\gamma:[0,1]\to S\mathrm{\ with\ \,}\gamma(0)=p_1,\gamma(1)=p_2.
\end{multline}

To complete the proof, suppose that (ii) does not hold. Then, by~\eqref{eq:3}, there exists a proper index subset $J\subset\{1,\dots,s\}$ such that, for all $j_1\in J$ and $j_2\in\{1,\dots,s\}\setminus J$, we have
\[
\Cl(S_{i_1})\cap\Cl(S_{i_2})\cap\widehat{S}=\varnothing,\quad\mathrm{for\ all\ }i_1\in I_{j_1}\mathrm{\ and\ } i_2\in I_{j_2}\,.
\]
It follows that $\widehat{S}$ is a disjoint union of $\widehat{S}_1$ and $\widehat{S}_2$, where
\[
\widehat{S}_1=(\bigcup_{j\in J}\bigcup_{i\in I_j}\Cl(S_i))\cap\widehat{S}\quad\mathrm{and}\quad
\widehat{S}_2=(\bigcup_{j\notin J}\bigcup_{i\in I_j}\Cl(S_i))\cap\widehat{S}\,.
\]
Since $\dim(\pi\circ\sigma(\widehat{S}_1))\cap C_j<k$ for all $j\notin J$ and $\dim(\pi\circ\sigma(\widehat{S}_2))\cap C_j<k$ for all $j\in J$, then neither $\widehat{S}_1$ nor $\widehat{S}_2$ is mapped onto $S$, which contradicts condition (iii).
\end{proof}

\begin{corollary}
\label{cor:irreducible-dim-drop}
Let $S\subset\R^n$ be an irreducible semialgebraic set of pure dimension. Let $X\subset S$ be $S$-$\AR$-closed. Then, $\dim{X}=\dim{S}$ implies $X=S$.
\end{corollary}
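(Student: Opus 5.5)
The plan is to argue directly from the Arc-Analytic Identity Principle (Proposition~\ref{prop:general-ident-princ}). The corollary does not assume the zero-set property, so Corollary~\ref{cor:irred-implies-dim-drop} cannot be used as stated. Instead we construct an arc-analytic function on $S$ that vanishes exactly on $X$ off a small set. Suppose $X\subsetneq S$ is $S$-$\AR$-closed with $\dim X=\dim S=k$, and fix a point $x_0\in S\setminus X$.

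The first candidate is a gluing function in the spirit of the proof of Proposition~\ref{prop:general-ident-princ}. Put $V=S\setminus X$ and $W=\Cl_S(V)\cap X$. We need $\dim W<k$, which is the key geometric point. Take a Nash manifold $M\supset S$; an open neighbourhood of $S$ in $\R^n$ will do. By Proposition~\ref{prop:alg-closure-dim} and Theorem~\ref{thm:ASey-zero-set-thm}, there is $h\in\Aa(M)$ with $\ZZ_M(h)=\Cl^\AR_M(W)$, a set of dimension $<k$. Define
\[
g_1=h \text{ on } X,\quad g_1=0 \text{ on } V,\qquad g_2=0 \text{ on } X,\quad g_2=h \text{ on } V.
\]
The verification that $g_1,g_2\in\Aa(S)$ is verbatim the one in Proposition~\ref{prop:general-ident-princ}. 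It uses only arc-symmetry of $X$ in $S$: an analytic arc either lies in $X$, or meets $X$ in isolated points and so lies in $V\cup W$, where $h$ vanishes on $W$.

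Then $g_1g_2\equiv0$ and $\Aa(S)$ is a domain. The function $g_1$ is nonzero, since $X$ is $k$-dimensional while $\ZZ_M(h)$ has dimension $<k$. Hence $g_2\equiv0$, that is, $V\subset\ZZ_M(h)$ and $\dim V<k$.

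This is where pure dimensionality enters. $V$ is nonempty and open in $S$, because $X$ is Euclidean closed in $S$ by Lemma~\ref{lem:AR-closed-is-closed}. Since $S$ has pure dimension $k$, every nonempty open subset of $S$ has dimension $k$. This contradicts $\dim V<k$, so $X=S$.

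The main obstacle is justifying $\dim W<k$. I would argue as follows. Otherwise $W$ contains a $k$-dimensional piece of $X$ lying in the closure of $V$. Apply Lemma~\ref{lem:dim-drop} to a connected component $\Gamma$ of $\Reg_k S$ that meets $W$ in a $k$-dimensional set, using the arc-symmetric set $X\cap\Gamma$ in $\Gamma$. Either $\Gamma\subset X$, in which case $\Gamma$ is open in $S$ near generic points and misses $\Cl_S(V)$ there, or $\dim(X\cap\Gamma)<k$, which is a contradiction. Some care is needed at points where $\Reg_k S$ fails to be open in $S$; there I would remove a lower-dimensional bad set, again using pure dimensionality.

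If this step proves delicate, there is an alternative that avoids $W$ altogether. Apply the Identity Principle to $g$ defined as $0$ on $X$ and $h'$ on $V$, where $h'$ vanishes on the $\AR$-closure of the frontier. That argument reduces to the same dimension estimate.
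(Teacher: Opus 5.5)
Your proof is correct, and it takes a different route from the paper's. The paper argues geometrically. Since $\dim X=\dim S$, $X$ contains a nonempty open subset of some connected component $C_i$ of $\Reg S$, hence all of $C_i$ by Lemma~\ref{lem:dim-drop}. The arc-connectivity condition (ii) of Theorem~\ref{thm:irred-criterion}, obtained from irreducibility via (i)$\Rightarrow$(iii)$\Rightarrow$(ii), together with arc-symmetry of $X$, then spreads this to every $C_j$. The paper finishes with Lemma~\ref{lem:dim-drop} and Euclidean closedness of $X$.

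You instead rerun the gluing construction from the proof of Proposition~\ref{prop:general-ident-princ} with $X$ in the role of $U$. This produces a zero divisor in $\Aa(S)$ unless $\dim(S\setminus X)<\dim S$, and you use pure dimensionality only at the last step. Your route gains economy and generality. It bypasses the blow-up characterization and needs only the ambient zero-set theorem (Theorem~\ref{thm:ASey-zero-set-thm}). It also does not need the hypothesis, built into Theorem~\ref{thm:irred-criterion}, that $S$ be closed in a Nash manifold, which the corollary does not assume. For instance, $\{y>0\}\cup\{(0,0)\}\subset\R^2$ is irreducible and pure-dimensional but not even locally closed. Your argument in fact shows $\dim(S\setminus X)<\dim S$ for every irreducible $S$ and every $S$-$\AR$-closed $X$ with $\dim X=\dim S$, pure-dimensional or not. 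The paper's route, in exchange, displays the concrete mechanism that forces $X=S$: analytic arcs linking the regular components.

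Two simplifications. First, the step you single out as the main obstacle is immediate. Since $X\cap V=\varnothing$, you have $W\subset\Cl(V)\setminus V$, and the standard fact $\dim(\Cl(A)\setminus A)<\dim A$ for semialgebraic $A$ gives $\dim W<\dim V\leq k$. The same estimate is asserted without comment for the analogous $W$ in Proposition~\ref{prop:general-ident-princ}. Your detour through components of $\Reg_k S$ also works but is unnecessary. Second, you only need $g_2$: it vanishes on $X$, which has dimension $\dim S$, so Proposition~\ref{prop:general-ident-princ} gives $g_2\equiv0$ directly.
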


\begin{proof}
Let $C_1,\dots,C_s$ be the connected components of $\Reg{S}$. Since $\dim{X}=\dim{S}$, then $X$ contains a nonempty open subset of some $C_i$, and so $C_i\subset X$, by Lemma~\ref{lem:dim-drop}. It follows, by Theorem~\ref{thm:irred-criterion} and arc-symmetry of $X$, that $X$ contains a nonempty open subset of every $C_j$, $j=1,\dots,s$. Hence, $\Reg{S}=\bigcup_jC_j\subset X$, by Lemma~\ref{lem:dim-drop} again, and $S=\Cl_S(\Reg{S})\subset X$, since $X$ is closed in $S$.
\end{proof}

\vspace{2ex}
%%%%%%%%%%%%%%%%%%%%%%%%%%%%%%%%%%%%%%%%%%%%%%%%%%
%%%%%%%%%%%%%%%%% Section %%%%%%%%%%%%%%%%%%%%%%%%
%%%%%%%%%%%%%%%%%%%%%%%%%%%%%%%%%%%%%%%%%%%%%%%%%%

\section{Irreducible components of a semialgebraic set}
\label{sec:irred-comps}

\begin{lemma}
\label{lem:Hadi-4.1-general}
Let $S\subset\R^n$ be a semialgebraic set. Let $E_1,\dots,E_r\subset S$ be irreducible semialgebraic sets such that $S\subset\Cl(E_1\cup\cdots\cup E_r)$.
Then, $\bigcap_{i=1}^r\II_S(E_i)=(0)$.
Moreover, the set of minimal prime ideals in $\Aa(S)$ coincides with the set of minimal elements of $\{\II_S(E_1),\dots,\II_S(E_r)\}$.
\end{lemma}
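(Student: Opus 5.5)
First I show that the intersection is zero. Let $f\in\bigcap_i\II_S(E_i)$. Then $f$ vanishes on $E_1\cup\dots\cup E_r$, and hence on $\Cl(E_1\cup\dots\cup E_r)\cap S$, because $f$ is continuous on $S$ in the Euclidean topology (Proposition~\ref{prop:arcan-AR-continuous}(iii)). Equivalently, $\ZZ_S(f)$ is closed in $S$ by Lemma~\ref{lem:AR-closed-is-closed}, and $E_1\cup\dots\cup E_r\subset S$. Since $S\subset\Cl(E_1\cup\dots\cup E_r)$, we get $S=\Cl_S(E_1\cup\dots\cup E_r)\subset\ZZ_S(f)$. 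Therefore $f\equiv0$.

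Next I check that each $\II_S(E_i)$ is prime. This is the first half of Proposition~\ref{prop:prime-when-irred}: $\Aa(S)/\II_S(E_i)$ embeds into $\Aa(E_i)$ through the restriction map, and $\Aa(E_i)$ is a domain. So $\mathcal P=\{\II_S(E_1),\dots,\II_S(E_r)\}$ is a finite family of primes whose intersection is $(0)$.

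The second assertion then reduces to a standard commutative-algebra fact: if finitely many primes $P_1,\dots,P_r$ in a ring $A$ satisfy $P_1\cap\dots\cap P_r=(0)$, then the minimal primes of $A$ are exactly the minimal elements of $\{P_1,\dots,P_r\}$. I argue it in two steps.

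First, let $\mathfrak q$ be any prime of $\Aa(S)$. Then $P_1\cdots P_r\subset P_1\cap\dots\cap P_r=(0)\subset\mathfrak q$. By primeness, $P_i\subset\mathfrak q$ for some $i$, and we may take that $P_i$ to be minimal in $\mathcal P$. If $\mathfrak q$ is a minimal prime, then $\mathfrak q=P_i$.

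Second, let $P_j$ be minimal in $\mathcal P$, and let $\mathfrak q\subset P_j$ be any prime. By the first step, some $P_i\subset\mathfrak q\subset P_j$. Minimality of $P_j$ in $\mathcal P$ forces $P_i=P_j$, so $\mathfrak q=P_j$ and $P_j$ is a minimal prime.

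Combining the two steps, the minimal primes of $\Aa(S)$ are precisely the minimal elements of $\mathcal P$.

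Nothing here looks like a real obstacle. The one point that needs care is the density step: the hypothesis is about closure in $\R^n$, so I pass to $\Cl_S=\Cl\cap S$ and use Euclidean continuity of arc-analytic semialgebraic functions on the arbitrary set $S$. That continuity is exactly Proposition~\ref{prop:arcan-AR-continuous}(iii).
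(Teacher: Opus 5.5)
Your proof is correct and follows the paper's argument: Euclidean continuity of arc-analytic functions gives $\bigcap_i\II_S(E_i)=(0)$, Proposition~\ref{prop:prime-when-irred} gives primality, and a minimal prime containing the finite intersection must contain, and hence equal, some $\II_S(E_i)$. Your second step, which shows that each minimal element of the family is itself a minimal prime, fills in a direction the paper leaves implicit, and it does so without appealing to Zorn's lemma.
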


\begin{proof}
As $S\subset\Cl(E_1\cup\cdots\cup E_r)$, the continuity of arc-analytic functions implies $\bigcap_{i=1}^r\II_S(E_i)=(0)$.
By Proposition~\ref{prop:prime-when-irred}, each $\II_S(E_i)$ is a prime ideal in $\Aa(S)$.
It remains to show that all the minimal primes in $\Aa(S)$ are contained in the family $\{\II_S(E_1),\dots,\II_S(E_r)\}$. To this end, let $\pp$ be a minimal prime in $\Aa(S)$. Because $\bigcap_{i=1}^r\II_S(E_i)\subset\pp$ and $\pp$ is prime, $\II_S(E_i)\subset\pp$ for some $i$. Thus, $\pp=\II_S(E_i)$ by minimality of $\pp$.
\end{proof}

\begin{corollary}[{\cite[Lem.\,4.1]{S}}]
\label{cor:Hadi-4.1}
If $C_1,\dots,C_r$ are the connected components of the regular locus $\Reg{S}$, then $\bigcap_{i=1}^r\II_S(C_i)=(0)$.
Moreover, the minimal elements of $\{\II_S(C_1),\dots,\II_S(C_r)\}$ coincide with the minimal primes in $\Aa(S)$.
In particular, there are only finitely many minimal primes in $\Aa(S)$.
The same is true if the $C_i$ are replaced by the $S$-$\AR$-irreducible components of $S$.
\end{corollary}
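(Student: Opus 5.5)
The plan is to deduce Corollary~\ref{cor:Hadi-4.1} directly from Lemma~\ref{lem:Hadi-4.1-general}, applied once with $E_i=C_i$ and once with the $E_i$ equal to the $S$-$\AR$-irreducible components of $S$. For each application two hypotheses must be checked: each $E_i$ is an irreducible semialgebraic set, and $S\subset\Cl(E_1\cup\dots\cup E_r)$. Once these hold, Lemma~\ref{lem:Hadi-4.1-general} gives both the vanishing of $\bigcap_i\II_S(E_i)$ and the description of the minimal primes. Finiteness of the set of minimal primes is then automatic, since they form a subset of a finite family.

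For the connected components of the regular locus, I interpret $\Reg{S}$ as the set of points at which $S$ is locally a Nash manifold, of possibly varying dimension. This locus has finitely many connected components $C_1,\dots,C_r$, by semialgebraicity. Each $C_i$ is a connected Nash manifold, so it is irreducible by Corollary~\ref{cor:smooth-connected-is-irred}. The density claim $S\subset\Cl(\Reg{S})$ follows from Proposition~\ref{subordinate-strat}. Take a Nash stratification compatible with $S$. Let $x\in S$, and let $A$ be a stratum of maximal dimension among the strata of $S$ whose closures contain $x$. Then near $x$, the part of $S$ lying in strata of dimension $\ge\dim A$ is open and smooth. Points of $A$ itself near $x$ that are not in the closure of a higher-dimensional stratum are regular points of $S$. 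Points that are in such a closure are limits of points of higher-dimensional strata, and an induction on codimension then shows that every point of $S$ is a limit of regular points. Stated more cleanly: $S\setminus\Reg{S}$ has empty interior in $S$, because otherwise it would contain an open subset of $S$, and that subset would contain a top-dimensional stratum piece, which is locally smooth.

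For the $S$-$\AR$-irreducible components $F_1,\dots,F_s$ of $S$, the union $F_1\cup\dots\cup F_s$ equals $S$, so the closure condition is trivial. Each $F_j$ is irreducible as a semialgebraic set by Proposition~\ref{prop:s-AR-irred-implies-arcan-irred}. Each $F_j$ is also semialgebraic, since it is $S$-$\AR$-closed.

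The only step with real content is the density of $\Reg{S}$ in $S$. The rest is bookkeeping with the earlier results, so I expect no serious obstacle. I would handle the density by the stratification argument above: around any $x\in S$ and any neighbourhood $V$ of $x$, choose a stratum $A\subset S$ meeting $V$, maximal with respect to the frontier partial order, i.e.\ $A$ is contained in the closure of no other stratum of $S$ meeting $V$. By the frontier condition, points of $A\cap V$ then have a neighbourhood in $S$ equal to a neighbourhood in $A$, and are therefore regular.
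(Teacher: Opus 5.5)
Your proposal is correct and is essentially the paper's proof: apply Lemma~\ref{lem:Hadi-4.1-general} once to the $C_i$ (irreducible by Corollary~\ref{cor:smooth-connected-is-irred}, and dense since $S=\Cl_S(\Reg{S})$) and once to the $S$-$\AR$-irreducible components (irreducible by Proposition~\ref{prop:s-AR-irred-implies-arcan-irred}, with union equal to $S$). The only addition is your stratification argument for the density of $\Reg{S}$ in $S$, which the paper takes as standard; your final version of it is the clean one: a stratum of $S$ meeting $V$ that lies in the frontier of no other stratum of $S$ meeting $V$ consists of regular points.
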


\begin{proof}
Both statements follow immediately from Lemma~\ref{lem:Hadi-4.1-general}.
For the first, notice that $S=\Cl_S(\Reg{S})$ and that each $C_i$ is irreducible by Corollary~\ref{cor:smooth-connected-is-irred}.
For the second, recall that $S$ is the union of its $S$-$\AR$-irreducible components and that by Proposition~\ref{prop:s-AR-irred-implies-arcan-irred} each $S$-$\AR$-irreducible component is an irreducible set.
\end{proof}

Following Seyedinejad~\cite[Def.\,4.2]{S}, we define irreducible components of a semialgebraic set as follows.

\begin{definition}
\label{def:irred-comps}
Given a semialgebraic set $S$, let $\pp_1,\dots,\pp_r$ be the minimal prime ideals in \,$\Aa(S)$. Put $S_i=\ZZ_S(\pp_i)$ for every $i$. The $\{S_1,\dots,S_r\}$ are called the \emph{irreducible components} of $S$.
\end{definition}

\begin{remark}
\label{rem:if-AR-closed-same-irred}
In the case that $S$ is an $\AR$-closed set in a Nash manifold, the family of irreducible components of $S$ as defined above coincides with the family of its $\AR$-irreducible components. This follows from~\cite[Prop.\,2]{ASe1}, which states that in an $\AR$-closed set $S$ the $\AR$-irreducible components are precisely the zero-sets of the minimal primes in $\Aa(S)$.
\end{remark}

We establish now some basic properties of irreducible components.

\begin{proposition}
\label{prop:irred-comps-no-inclusions}
Let $S$ be a semialgebraic set and let $\pp_1,\dots,\pp_r$ be all the (distinct) minimal prime ideals of $\Aa(S)$. Let $S_1,\dots,S_r$ be the corresponding irreducible components of $S$. Then, for all $i,j\in\{1,\dots,r\}$, $i\neq j$ implies $S_i\not\subset S_j$.
\end{proposition}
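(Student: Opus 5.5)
Suppose $i\neq j$ and $S_i\subset S_j$; we derive a contradiction. The plan is to show that $\II_S(S_k)=\pp_k$ for every $k$. Once this holds, $S_i\subset S_j$ gives $\pp_j=\II_S(S_j)\subset\II_S(S_i)=\pp_i$. Minimality of $\pp_i$ then forces $\pp_i=\pp_j$, contradicting distinctness.

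To prove $\II_S(S_k)=\pp_k$, we use Corollary~\ref{cor:Hadi-4.1}. Let $C_1,\dots,C_m$ be the connected components of $\Reg{S}$. Each minimal prime $\pp_k$ then equals $\II_S(C_l)$ for some $l=l(k)$.

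We show first that $\ZZ_S(\II_S(C_l))=\Cl_S^{\AR}(C_l)$ up to the easy inclusion. Clearly $C_l\subset\ZZ_S(\II_S(C_l))=S_k$. Hence $\II_S(S_k)\subset\II_S(C_l)=\pp_k$. Conversely, every $f\in\pp_k$ vanishes on $\ZZ_S(\pp_k)=S_k$ by the definition of the zero set. So $\pp_k\subset\II_S(S_k)$, and equality holds.

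Only this general fact $\pp\subset\II_S(\ZZ_S(\pp))$ is needed, together with $\II_S(S_k)\subset\pp_k$. The latter comes from $S_k$ containing a set whose vanishing ideal is exactly $\pp_k$. That containment is the one point requiring care, and Corollary~\ref{cor:Hadi-4.1} provides it by realizing each minimal prime as $\II_S(C_l)$.

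With $\II_S(S_k)=\pp_k$ established, the argument of the first paragraph finishes the proof. No zero-set or extension property of $S$ is needed.
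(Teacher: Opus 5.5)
Your proof is correct and is essentially the paper's argument. You use Corollary~\ref{cor:Hadi-4.1} to write $\pp_i=\II_S(C_i)$ with $C_i\subset S_i$, so $\II_S(S_i)\subset\pp_i$; you combine this with $\pp_j\subset\II_S(\ZZ_S(\pp_j))=\II_S(S_j)$ to get $\pp_j\subset\pp_i$, and conclude $\pp_i=\pp_j$ by minimality (proving the full equality $\II_S(S_k)=\pp_k$ is a harmless strengthening). One sentence should go: the claim $\ZZ_S(\II_S(C_l))=\Cl^{\AR}_S(C_l)$ is neither proved nor used, and without the zero-set property only the inclusion $\supset$ is automatic, so simply delete it.
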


\begin{proof}
Suppose $S_i\subset S_j$ for some $i,j\in\{1,\dots,r\}$. Then,
\[
\II_S(S_i)\supset\II_S(S_j)=\II_S(\ZZ_S(\pp_j))\supset\pp_j\,.
\]
On the other hand, let $C_i$ be a connected component of $\Reg{S}$, for which $\pp_i=\II_S(C_i)$. Then, $C_i\subset S_i$, and hence $\II_S(S_i)\subset\II_S(C_i)=\pp_i$.
It follows that $\pp_i\supset\pp_j$, whence $i=j$, by minimality of $\pp_i$.
\end{proof}

\begin{proposition}
\label{prop:irred-iff-one-irred-comp}
A semialgebraic set $S$ is irreducible if and only if $S$ has only one irreducible component.
\end{proposition}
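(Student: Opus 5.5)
Both directions reduce to the ring-theoretic fact that $\Aa(S)$ is reduced, combined with Corollary~\ref{cor:Hadi-4.1}. First I would note that $\Aa(S)$ has no nonzero nilpotents. Indeed, if $f^m=0$ then $f(x)^m=0$ for every $x\in S$, so $f\equiv0$. Consequently, the nilradical of $\Aa(S)$ is $(0)$, and it equals the intersection $\pp_1\cap\dots\cap\pp_r$ of all minimal primes. There are finitely many of these by Corollary~\ref{cor:Hadi-4.1}.

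For the forward implication, assume $S$ is irreducible, i.e., $\Aa(S)$ is an integral domain. Then $(0)$ is a prime ideal, and it is contained in every prime. Hence $(0)$ is the unique minimal prime, so $r=1$ and $S$ has exactly one irreducible component, namely $S_1=\ZZ_S((0))=S$.

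For the converse, assume $S$ has only one irreducible component $S_1=\ZZ_S(\pp_1)$. Here I need care, since Definition~\ref{def:irred-comps} counts components as zero-sets, and two distinct minimal primes could a priori have the same zero-set. This is exactly ruled out by Proposition~\ref{prop:irred-comps-no-inclusions}. If $\pp_i\neq\pp_j$, then $S_i\not\subset S_j$, so in particular $S_i\neq S_j$. Therefore one irreducible component means exactly one minimal prime $\pp_1$. Then $\pp_1=\bigcap_i\pp_i=\mathrm{nil}(\Aa(S))=(0)$, so $(0)$ is prime and $\Aa(S)$ is a domain. Equivalently, one could use Corollary~\ref{cor:Hadi-4.1} directly: the minimal elements of $\{\II_S(C_i)\}$ form the single ideal $\pp_1$, and $\bigcap_i\II_S(C_i)=(0)$ with every $\II_S(C_i)\supset\pp_1$, so $(0)\supset\pp_1$.

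The only subtle point is the passage from ``one component'' (a statement about sets) to ``one minimal prime'' (a statement about ideals). I would handle it by citing Proposition~\ref{prop:irred-comps-no-inclusions} as above. The remaining steps are standard commutative algebra: a reduced ring with a unique minimal prime is a domain.
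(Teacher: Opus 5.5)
Your proof is correct and follows essentially the same route as the paper. Like the paper, you use Proposition~\ref{prop:irred-comps-no-inclusions} to identify ``one irreducible component'' with ``one minimal prime'', and then the fact that $(0)=\bigcap_i\pp_i$ to conclude that this holds exactly when $(0)$ is prime. The only difference is that you explicitly justify $(0)=\bigcap_i\pp_i$ via reducedness of $\Aa(S)$, a step the paper simply asserts.
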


\begin{proof}
By Proposition~\ref{prop:irred-comps-no-inclusions}, $S$ has only one irreducible component precisely when there is only one minimal prime ideal $\pp$ in $\Aa(S)$. Since the zero ideal $(0)$ is equal to the intersection of all the minimal primes, it follows that $S$ has only one irreducible component if and only if $(0)$ is prime, that is, $\Aa(S)$ is a domain.
\end{proof}

\begin{proposition}[cf.~{\cite[Prop.\,4.3]{S}}]
\label{prop:irred-comps-basics}
Let $S\subset\R^n$ be a semialgebraic set and let $S_1,\dots,S_r$ be its irreducible components. We have the following:
\begin{itemize}
\item[(i)] $S=S_1\cup\cdots\cup S_r$
\item[(ii)] If $Z$ is an irreducible semialgebraic subset of $S$, then $Z\subset S_i$ for some $i$
\item[(iii)] If $Z$ is an irreducible semialgebraic subset of $S$ such that $Z\supset S_i$ for some $i$, then $Z=S_i$.
\end{itemize}
Moreover, if $S$ has the zero-set property, we have also:
\begin{itemize}
\item[(iv)] The irreducible components and $S$-$\AR$-irreducible components of $S$ coincide
\item[(v)] Each $S_i$ is an irreducible semialgebraic set.
\end{itemize}
\end{proposition}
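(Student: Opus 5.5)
We prove the five items in order, using only the correspondence between minimal primes and irreducible subsets from Lemma~\ref{lem:Hadi-4.1-general} and Corollary~\ref{cor:Hadi-4.1}.

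\emph{Item (i).} The zero ideal is the intersection of the minimal primes: by Corollary~\ref{cor:Hadi-4.1}, $(0)=\bigcap_i\II_S(C_i)$, and every $\II_S(C_i)$ contains some minimal prime. Now take $x\in S$ and let $m_x$ be the ideal of functions in $\Aa(S)$ vanishing at $x$; it is a maximal ideal. Since $m_x$ is prime, it contains some minimal prime $\pp_i$. Then every element of $\pp_i$ vanishes at $x$, so $x\in\ZZ_S(\pp_i)=S_i$. This gives $S=S_1\cup\dots\cup S_r$.

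\emph{Item (ii).} If $Z\subset S$ is irreducible, then $\II_S(Z)$ is prime by Proposition~\ref{prop:prime-when-irred}. It contains $(0)=\pp_1\cap\dots\cap\pp_r$, so by primality $\pp_i\subset\II_S(Z)$ for some $i$. Hence $Z\subset\ZZ_S(\II_S(Z))\subset\ZZ_S(\pp_i)=S_i$.

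\emph{Item (iii).} Let $Z$ be irreducible with $Z\supset S_i$. By (ii), $Z\subset S_j$ for some $j$, so $S_i\subset S_j$. Proposition~\ref{prop:irred-comps-no-inclusions} then forces $i=j$, and therefore $Z=S_i$.

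\emph{Items (iv) and (v), assuming the ZSP.}
\begin{itemize}
\item Let $X_1,\dots,X_t$ be the $S$-$\AR$-irreducible components of $S$. Each $X_k$ is irreducible (Proposition~\ref{prop:s-AR-irred-implies-arcan-irred}), so $\II_S(X_k)$ is prime. By the last sentence of Corollary~\ref{cor:Hadi-4.1}, the minimal primes are exactly the minimal elements of $\{\II_S(X_1),\dots,\II_S(X_t)\}$.
\item We claim that under the ZSP these ideals are pairwise incomparable. Suppose $\II_S(X_k)\subset\II_S(X_l)$ with $k\neq l$. By the ZSP, choose $f\in\Aa(S)$ with $\ZZ_S(f)=X_k$. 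Then $f\in\II_S(X_k)\subset\II_S(X_l)$, so $X_l\subset X_k$. This contradicts non-redundancy of the decomposition.
\item Since the $\II_S(X_k)$ are pairwise incomparable, each of them is a minimal prime, say $\pp_k=\II_S(X_k)$. The ZSP gives $\ZZ_S(\II_S(X_k))=X_k$, because $X_k=\ZZ_S(f)$ for some $f\in\II_S(X_k)$. Hence $S_k=X_k$, which proves (iv).
\item For (v): each $S_k=X_k$ is $S$-$\AR$-irreducible, hence irreducible by Proposition~\ref{prop:s-AR-irred-implies-arcan-irred}.
\end{itemize}

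The main obstacle is (iv), specifically showing that the minimal primes are exactly the ideals $\II_S(X_k)$ and that their zero sets recover the $X_k$. Without the ZSP, distinct $S$-$\AR$-components could have comparable vanishing ideals, and $\ZZ_S(\II_S(X))$ could be strictly larger than $X$. The ZSP is precisely what rules out both problems.
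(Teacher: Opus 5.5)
Your proof is correct and essentially follows the paper: items (ii), (iv) and (v) match its argument, and in (iv) you usefully make explicit the step $\ZZ_S(\II_S(X_k))=X_k$, which the paper leaves implicit after ``it suffices to show''. The only variations are in (i) and (iii): for (i) you argue pointwise via the prime ideal of functions vanishing at $x$ rather than via $\ZZ_S(\bigcap_i\pp_i)=\bigcup_i\ZZ_S(\pp_i)$, and for (iii) you combine (ii) with Proposition~\ref{prop:irred-comps-no-inclusions} instead of showing $\II_S(S_i)=\pp_i$ directly and invoking minimality of $\pp_i$; both shortcuts are valid and rest on the same facts.
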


\begin{proof}
Let $\pp_1,\dots,\pp_r$ be the underlying minimal primes of $S_1,\dots,S_r$ respectively. Then,
\[
S=\ZZ_S(0)=\ZZ_S(\bigcap_{i=1}^r\pp_i)=\bigcup_{i=1}^r\ZZ_S(\pp_i)=\bigcup_{i=1}^r S_i\,,
\]
which proves (i).

For (ii), by Proposition~\ref{prop:prime-when-irred}, the vanishing ideal $\II_S(Z)$ is prime and hence $\pp_i\subset\II_S(Z)$ for some~$i$. Thus,
\[
Z \subset\ZZ_S(\II_S(Z))\subset\ZZ_S(\pp_i)=S_i\,.
\]
Next, let $Z$ and $S_i$ be as stated in (iii). By Corollary~\ref{cor:Hadi-4.1}, there is a connected component $C_i$ of $\Reg{S}$ such that $\II_S(C_i)=\pp_i$. Hence,
\[
\II_S(\ZZ_S(\pp_i))=\II_S(\ZZ_S(\II_S(C_i)))=\II_S(C_i)=\pp_i\,,
\]
and so $\II_S(S_i)=\pp_i$.
Therefore, $Z\supset S_i$ implies $\II_S(Z)\subset\pp_i$. But $\II_S(Z)$ is prime, so $\II_S(Z)=\pp_i$ by minimality of $\pp_i$.
This, in turn, implies that $Z\subset\ZZ_S(\pp_i)=S_i$, and thus $Z=S_i$, proving (iii).

For (iv), let $X_1,\dots,X_t$ be the $S$-$\AR$-irreducible components of $S$. By Corollary~\ref{cor:Hadi-4.1}, it suffices to show that $\II_S(X_i)\subset\II_S(X_j)$ implies $X_i=X_j$. Indeed, using the zero-set property, we find a function $f_i\in\Aa(S)$ such that $\ZZ_S(f_i)=X_i$. Then, $f_i\in\II_S(X_i)\subset\II_S(X_j)$, so $X_j\subset\ZZ_S(\II_S(X_j))\subset\ZZ_S(f_i)=X_i$. Hence, $X_i=X_j$ by minimality of the decomposition into $S$-$\AR$-irreducible components. This proves~(iv).
Lastly, by Proposition~\ref{prop:s-AR-irred-implies-arcan-irred}, the $S$-$\AR$-irreducible components of $S$ are irreducible, proving (v).
\end{proof}

\begin{question}
\label{question:are-irred-comps-irred}
Are the irreducible components of every semialgebraic set (not having the ZSP) necessarily irreducible in the sense of Definition~\ref{def:arcan-irreducible}?
\end{question}

\begin{corollary}
\label{cor:irred-comps-are-max-irreds}
Let $S\subset\R^n$ be a semialgebraic set satisfying the zero-set property. Then, the maximal irreducible subsets of $S$ (with respect to inclusion) exist and are precisely the irreducible components of $S$.
\end{corollary}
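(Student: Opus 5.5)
The plan is to combine parts (ii), (iii) and (v) of Proposition~\ref{prop:irred-comps-basics}, all of which are available because $S$ has the zero-set property. There are two things to prove. First, each irreducible component $S_i$ is an irreducible subset of $S$ that is maximal with respect to inclusion. Second, every maximal irreducible subset of $S$ is one of the $S_i$. The second point also settles existence: every irreducible subset is contained in some $S_i$, so maximal elements exist and there are only finitely many of them.

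For the first point, Proposition~\ref{prop:irred-comps-basics}(v) already tells us that each $S_i$ is irreducible, since that part uses the ZSP. To show maximality, take an irreducible semialgebraic $Z\subset S$ with $Z\supset S_i$. Part (iii) of the same proposition then gives $Z=S_i$, so nothing irreducible strictly contains $S_i$.

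For the second point, let $Z$ be a maximal irreducible subset of $S$. By Proposition~\ref{prop:irred-comps-basics}(ii), $Z\subset S_i$ for some $i$. Since $S_i$ is itself irreducible, maximality of $Z$ forces $Z=S_i$. By the same argument, any irreducible $Z$ lies inside some $S_i$, which is maximal by the first point, so every irreducible subset is contained in a maximal one.

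The only point requiring care is that irreducibility of the $S_i$ (part (v)) is exactly where the ZSP enters. Without it, the argument would run into Question~\ref{question:are-irred-comps-irred}. Everything else is a formal consequence of (ii) and (iii), so I expect no real obstacle.
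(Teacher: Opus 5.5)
Your proposal is correct and follows the paper's proof, which cites only parts (ii) and (v) of Proposition~\ref{prop:irred-comps-basics}. Your explicit use of (iii) to show that no irreducible set strictly contains some $S_i$ supplies the step the paper leaves implicit; one could equally get it from (ii) together with Proposition~\ref{prop:irred-comps-no-inclusions}.
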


\begin{proof}
This follows immediately from Proposition~\ref{prop:irred-comps-basics}(ii) and (v).
\end{proof}

\begin{question}
\label{question:are-there-always-max-irreds}
Does every semialgebraic set (not having the ZSP) necessarily have maximal irreducible subsets?
\end{question}

\begin{corollary}
\label{cor:ZSP-implies-irred-iff-AR-irred}
Let $S\subset\R^n$ be a semialgebraic set satisfying the zero-set property. Then, $S$ is $S$-$\AR$-irreducible if and only if $S$ is an irreducible semialgebraic set.
\end{corollary}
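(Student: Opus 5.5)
One direction needs no hypothesis on $S$. If $S$ is $S$-$\AR$-irreducible, then Proposition~\ref{prop:s-AR-irred-implies-arcan-irred} applies with $X=S$. Indeed, $S$ is trivially $S$-$\AR$-closed, since every arc into $S$ lies in $S$. So $\Aa(S)$ is a domain, i.e.\ $S$ is irreducible in the sense of Definition~\ref{def:arcan-irreducible}.

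For the converse I would use the zero-set property in the same way as in Lemma~\ref{lem:zsp-irred-implies-AR-irred}. Assume $S$ is irreducible and write $S=X_1\cup\dots\cup X_t$ as the decomposition into $S$-$\AR$-irreducible components.
\begin{itemize}
\item By the ZSP, choose $f_i\in\Aa(S)$ with $\ZZ_S(f_i)=X_i$ for each $i$.
\item The product $f_1\cdots f_t$ vanishes on $S$, so it is $0$ in $\Aa(S)$.
\item Since $\Aa(S)$ is a domain, some $f_j\equiv0$, hence $S=\ZZ_S(f_j)=X_j$.
\end{itemize}
Thus $S$ has a single $S$-$\AR$-irreducible component, i.e.\ $S$ is $S$-$\AR$-irreducible.

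Alternatively, one can apply Lemma~\ref{lem:zsp-irred-implies-AR-irred} directly with $E=S$, noting that $\Cl^\AR_S(S)=S$. One could also argue via Proposition~\ref{prop:irred-iff-one-irred-comp} together with Proposition~\ref{prop:irred-comps-basics}(iv): irreducibility is equivalent to having one irreducible component, and under the ZSP these components coincide with the $S$-$\AR$-irreducible ones. There is no real obstacle here; the only care needed is to confirm that $S$ counts as $S$-$\AR$-closed, which it does.
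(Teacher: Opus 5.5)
Your proof is correct. The forward direction is the paper's (Proposition~\ref{prop:s-AR-irred-implies-arcan-irred} with $X=S$); the converse takes a more direct route.

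The paper argues the converse by contraposition through the component theory of Section~\ref{sec:irred-comps}. If $S$ has two distinct $S$-$\AR$-irreducible components, Proposition~\ref{prop:irred-comps-basics}(iv) makes them two distinct irreducible components, i.e.\ zero-sets of distinct minimal primes. Proposition~\ref{prop:irred-iff-one-irred-comp} then shows that $\Aa(S)$ is not a domain.

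Your argument instead multiplies the ZSP defining functions of the $S$-$\AR$-irreducible components and uses that $\Aa(S)$ is a domain. As you note, this is exactly Lemma~\ref{lem:zsp-irred-implies-AR-irred} applied with $E=S$. It needs only the zero-set property and the Noetherian decomposition. It bypasses minimal primes, Corollary~\ref{cor:Hadi-4.1} and Proposition~\ref{prop:irred-comps-no-inclusions}, so the corollary could have been stated right after Lemma~\ref{lem:zsp-irred-implies-AR-irred}.

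The paper's route buys a different framing: the statement appears as a consequence of identifying irreducible components with $S$-$\AR$-irreducible components, which is the viewpoint of that section, at the price of more machinery. Your converse implicitly assumes $S\neq\varnothing$ so that the decomposition exists. This is automatic, since $\Aa(\varnothing)$ is the zero ring and hence not a domain.
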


\begin{proof}
The forward implication is always true, by Proposition~\ref{prop:s-AR-irred-implies-arcan-irred}.
Conversely, suppose that $S$ is not $S$-$\AR$-irreducible. Then, $S$ has at least two distinct $S$-$\AR$-irreducible components, $S_1\neq S_2$. By Proposition~\ref{prop:irred-comps-basics}(iv), $S_1$, $S_2$ are among the irreducible components of $S$. Therefore, $S$ is not irreducible, by Proposition~\ref{prop:irred-iff-one-irred-comp}.
\end{proof}

As an application of the above results, let us now work out the decomposition of a semialgebraic set in two examples.

\begin{example}[{\cite[Ex.\,4.6]{S}}]
\label{ex:nodal-cubic-irred-decomp}
Let $S$ be the semialgebraic subset of $\R^2$ from Example~\ref{ex:1reducible}, defined by the formula
\[
y^2 = x(x-1)^2 \;\land\; (y > 0 \;\lor\; 0 < x \leq 1)\,.
\]
As a set of dimension one, $S$ has the arc-analytic extension property (see Corollary~\ref{cor:1-dim-has-AEP} below), and hence $S$ also has the zero-set property (Proposition~\ref{prop:AEP-implies-ZSP} below).

Taking a clue from the factorization
\[
y^2-x(x-1)^2 = (y-\sqrt{x}(x-1))(y+\sqrt{x}(x-1))\,,
\]
let $S_1$ and $S_2$ be the zero loci in $S$ of $y-\sqrt{x}(x-1)$ and $y+\sqrt{x}(x-1)$, respectively.
As traces of analytic sets from the right half-plane $H=\{(x,y)\in\R^2:x>0\}$, both $S_1$ and $S_2$ are arc-symmetric subsets of $S$, and clearly, both are $S$-$\AR$-irreducible. Thus, as $S=S_1\cup S_2$ and neither $S_1$ nor $S_2$ is contained in the other, it follows that $\{S_1, S_2\}$ are the $S$-$\AR$-irreducible components of $S$. Hence, by Proposition~\ref{prop:irred-comps-basics}(iv), $S_1\cup S_2$ is the decomposition of $S$ into irreducible components.
\end{example}

\begin{example}
\label{ex: Whitney-in-a-ball-irred-decomp}
Let $S$ be the semialgebraic subset of $\R^3$ from Example~\ref{ex:2reducible}, defined by the formula
\[
x^2=zy^2 \;\land\; x^2+(y-1)^2+(z+1)^2\leq2\,,
\]
so that $S$ is the trace of the Whitney umbrella in the closed ball with radius $\sqrt{2}$ centered at $(0,1,-1)$ in $\R^3$.
Let $S^{(1)}$ and $S^{(2)}$ denote the one-dimensional and the two-dimensional locus of $S$, respectively. We claim that $S_1=S^{(1)}\cup\{(0,0,0)\}$ and $S_2=S^{(2)}$ are the irreducible components of $S$.

To prove the claim, consider functions $f,g:S\to\R$ defined as $f(x,y,z)=x^2+y^2$, and
\[
g(x,y,z)=\begin{cases}0 & \mathrm{if\ }z\geq0\\ z & \mathrm{if\ }z<0\,.\end{cases}
\]
We have $f,g\in\Aa(S)$. The case for $f$ is obvious, and $g$ is arc-analytic since it is analytic along every arc contained in either $S^{(1)}\cup\{(0,0,0)\}$ or in $S^{(2)}$, and there are no other analytic arcs in $S$. Further, we have $\ZZ_S(f)=S_1$ and $\ZZ_S(g)=S_2$.

Consider now the sets $S'_1=S^{(1)}$ and $S'_2=S^{(2)}\cap\{x^2+(y-1)^2+(z+1)^2<2\}$. 
As connected smooth semialgebraic sets, $S'_1$ and $S'_2$ are irreducible (Corollary~\ref{cor:smooth-connected-is-irred}). Therefore, $S_1=\Cl_S(S'_1)$ and $S_2=\Cl_S(S'_2)$ are irreducible, by Lemma~\ref{lem:irred-implies-closure-irred} below.
As the zero-loci of $f$ and $g$, respectively, $S_1$ and $S_2$ are maximal irreducible subsets of $S$. Indeed, for if $S_i\subsetneq T$ ($i=1$ or $2$) for some semialgebraic set $T\subset S$, then $f|_T\!\cdot g|_T=0$ while $f|_T\neq0$ and $g|_T\neq0$, meaning that $T$ is not irreducible.

Lastly, note that, as a basic closed semialgebraic set, $S$ has the arc-analytic extension property (see Theorem~\ref{thm:basic-closed-has-AEP} below), and hence also the zero-set property (Proposition~\ref{prop:AEP-implies-ZSP} below). It thus follows by Corollary~\ref{cor:irred-comps-are-max-irreds} that $S_1$ and $S_2$ are the irreducible components of $S$.
\end{example}

\begin{lemma}
\label{lem:irred-implies-closure-irred}
Let $S\subset\R^n$ be a semialgebraic set, and let $T$ be an arbitrary semialgebraic set satisfying $S\subset T\subset\Cl(S)$.
Suppose that $S$ is irreducible. Then, $T$ is irreducible as well.
\end{lemma}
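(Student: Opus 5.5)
The plan is to show directly that $\Aa(T)$ is an integral domain. Take $f,g\in\Aa(T)$ with $fg\equiv0$ on $T$. The restrictions $f|_S,g|_S$ are semialgebraic, and they are arc-analytic on $S$, because every analytic arc into $S$ is also an analytic arc into $T$. So $f|_S,g|_S\in\Aa(S)$, and $f|_S\cdot g|_S\equiv0$. Irreducibility of $S$ then gives $f|_S\equiv0$ or $g|_S\equiv0$; say $f|_S\equiv0$.

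The remaining step is to pass from $f|_S\equiv0$ to $f\equiv0$ on $T$. By Proposition~\ref{prop:arcan-AR-continuous}(iii), $f$ is continuous in the Euclidean topology on $T$. Since $S\subset T\subset\Cl(S)$, the set $S$ is dense in $T$, so $f$ vanishes on $\Cl_T(S)=T$. Hence $f\equiv0$, and $\Aa(T)$ is a domain.

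An alternative would be to use $\ZZ_T(f)$: by Lemma~\ref{lem:AR-closed-is-closed} it is closed in $T$, and it contains $S$. The continuity argument above is more direct, though.

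The only point needing care is the continuity of $f$ on $T$ itself, rather than merely on $S$. This is exactly the content of Proposition~\ref{prop:arcan-AR-continuous}(iii), which holds for an arbitrary semialgebraic set; it uses the Curve Selection Lemma to produce arcs in $T$ approaching points of $\Cl(S)\setminus S$. I therefore expect no real obstacle. The argument is also insensitive to $T$ failing to be locally closed, since only density and continuity are used.
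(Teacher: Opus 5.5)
Your proof is correct and follows essentially the same route as the paper's. The paper restricts $f_1,f_2$ to $S$ and applies irreducibility of $S$. It then uses Euclidean continuity of arc-analytic functions (Proposition~\ref{prop:arcan-AR-continuous}(iii)) to get $\ZZ_T(f_1)=\Cl(\ZZ_T(f_1))\cap T\supset\Cl(S)\cap T=T$, which is your density argument phrased via the zero-set.
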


\begin{proof}
Let $T$ be as above, and let $f_1,f_2\in\Aa(T)$ be such that $f_1f_2=0$. Then, $f_1|_S\cdot f_2|_S=0$, hence by irreducibility of $S$ one of the functions vanishes on $S$; say, $f_1|_S=0$. Since $f_1$ is continuous on $T$, its zero-set is a closed subset of $T$. Thus,
\[
\ZZ_T(f_1)=\Cl(\ZZ_T(f_1))\cap T\supset\Cl(S)\cap T=T\,,
\]
and so $f_1=0$, proving that $\Aa(T)$ is a domain.
\end{proof}

\vspace{2ex}
%%%%%%%%%%%%%%%%%%%%%%%%%%%%%%%%%%%%%%%%%%%%%%%%%%
%%%%%%%%%%%%%%%%% Section %%%%%%%%%%%%%%%%%%%%%%%%
%%%%%%%%%%%%%%%%%%%%%%%%%%%%%%%%%%%%%%%%%%%%%%%%%%

\section{Generalized arc-analytic Nullstellensatz}
\label{sec:arcan-null}

Kurdyka's weak Nullstellensatz (\cite[Prop.\,6.5]{Kur}) generalizes easily to our setting. We include the proof for the reader's convenience.

\begin{proposition}
\label{prop:weak-nullstellensatz}
Let $S$ be a locally closed semialgebraic subset of $\R^n$, and suppose $f,g\in\Aa(S)$ are such that $\ZZ_S(f)\subset\ZZ_S(g)$.
Then, there exist $h\in\Aa(S)$ and a positive integer $k$ such that $g^k=fh$.
\end{proposition}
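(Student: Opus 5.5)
The plan follows Kurdyka's argument: use the {\L}ojasiewicz inequality to find $k$ with $|g|^k\le C|f|$ locally, and then define $h=g^k/f$ off $\ZZ_S(f)$ and $h=0$ on $\ZZ_S(f)$. Local closedness is exactly what makes a {\L}ojasiewicz inequality available, and arc-analyticity of $h$ will be checked arc by arc.

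\textbf{Step 1 (reduction to a closed set).} Since $S$ is locally closed, $S$ is closed in some open semialgebraic $U\subset\R^n$. For instance, take $U=\R^n\setminus(\Cl(S)\setminus S)$, which is open because $\Cl(S)\setminus S$ is closed for locally closed $S$. By Proposition~\ref{prop:arcan-AR-continuous}(iii), $f$ and $g$ are continuous semialgebraic functions on $S$, which is closed in $U$. I would then apply the semialgebraic {\L}ojasiewicz inequality on the locally compact set $S$ (cf.~\cite[Thm.\,2.6.6]{BCR} or its variant with a semialgebraic multiplier). It gives a positive integer $N$ and a continuous semialgebraic function $c:S\to(0,\infty)$ with $|g|^N\le c\,|f|$ on $S$; here one uses $\ZZ_S(f)\subset\ZZ_S(g)$. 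If the reference version requires compactness, I would exhaust $S$ by the compact sets $K_r=\{x\in S:|x|\le r,\ \mathrm{dist}(x,\R^n\setminus U)\ge1/r\}$. The exponent may depend on $r$, but semialgebraicity lets one use a single exponent: apply {\L}ojasiewicz to the semialgebraic function $\varphi(r)=\sup_{K_r}\dots$, or compose with a proper semialgebraic change of variables making $S$ closed in some $\R^m$.

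\textbf{Step 2 (definition of $h$).} I would set $k=N+1$ and define $h=g^k/f$ on $S\setminus\ZZ_S(f)$ and $h=0$ on $\ZZ_S(f)$. Then $h$ is semialgebraic, $g^k=fh$ everywhere, and $|h|\le c|g|$, so $h$ is continuous. It remains to check that $h\circ\gamma$ is analytic for every analytic arc $\gamma:(-1,1)\to S$. If $f\circ\gamma\equiv0$, then $h\circ\gamma\equiv0$. Otherwise $f\circ\gamma$ has isolated zeros, and near each zero $t_0$ we write $f\circ\gamma=(t-t_0)^a u$ and $g\circ\gamma=(t-t_0)^b v$ with $u(t_0)\ne0$, and with $b=\infty$ allowed, in which case $g\circ\gamma\equiv0$ and there is nothing to check. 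The inequality $|g\circ\gamma|^N\le c(\gamma(t))|f\circ\gamma|$, with $c\circ\gamma$ bounded near $t_0$, forces $Nb\ge a$. Hence $kb\ge a$, so the quotient $(g\circ\gamma)^k/(f\circ\gamma)$ extends analytically across $t_0$. Since this extension agrees with $h\circ\gamma$ on a punctured neighbourhood and both are continuous, $h\circ\gamma$ is analytic.

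\textbf{Main obstacle.} The main obstacle is Step 1: obtaining a global exponent $N$ on a noncompact, merely locally closed $S$. This is exactly where local closedness enters. I would first try the cleanest route: take a semialgebraic homeomorphism of $U$ onto a closed semialgebraic subset of $\R^{n+1}$, for example via $x\mapsto(x,1/\mathrm{dist}(x,\R^n\setminus U))$. This makes $S$ closed in $\R^{n+1}$, and then the standard global {\L}ojasiewicz inequality for continuous semialgebraic functions on closed sets, $|g|^N\le c|f|$ with $c$ continuous positive semialgebraic, applies directly. The analytic step above only needs $c\circ\gamma$ to be locally bounded, which holds by continuity.
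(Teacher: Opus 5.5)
Your proposal is correct and follows essentially the same route as the paper: the paper invokes \cite[Thm.\,2.6.6]{BCR} to obtain $g^{k_1}=fh_1$ with $h_1$ continuous semialgebraic, and then takes $h=gh_1$ and $k=k_1+1$. That theorem is already stated for locally closed sets, so the workarounds in your Step~1 are unnecessary; the resulting $h$ coincides with your $g^{N+1}/f$ extended by zero on $\ZZ_S(f)$, and the paper checks arc-analyticity by the same removable-singularity argument along arcs, with the extra factor of $g$ doing the same job as in your argument.
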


\begin{proof}
Since $f,g$ are continuous semialgebraic functions on a locally closed set, it follows from \cite[Thm.\,2.6.6]{BCR} that there are a continuous semialgebraic function $h_1:S\to\R$ and a positive integer $k_1$ such that $g^{k_1}=fh_1$. Define $h=gh_1$, and $k=k_1+1$. Then $g^k=fh$, and we claim that $h$ is arc-analytic.

To prove the claim, let $\gamma:(-1,1)\to S$ be an analytic arc. Since $g\circ\gamma$ is analytic, it follows from the (analytic) Identity Principle that $g\circ\gamma$ is identically zero, or else its zeroes are isolated points in $(-1,1)$. In the first case, we have $h\circ\gamma=(g\circ\gamma)\cdot(h_1\circ\gamma)=0$ and we're done. For the second case, we have equality
\[
(h_1\circ\gamma)(t)=\frac{(g^{k_1}\circ\gamma)(t)}{(f\circ\gamma)(t)}
\]
for all $t$ at which $g\circ\gamma$ (and hence $f\circ\gamma$) does not vanish. Since $h_1\circ\gamma$ is continuous on the whole $(-1,1)$, the singularities of the right hand side meromorphic function are removable. Therefore, $h_1\circ\gamma$ and hence $h\circ\gamma$ is analytic.
\end{proof}

\begin{remark}
\label{rem:locally-closed-is-necessary}
As is well known, the assumption that $S$ be locally closed is necessary in Proposition~\ref{prop:weak-nullstellensatz}. Consider, after~\cite[Rem.\,2.6.5]{BCR}, the set
\[
S=\{(x,y)\in\R^2:y>0\}\cup\{(0,0)\}\,,
\]
and functions $f,g\in\Aa(S)$ defined as $f(x,y)=y$, $g(x,y)=x^2+y^2$. We have $\ZZ_S(f)\subset\ZZ_S(g)$, but $g^k/f$ has no continuous, and thus no arc-analytic, extension to $S$ for any $k\in\N$.
\end{remark}

The following result is a consequence of Lemma~\ref{lem:general-finite-intersection}. (The lemma below was actually claimed in \cite[Lem.\,5.3]{S}, alas with an invalid proof.)

\begin{lemma}
\label{lem:zero-set-by-one-function}
Let $S$ be a semialgebraic set, and let $I$ be an ideal in $\Aa(S)$. There exists a function $f\in I$ such that $\ZZ_S(I)=\ZZ_S(f)$. In particular, $\ZZ_S(I)$ is nonempty if $I$ is a proper ideal.
\end{lemma}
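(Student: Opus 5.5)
The plan is to combine Lemma~\ref{lem:general-finite-intersection} with the sum-of-squares trick. For each $g\in I$, the zero set $\ZZ_S(g)$ is $S$-$\AR$-closed. This follows from Remark~\ref{rem:zero-set-arcan-ext}(i), i.e.\ Proposition~\ref{prop:arcan-AR-continuous}(ii); concretely, $\ZZ_S(g)$ is semialgebraic, and it is arc-symmetric in $S$ because $g\circ\gamma$ is analytic for every analytic arc $\gamma$ into $S$. We also have
\[
\ZZ_S(I)=\bigcap_{g\in I}\ZZ_S(g)\,.
\]

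We apply Lemma~\ref{lem:general-finite-intersection} to the family $\{\ZZ_S(g)\}_{g\in I}$. It yields finitely many $g_1,\dots,g_t\in I$ such that
\[
\ZZ_S(I)=\ZZ_S(g_1)\cap\dots\cap\ZZ_S(g_t)\,.
\]
Now put $f=g_1^2+\dots+g_t^2$. Since $I$ is an ideal, each $g_i^2=g_i\cdot g_i$ lies in $I$, so $f\in I$. Moreover $f\in\Aa(S)$, since $\Aa(S)$ is an $\R$-algebra. Because the $g_i$ are real-valued, $f(x)=0$ holds exactly when all $g_i(x)=0$. Hence $\ZZ_S(f)=\bigcap_i\ZZ_S(g_i)=\ZZ_S(I)$. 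If $I=(0)$, we take $f=0$, and the statement is trivial.

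For the last claim, suppose $\ZZ_S(I)=\varnothing$. Then $f$ has no zeros on $S$, so $1/f$ is semialgebraic. It is also arc-analytic: for any analytic arc $\gamma$ into $S$, $f\circ\gamma$ is analytic and nowhere vanishing, so $1/(f\circ\gamma)$ is analytic. Thus $f$ is a unit in $\Aa(S)$, and therefore $I=\Aa(S)$. Contrapositively, a proper ideal has a nonempty zero set.

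The only nontrivial input is the finiteness provided by Lemma~\ref{lem:general-finite-intersection}, which has already been established, so I expect no real obstacle. The one point that needs care is confirming that the zero sets of elements of $\Aa(S)$ are semialgebraic and arc-symmetric in $S$, so that the lemma applies; this is exactly what the remark and proposition cited above provide.
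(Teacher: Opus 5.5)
Your proof is correct and follows the paper's argument: each $\ZZ_S(g)$ is $S$-$\AR$-closed, Lemma~\ref{lem:general-finite-intersection} reduces the intersection to finitely many $g_1,\dots,g_t\in I$, and $f=g_1^2+\dots+g_t^2$ has the required zero set. Your explicit check of the ``in particular'' clause is sound and fills in a detail the paper leaves implicit: a zero-free $f\in\Aa(S)$ is a unit, because $1/f$ is again semialgebraic and arc-analytic.
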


\begin{proof}
By definition, we have
\[
\ZZ_S(I)=\bigcap_{g\in I}\ZZ_S(g)\,.
\]
Since, by Proposition~\ref{prop:arcan-AR-continuous}(ii), each of the sets $\ZZ_S(g)$ is $S$-$\AR$-closed, Lemma~\ref{lem:general-finite-intersection} implies that
\[
\ZZ_S(I)=\ZZ_S(g_1)\cap\dots\cap\ZZ_S(g_t)\,,
\]
for some $g_1,\dots,g_t\in I$. The function $f=g_1^2+\dots+g_t^2$ then has the required properties.
\end{proof}

Below, we denote by $\rad_X(I)$ the radical in $\Aa(X)$ of an ideal $I\subset\Aa(X)$.

\begin{theorem}[\textbf{Arc-Analytic Nullstellensatz}]
\label{thm:general-Nullstellensatz}
Let $S\subset\R^n$ be a locally closed semialgebraic set with the zero-set property, and let $X\subset S$ be $S$-$\AR$-closed.
\begin{itemize}
\item[(i)] If $Y\subset X$ is $X$-$\AR$-closed, then $\ZZ_X(\II_X(Y))=Y$. 
\item[(ii)] If $I$ is an ideal in $\Aa(X)$, then $\II_X(\ZZ_X(I))=\rad_X(I)$.
\end{itemize}
\end{theorem}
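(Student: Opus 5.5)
The plan is to reduce both statements to results already established: the zero-set property of $S$ for part (i), and the weak Nullstellensatz (Proposition~\ref{prop:weak-nullstellensatz}) together with Lemma~\ref{lem:zero-set-by-one-function} for part (ii). The one structural fact needed is transitivity of arc-symmetry. If $Y$ is arc-symmetric in $X$ and $X$ is arc-symmetric in $S$, then $Y$ is arc-symmetric in $S$. To see this, take an analytic arc $\gamma:(-1,1)\to S$ with $\gamma((-1,0))\subset Y$. Since $Y\subset X$, arc-symmetry of $X$ in $S$ gives $\gamma((-1,1))\subset X$. So $\gamma$ is an arc in $X$, and arc-symmetry of $Y$ in $X$ gives $\gamma((-1,1))\subset Y$. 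Semialgebraicity of $Y$ is part of the hypothesis, so $Y$ is $S$-$\AR$-closed.

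For (i), the inclusion $Y\subset\ZZ_X(\II_X(Y))$ is trivial. For the reverse inclusion, we use the previous paragraph: $Y$ is $S$-$\AR$-closed. By the zero-set property of $S$, there is $f\in\Aa(S)$ with $\ZZ_S(f)=Y$. Its restriction $f|_X$ is semialgebraic, and it is arc-analytic on $X$ because every analytic arc into $X$ is also an analytic arc into $S$. Since $Y\subset X$, we have $\ZZ_X(f|_X)=Y$, and $f|_X\in\II_X(Y)$. Hence
\[
\ZZ_X(\II_X(Y))\subset\ZZ_X(f|_X)=Y .
\]

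For (ii), the inclusion $\rad_X(I)\subset\II_X(\ZZ_X(I))$ is immediate. For the converse, let $g\in\II_X(\ZZ_X(I))$.
\begin{itemize}
\item[(1)] By Lemma~\ref{lem:zero-set-by-one-function} applied to the semialgebraic set $X$, there is $f\in I$ with $\ZZ_X(f)=\ZZ_X(I)$, and therefore $\ZZ_X(f)\subset\ZZ_X(g)$.
\item[(2)] To apply the weak Nullstellensatz we need $X$ to be locally closed in $\R^n$. By Lemma~\ref{lem:AR-closed-is-closed}, $X$ is closed in $S$, so $X=S\cap F$ for some closed $F\subset\R^n$. Since $S$ is locally closed, $X$ is locally closed as well.
\item[(3)] Proposition~\ref{prop:weak-nullstellensatz} on $X$ then gives $h\in\Aa(X)$ and $k\geq1$ with $g^k=fh\in I$. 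Hence $g\in\rad_X(I)$.
\end{itemize}

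The argument has no serious obstacle. The points needing care are the transitivity of arc-symmetry and the local closedness of $X$; the rest is bookkeeping. Note also that (ii) uses only local closedness, not the zero-set property. The zero-set property of $S$ enters only in (i), and it is passed to $X$ through restriction of functions rather than assumed for $X$ itself.
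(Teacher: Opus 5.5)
Your proposal is correct and follows the paper's proof essentially step for step. Part (i) restricts to $X$ a zero-set function for $Y$ supplied by the zero-set property of $S$; you spell out the transitivity of arc-symmetry, which the paper leaves implicit. Part (ii) combines Lemma~\ref{lem:zero-set-by-one-function} with Proposition~\ref{prop:weak-nullstellensatz}, applied to $X$, which is locally closed because it is closed in $S$.
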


\begin{proof} (i) Since $S$ has the zero-set property, given an $X$-$\AR$-closed (and hence $S$-$\AR$-closed) set $Y$, there exists $f\in\Aa(S)$ such that $Y=\ZZ_S(f)$. Then, the restriction $f|_X$ is in $\II_X(Y)$, and so $\ZZ_X(\II_X(Y))\subset\ZZ_X(f|_X)=Y$. The inclusion $\ZZ_X(\II_X(Y))\supset Y$ is obvious.

(ii) Given an ideal $I$ in $\Aa(X)$, we can choose, by Lemma~\ref{lem:zero-set-by-one-function}, a single function $f\in I$ such that $\ZZ_X(I)=\ZZ_X(f)$.
Let $g\in\II_X(\ZZ_X(I))$ be arbitrary. Then, $\ZZ_X(f)\subset\ZZ_X(g)$.
As a closed subset of $S$ (Lemma~\ref{lem:AR-closed-is-closed}), $X$ itself is a locally closed semialgebraic set. It thus follows from Proposition~\ref{prop:weak-nullstellensatz} that $g\in\rad_X((f))$, and hence $g\in\rad_X(I)$. This proves that $\II_X(\ZZ_X(I))\subset\rad_X(I)$. The opposite inclusion follows from the fact that $\II_X(\ZZ_X(I))$ is a radical ideal which contains $I$.
\end{proof}

\begin{corollary}
\label{cor:zero-set-of-prime-is-irred}
Let $S\subset\R^n$ be a locally closed semialgebraic set with the zero-set property, and let $\pp$ be a prime ideal in $\Aa(S)$. Then, the zero-set $\ZZ_S(\pp)$ is $S$-$\AR$-irreducible.
\end{corollary}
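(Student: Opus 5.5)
The plan is the standard argument from commutative algebra, using the zero-set property, Lemma~\ref{lem:zero-set-by-one-function}, and the Nullstellensatz (Theorem~\ref{thm:general-Nullstellensatz}) with $X=S$.

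Set $Z=\ZZ_S(\pp)$. We first check that $Z$ is $S$-$\AR$-closed and nonempty.
\begin{itemize}
\item[(1)] By Lemma~\ref{lem:zero-set-by-one-function} there is $f\in\pp$ with $Z=\ZZ_S(f)$. This set is $S$-$\AR$-closed by Proposition~\ref{prop:arcan-AR-continuous}(ii).
\item[(2)] Since $\pp$ is a proper ideal, the same lemma shows that $Z\neq\varnothing$.
\end{itemize}

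Now suppose, for a contradiction, that $Z=Z_1\cup Z_2$ with $Z_1,Z_2$ both $S$-$\AR$-closed and proper subsets of $Z$.
\begin{itemize}
\item[(3)] By the zero-set property, pick $f_1,f_2\in\Aa(S)$ with $\ZZ_S(f_i)=Z_i$.
\item[(4)] Then $\ZZ_S(f_1f_2)=Z_1\cup Z_2=Z$, so $f_1f_2\in\II_S(Z)$.
\item[(5)] By Theorem~\ref{thm:general-Nullstellensatz}(ii) applied with $X=S$ (which is $S$-$\AR$-closed in itself), $\II_S(\ZZ_S(\pp))=\rad_S(\pp)=\pp$, the last equality because $\pp$ is prime. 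Hence $f_1f_2\in\pp$.
\item[(6)] Primality gives, say, $f_1\in\pp$. Then $Z=\ZZ_S(\pp)\subset\ZZ_S(f_1)=Z_1$, contradicting $Z_1\subsetneq Z$.
\end{itemize}
Therefore $Z$ is $S$-$\AR$-irreducible.

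There is no real obstacle. The only point to check is that the Nullstellensatz applies with $X=S$, which needs $S$ locally closed with the zero-set property; both are hypotheses of the corollary.
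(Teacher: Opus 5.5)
Your proof is correct and follows essentially the paper's argument. The paper likewise uses Theorem~\ref{thm:general-Nullstellensatz}(ii) to get $\II_S(\ZZ_S(\pp))=\pp$, the zero-set property to realize the pieces as zero-sets, and primality applied to the product. The only differences are that the paper works with the full decomposition of $\ZZ_S(\pp)$ into $S$-$\AR$-irreducible components $X_1,\dots,X_r$ rather than a two-set splitting, and that you explicitly check $\ZZ_S(\pp)$ is nonempty (via Lemma~\ref{lem:zero-set-by-one-function}), which the paper leaves implicit.
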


\begin{proof}
By Theorem~\ref{thm:general-Nullstellensatz}(ii), $\pp=\II_S(\ZZ_S(\pp))$. Suppose $X_1,\dots, X_r$ are the $S$-$\AR$-irreducible components of $\ZZ_S(\pp)$. Since $S$ has the zero-set property, there are $f_1,\dots,f_r\in\Aa(S)$ such that $X_j=\ZZ_S(f_j)$ for all $j$. Then, $f_1\cdots f_r|_{\ZZ_S(\pp)}=0$, and hence $f_1\cdots f_r\in\II_S(\ZZ_S(\pp))=\pp$. Since $\pp$ is prime, $f_j\in\pp$ for some $j$, and consequently $\ZZ_S(\pp)\subset\ZZ_S(f_j)=X_j$. Therefore, $r=1$ and $\ZZ_S(\pp)=X_1$ is $S$-$\AR$-irreducible.
\end{proof}

\begin{corollary}
\label{cor:irred-iff-prime}
Let $S\subset\R^n$ be a locally closed semialgebraic set with the zero-set property, and let $X\subset S$ be an $S$-$\AR$-closed set. The following properties are equivalent:
\begin{itemize}
\item[(i)] $X$ is $S$-$\AR$-irreducible
\item[(ii)] $X$ is an irreducible semialgebraic set
\item[(iii)] $\II_S(X)$ is prime.
\end{itemize}
\end{corollary}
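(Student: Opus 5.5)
The plan is to prove the cycle of implications (i)$\Rightarrow$(ii)$\Rightarrow$(iii)$\Rightarrow$(i), using only results already established above. None of the steps requires new geometry; the work is in keeping track of which ambient set ($S$ or $X$) each zero-set and each closedness notion refers to.

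\textbf{(i)$\Rightarrow$(ii).} This is exactly Proposition~\ref{prop:s-AR-irred-implies-arcan-irred}, and it holds with no hypothesis on $S$.

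\textbf{(ii)$\Rightarrow$(iii).} This is the first assertion of Proposition~\ref{prop:prime-when-irred}. If $fg\in\II_S(X)$, then $f|_X\cdot g|_X=0$ in the domain $\Aa(X)$, so one of $f$, $g$ vanishes on $X$. Properness of $\II_S(X)$ follows because $X\neq\varnothing$; here I assume, as is implicit in the statement, that $X$ is nonempty.

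\textbf{(iii)$\Rightarrow$(i).} I will imitate the proof of Corollary~\ref{cor:zero-set-of-prime-is-irred}, but first I need the identity $\ZZ_S(\II_S(X))=X$. This follows from Theorem~\ref{thm:general-Nullstellensatz}(i), applied with $S$ itself in the role of the $S$-$\AR$-closed set and $Y=X$. Alternatively, it is immediate from the zero-set property: choose $f\in\Aa(S)$ with $\ZZ_S(f)=X$; then $f\in\II_S(X)$, so $\ZZ_S(\II_S(X))\subset X$, and the reverse inclusion is trivial.

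Now set $\pp=\II_S(X)$, which is prime by assumption. Corollary~\ref{cor:zero-set-of-prime-is-irred} then says that $\ZZ_S(\pp)=X$ is $S$-$\AR$-irreducible. To keep the argument transparent, I will also run it directly. Let $X_1,\dots,X_r$ be the $S$-$\AR$-irreducible components of $X$. By the ZSP there are $f_j\in\Aa(S)$ with $\ZZ_S(f_j)=X_j$. Then $f_1\cdots f_r$ vanishes on $X$, so it lies in $\pp$. Since $\pp$ is prime, some $f_j\in\pp$, which gives $X\subset X_j$. Hence $r=1$ and $X$ is irreducible.

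The only point needing care is the ambient-space bookkeeping in (iii)$\Rightarrow$(i). The zero-set property is applied in $S$ to subsets of $X$; this is legitimate because the $S$-$\AR$-irreducible components $X_j$ of $X$ are $S$-$\AR$-closed.
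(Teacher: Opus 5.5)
Your proposal is correct and follows the paper's proof exactly: (i)$\Rightarrow$(ii) by Proposition~\ref{prop:s-AR-irred-implies-arcan-irred}, (ii)$\Rightarrow$(iii) by Proposition~\ref{prop:prime-when-irred}, and (iii)$\Rightarrow$(i) by Corollary~\ref{cor:zero-set-of-prime-is-irred} together with $X=\ZZ_S(\II_S(X))$ from Theorem~\ref{thm:general-Nullstellensatz}(i). Your direct rerun of that corollary's argument is also sound, and it shows something slightly stronger: for a prime of the form $\II_S(X)$, the identity $\II_S(\ZZ_S(\pp))=\pp$ is automatic, so (iii)$\Rightarrow$(i) needs only the zero-set property and not Theorem~\ref{thm:general-Nullstellensatz}(ii) or local closedness.
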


\begin{proof}
The implication (i)\,$\Rightarrow$\,(ii) follows from Proposition~\ref{prop:s-AR-irred-implies-arcan-irred}, while (ii)\,$\Rightarrow$\,(iii) follows from Proposition~\ref{prop:prime-when-irred}. (iii)\,$\Rightarrow$\,(i), in turn, follows from Corollary~\ref{cor:zero-set-of-prime-is-irred}, since $X=\ZZ_S(\II_S(X))$ in light of Theorem~\ref{thm:general-Nullstellensatz}(i).
\end{proof}

Let us now turn briefly to the maximal spectrum of $\Aa(S)$.

\begin{lemma}[{\cite[Lem.\,5.5]{S}}]
\label{lem:max-Nullstellensatz}
Let $S\subset\R^n$ be a semialgebraic set.
For every maximal ideal $\mi$ in $\Aa(S)$, we have $\II_S(\ZZ_S(\mi))=\mi$.
\end{lemma}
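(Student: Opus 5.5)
The plan is to use Lemma~\ref{lem:zero-set-by-one-function} to reduce everything to a single function, and then let maximality of $\mi$ do the rest.

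First observe that $\mi$ is proper. By Lemma~\ref{lem:zero-set-by-one-function}, there is $f\in\mi$ with $\ZZ_S(\mi)=\ZZ_S(f)$, and this set is nonempty. The inclusion $\mi\subset\II_S(\ZZ_S(\mi))$ is trivial. Since $\mi$ is maximal, it therefore suffices to show that $\II_S(\ZZ_S(\mi))$ is a proper ideal; equality then follows.

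Properness is immediate. Since $\ZZ_S(\mi)\neq\varnothing$, pick a point $a\in\ZZ_S(\mi)$. Every element of $\II_S(\ZZ_S(\mi))$ vanishes at $a$, so the constant function $1$ is not in it. Hence $\mi\subset\II_S(\ZZ_S(\mi))\subsetneq\Aa(S)$, and maximality gives $\II_S(\ZZ_S(\mi))=\mi$.

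The argument needs neither local closedness nor the zero-set property. The only substantive input is the nonemptiness of $\ZZ_S(I)$ for a proper ideal $I$, which comes from Lemma~\ref{lem:zero-set-by-one-function}. That lemma rests on the finite-intersection property (Lemma~\ref{lem:general-finite-intersection}): it represents $\ZZ_S(\mi)=\ZZ_S(f)$ with $f=g_1^2+\dots+g_t^2\in\mi$. If $\ZZ_S(f)$ were empty, then $1/f$ would be semialgebraic and arc-analytic, making $f$ a unit in $\mi$ and contradicting properness. With that in hand, the rest is formal.
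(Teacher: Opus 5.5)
Your proposal is correct and follows essentially the same route as the paper: the trivial inclusion $\mi\subset\II_S(\ZZ_S(\mi))$, nonemptiness of $\ZZ_S(\mi)$ via Lemma~\ref{lem:zero-set-by-one-function} so that $1\notin\II_S(\ZZ_S(\mi))$, and then maximality of $\mi$. Your added justification of nonemptiness (if $\ZZ_S(f)=\varnothing$ then $1/f\in\Aa(S)$, making $f$ a unit) correctly supplies the detail the paper leaves implicit in that lemma.
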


\begin{proof}
That $\mi\subset\II_S(\ZZ_S(\mi))$ \, is clear.
It then suffices by maximality of $\mi$ to observe that $\II_S(\ZZ_S(\mi))$ is a proper ideal.
Indeed, by Lemma~\ref{lem:zero-set-by-one-function}, $\ZZ_S(\mi)$ is not empty, and hence $1\notin\II_S(\ZZ_S(\mi))$.
\end{proof}

Consider the canonical coordinate system $x=(x_1,\dots,x_n)$ of $\R^n$.
For a semialgebraic set $S\subset\R^n$ and a point $a=(a_1,\dots,a_n)\in S$, let $\mi_a=\II_S(\{a\})$ denote the ideal in $\Aa(S)$ of all functions vanishing at $a$. 
In the case that $S$ is locally closed in $\R^n$, Theorem~\ref{thm:general-Nullstellensatz}(ii) gives the description $\mi_a=\rad_S((x_1-a_1,\dots,x_n-a_n))$.
In general, without the local closedness assumption, we have the following.

\begin{proposition}[{\cite[Prop.\,5.6]{S}}]
Let $S$ be a semialgebraic subset of $\R^n$.
The family of maximal ideals in $\Aa(S)$ is given by $\{\mi_a : a\in S\}$.
\end{proposition}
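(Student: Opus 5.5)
The statement has two halves. First, every $\mi_a$ is a maximal ideal. Second, every maximal ideal is of this form.

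\emph{Each $\mi_a$ is maximal.} Consider the evaluation homomorphism $\mathrm{ev}_a:\Aa(S)\to\R$, $f\mapsto f(a)$. It is a surjective $\R$-algebra homomorphism, since constants are arc-analytic. Its kernel is $\mi_a$ by definition. Hence $\Aa(S)/\mi_a\cong\R$ is a field, and $\mi_a$ is maximal.

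\emph{Every maximal ideal is some $\mi_a$.} Let $\mi$ be a maximal ideal of $\Aa(S)$. Since $\mi$ is proper, Lemma~\ref{lem:zero-set-by-one-function} shows that $\ZZ_S(\mi)\neq\varnothing$. Pick any $a\in\ZZ_S(\mi)$. Every element of $\mi$ vanishes at $a$, so $\mi\subset\mi_a$. The ideal $\mi_a$ is proper, because the constant $1$ does not vanish at $a$. Maximality of $\mi$ then forces $\mi=\mi_a$. Equivalently, one can use Lemma~\ref{lem:max-Nullstellensatz}, which gives $\mi=\II_S(\ZZ_S(\mi))\subset\II_S(\{a\})=\mi_a$, and conclude by maximality.

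As a byproduct, $\ZZ_S(\mi)=\{a\}$. Indeed, if $b\neq a$ were also in $\ZZ_S(\mi)$, the same argument would give $\mi=\mi_b$. But the function $x\mapsto\|x-a\|^2$ is polynomial, hence lies in $\Aa(S)$, and it belongs to $\mi_a$ and not to $\mi_b$. So $a\mapsto\mi_a$ is a bijection onto the maximal spectrum; this is not needed for the statement itself.

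The only nontrivial ingredient is the nonemptiness of $\ZZ_S(\mi)$, which is already supplied by Lemma~\ref{lem:zero-set-by-one-function}. That lemma rests on the Noetherianity statement of Lemma~\ref{lem:general-finite-intersection} and on taking a sum of squares. I therefore expect no real obstacle. The one point to emphasize is that local closedness of $S$ is not needed here: nonemptiness of zero sets of proper ideals uses only finite intersection and sums of squares, not the {\L}ojasiewicz inequality.
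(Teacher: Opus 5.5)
Your proof is correct. The second half is the paper's argument: nonemptiness of $\ZZ_S(\mi)$ via Lemma~\ref{lem:zero-set-by-one-function}, then $\mi\subset\mi_a$, then maximality. You also rightly note that the inclusion $\mi\subset\mi_a$ is trivial, so the paper's appeal to Lemma~\ref{lem:max-Nullstellensatz} at that step is not needed.

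For the first half you take a different and more elementary route. You identify $\mi_a$ as the kernel of the surjective evaluation map $\Aa(S)\to\R$, so $\Aa(S)/\mi_a\cong\R$ is a field. The paper instead proceeds in four steps:
\begin{itemize}
\item it takes a maximal ideal $\mi\supset\mi_a$;
\item it shows $\ZZ_S(\mi_a)=\{a\}$ using $\sum_i(x_i-a_i)^2\in\mi_a$;
\item it deduces $\ZZ_S(\mi)=\{a\}$ from the nonemptiness in Lemma~\ref{lem:zero-set-by-one-function};
\item it concludes $\mi=\II_S(\{a\})=\mi_a$ by Lemma~\ref{lem:max-Nullstellensatz}.
\end{itemize}

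Your argument uses only the fact that constants are arc-analytic. Combined with the second half, it also shows that every residue field $\Aa(S)/\mi$ is $\R$. The paper's argument runs everything through the zero-set/vanishing-ideal correspondence. That fits the Nullstellensatz theme of the section, but it is heavier than necessary here. Your remark that local closedness plays no role matches the paper, which states the proposition for arbitrary semialgebraic $S$.
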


\begin{proof}
First, we show that each $\mi_a=\II_S(\{a\})$ is maximal, where $a=(a_1,\dots,a_n)\in S$.
Let $\mi$ be a maximal ideal in $\Aa(S)$ containing $\mi_a$. Then, $\ZZ_S(\mi)\subset\ZZ_S(\mi_a)$.
Now, $\ZZ_S(\mi_a)\supset\{a\}$, together with the fact that
\[
(x_1-a_1)^2+\cdots+(x_n-a_n)^2\in\mi_a\,,
\]
imply that $\ZZ_S(\mi_a)=\{a\}$. Therefore, since $\ZZ_S(\mi)$ is not empty by Lemma~\ref{lem:zero-set-by-one-function}, we get $\ZZ_S(\mi)=\{a\}$.
It follows by Lemma~\ref{lem:max-Nullstellensatz} that $\mi=\II_S(\{a\})=\mi_a$.

Next, we show that every maximal ideal is included in this family. Let $\mi$ be a maximal ideal in $\Aa(S)$. Because $\ZZ_S(\mi)$ is nonempty, we can choose $a\in\ZZ_S(\mi)$. Then, using Lemma~\ref{lem:max-Nullstellensatz}, we can write
\[
\mi=\II_S(\ZZ_S(\mi))\subset\II_S(\{a\})=\mi_a\,,
\]
whence $\mi=\mi_a$ by maximality.
\end{proof}

To sum up the above discussion, we showed that:

\begin{corollary}
\label{cor-alg-geom-dictionary}
Let $S\subset\R^n$ be a locally closed semialgebraic set with the zero-set property.
The assignment $\{X\mapsto\II_S(X)\}$ establishes a one-to-one correspondence between:
\begin{itemize}
\item[(i)] $S$-$\AR$-closed subsets of $S$ and the radical ideals in $\Aa(S)$
\item[(ii)] $S$-$\AR$-irreducible subsets of $S$ and the prime ideals in $\Aa(S)$
\item[(iii)] Points of $S$ and the maximal ideals in $\Aa(S)$.
\end{itemize}
\end{corollary}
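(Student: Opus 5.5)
The correspondence will follow by assembling Theorem~\ref{thm:general-Nullstellensatz}, Corollary~\ref{cor:irred-iff-prime}, and the preceding description of maximal ideals. Throughout I will use the inverse assignment $I\mapsto\ZZ_S(I)$ and show that the two maps are mutually inverse on each pair of families.

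For (i), take $X$ to be $S$-$\AR$-closed. Then $\II_S(X)$ is a radical ideal, since $f^k|_X=0$ forces $f|_X=0$. Applying Theorem~\ref{thm:general-Nullstellensatz}(i) with the ambient set $X=S$ and $Y=X$ gives $\ZZ_S(\II_S(X))=X$, so $X\mapsto\II_S(X)$ is injective. Conversely, for any ideal $I$ the set $\ZZ_S(I)$ is $S$-$\AR$-closed: by Lemma~\ref{lem:zero-set-by-one-function} it equals $\ZZ_S(f)$ for a single $f$, and Proposition~\ref{prop:arcan-AR-continuous}(ii) shows this is closed. If $I$ is radical, Theorem~\ref{thm:general-Nullstellensatz}(ii) gives $\II_S(\ZZ_S(I))=\rad_S(I)=I$. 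Hence the assignment is surjective onto radical ideals, and the two maps are inverse bijections.

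For (ii), I restrict the bijection of (i). By Corollary~\ref{cor:irred-iff-prime}, an $S$-$\AR$-closed $X$ is $S$-$\AR$-irreducible if and only if $\II_S(X)$ is prime. Every prime ideal is radical, so it lies in the image of (i), and its zero set is $S$-$\AR$-irreducible by Corollary~\ref{cor:zero-set-of-prime-is-irred}. The one point to watch is that ``$S$-$\AR$-irreducible subsets'' must be understood as nonempty $S$-$\AR$-closed irreducible sets. This matches the definition of irreducibility given earlier, and it also matches the fact that the whole ring is excluded from the primes, because $\ZZ_S(\Aa(S))=\varnothing$.

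For (iii), points are $S$-$\AR$-closed: $\{a\}=\ZZ_S\bigl(\sum_i(x_i-a_i)^2\bigr)$. The preceding proposition shows the maximal ideals are exactly the $\mi_a=\II_S(\{a\})$, and injectivity of $a\mapsto\mi_a$ follows from $\ZZ_S(\mi_a)=\{a\}$. No step is a serious obstacle. The main care needed is in (ii), in checking that primes indeed correspond to nonempty irreducible closed sets, and that bookkeeping is handled by the corollaries already cited.
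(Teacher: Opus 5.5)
Your proposal is correct and is essentially the paper's argument. The paper gives no separate proof and presents this corollary as a summary of Theorem~\ref{thm:general-Nullstellensatz}, Corollaries~\ref{cor:zero-set-of-prime-is-irred} and~\ref{cor:irred-iff-prime}, and the proposition identifying the maximal ideals with the $\mi_a$; your assembly of these results, taking $X=S$ in the Nullstellensatz and handling the nonempty-set and unit-ideal conventions in (ii), is exactly that summary made explicit.
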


For the next result, recall the notion of Krull dimension in a Noetherian topological space:
Let $X$ be a Noetherian topological space, and let $F\subset X$ be a nonempty irreducible closed set. The \emph{Krull dimension} of $F$, denoted $\dim_K\!F$, is defined as the supremum of lengths $l$ of the chains
\[
F=F_0\varsupsetneq F_1\varsupsetneq F_2\varsupsetneq\dots\varsupsetneq F_l
\]
of irreducible closed sets in $X$. For an arbitrary nonempty closed set $F$, its Krull dimension is the maximum of Krull dimensions of its irreducible components.
By convention, $\dim_K\!\varnothing=-\infty$.

\begin{theorem}
\label{thm:dim-equals-Krull-dim}
Let $S\subset\R^n$ be a locally closed semialgebraic set with the zero-set property. Then,
\[
\dim{S}=\dim_K\!S=\dim\Aa(S)=\dim\Aa(\Cl^\AR_M(S))=\dim_K(\Cl^\AR_M(S))=\dim\Cl^\AR_M(S)\,,
\]
where $\dim_K\!S$ is the topological Krull dimension of $S$ equipped with its intrinsic $S$-$\AR$-topology, $\dim\Aa(S)$ is the Krull dimension of the ring $\Aa(S)$, and $M\subset\R^n$ is an arbitrary Nash manifold containing $S$.
\end{theorem}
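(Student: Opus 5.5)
The chain of equalities splits into three links.
\begin{itemize}
\item[(a)] $\dim S=\dim_K S$, the geometric part, proved via Corollary~\ref{cor:irred-implies-dim-drop}.
\item[(b)] $\dim_K S=\dim\Aa(S)$, which is purely algebraic and comes from the dictionary of Corollary~\ref{cor-alg-geom-dictionary}.
\item[(c)] The right-hand half, $\dim\Aa(\Cl^\AR_M(S))=\dim_K(\Cl^\AR_M(S))=\dim\Cl^\AR_M(S)$. This is the same statement applied to $X=\Cl^\AR_M(S)$, an $\AR$-closed subset of $M$, and hence locally closed with the ZSP by Theorem~\ref{thm:ASey-zero-set-thm}. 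The intrinsic $X$-$\AR$ topology on $X$ agrees with the $M$-$\AR$ topology restricted to $X$, because $X$ is arc-symmetric in $M$. Finally $\dim\Cl^\AR_M(S)=\dim S$ by Proposition~\ref{prop:alg-closure-dim}.
\end{itemize}
So it suffices to prove $\dim S=\dim_K S=\dim\Aa(S)$ for a locally closed $S$ with the ZSP.

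For (b): by Corollary~\ref{cor-alg-geom-dictionary}(ii), $X\mapsto\II_S(X)$ is an inclusion-reversing bijection between $S$-$\AR$-irreducible closed subsets and prime ideals of $\Aa(S)$. Strict chains of irreducible closed sets therefore correspond exactly to strict chains of primes. Taking suprema gives $\dim_K S=\dim\Aa(S)$; the Krull dimension of a closed set was defined as the maximum over its components, and every prime chain starts at a minimal prime, i.e. inside a component.

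For $\dim_K S\le\dim S$: take a chain $X_0\supsetneq X_1\supsetneq\dots\supsetneq X_l$ of $S$-$\AR$-irreducible closed sets. By Corollary~\ref{cor:irred-iff-prime} each $X_i$ is an irreducible semialgebraic set. Corollary~\ref{cor:irred-implies-dim-drop} applied to $X_{i+1}\subset X_i$ then gives $\dim X_{i+1}<\dim X_i$. Hence $l\le\dim X_0\le\dim S$.

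For $\dim_K S\ge\dim S=:d$, I will build a chain of length $d$, which I expect to be the main obstacle.
\begin{enumerate}
\item Pick a point $a$ in a $d$-dimensional stratum $\Gamma$ of a Nash stratification of $S$. Choose a small open neighbourhood $U\subset\Gamma$ of $a$, Nash diffeomorphic to $\R^d$ by coordinates $(u_1,\dots,u_d)$ centered at $a$.
\item Let $L_i=\{u_1=\dots=u_i=0\}\cap U$, a connected Nash submanifold of dimension $d-i$, and set $X_i=\Cl^\AR_S(L_i)$.
\item Each $L_i$ is irreducible by Corollary~\ref{cor:smooth-connected-is-irred}. Since $S$ has the ZSP, Lemma~\ref{lem:zsp-irred-implies-AR-irred} makes each $X_i$ $S$-$\AR$-irreducible.
\item $\dim X_i=d-i$ because $\dim L_i\le\dim X_i\le\dim\Cl^\AR_M(L_i)=\dim L_i$. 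Here $X_i\subset\Cl^\AR_M(L_i)\cap S$, since a trace of an $\AR$-closed set of $M$ is $S$-$\AR$-closed, and the dimension equality is Proposition~\ref{prop:alg-closure-dim}.
\end{enumerate}
The dimensions strictly decrease, so the inclusions $X_0\supset X_1\supset\dots\supset X_d$ are strict and the chain has length $d$. The delicate points are exactly the dimension bound in step 4 via the ambient $\AR$-closure, and the use of the ZSP to guarantee irreducibility of the closures in step 3.
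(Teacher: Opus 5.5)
Your proof is correct. The upper bound via Corollary~\ref{cor:irred-implies-dim-drop} and the identification $\dim_K S=\dim\Aa(S)$ via Corollary~\ref{cor-alg-geom-dictionary}(ii) are exactly the paper's argument. The other two pieces are done differently.

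For the right-hand equalities, the paper does not reapply its own result. It uses Shiota's theorem to assume $M$ is $\AR$-closed in some $\R^N$, and then quotes Kurdyka's \cite[Prop.\,2.11]{Kur}. Your reduction is also valid: $X=\Cl^\AR_M(S)$ is closed in the locally closed $M$; it has the ZSP by Theorem~\ref{thm:ASey-zero-set-thm}, since every $X$-$\AR$-closed set is arc-symmetric in $M$; its intrinsic topology equals the subspace $\AR$ topology; and $\dim X=\dim S$ by Proposition~\ref{prop:alg-closure-dim}. This makes the theorem self-contained within the paper, at the price of routing the classical case through this section's Nullstellensatz machinery instead of an external citation.

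For the lower bound, the paper cuts a $k$-dimensional $S$-$\AR$-irreducible component $S_0$ by a generic affine hypersurface through a point of $\Reg_k S_0$. It then extracts a $(k-1)$-dimensional $S$-$\AR$-irreducible piece of $S_0\cap H$ and inducts. You instead take $S$-$\AR$-closures of a flag of coordinate submanifolds in a chart of a top-dimensional stratum. You pin the dimensions exactly through $X_i\subset\Cl^\AR_M(L_i)\cap S$, which avoids any genericity argument and produces the whole chain at once.

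The ZSP is not actually needed in your step~3. By Lemma~\ref{lem:dim-drop}, the connected Nash manifold $L_i$ is not a union of two proper arc-symmetric subsets. Hence $L_i$ is irreducible in the subspace topology induced by the $S$-$\AR$ topology, and the closure of an irreducible subspace is irreducible in any topological space. So your lower bound, like the paper's hyperplane argument, holds for arbitrary semialgebraic $S$.
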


\begin{proof}
The Euclidean dimension of $S$ is equal to that of its Zariski closure in $\R^n$. The equalities
\[
\dim{S}=\dim\Aa(\Cl^\AR_M(S))=\dim_K(\Cl^\AR_M(S))=\dim\Cl^\AR_M(S)
\]
thus follow from Kurdyka's~\cite[Prop.\,2.11]{Kur}, since by~\cite[Thm.\,VI.2.1, Rem.\,VI.2.11]{Shiota}, we may assume that the Nash manifold $M$ is an $\AR$-closed subset of some $\R^N$.

Suppose that $k=\dim{S}>0$. Let $S_0$ be an $S$-$\AR$-irreducible component of $S$, of dimension $k$, and let $p\in\Reg_k{S_0}$ be arbitrary. By semialgebraic stratification, for a generic affine hypersurface $H$ through $p$ in $\R^n$, the $S$-$\AR$-closed set $S_0\cap H$ is of dimension $k-1$, and hence it contains an $S$-$\AR$-irreducible set of the same dimension. It follows by induction that $\dim_K\!S\geq k$.

The inequality $\dim_K\!S\leq k$ follows from Corollary~\ref{cor:irred-implies-dim-drop}. 
Lastly, $\dim_K\!S=\dim\Aa(S)$, by Corollary~\ref{cor-alg-geom-dictionary}(ii).
\end{proof}

\vspace{2ex}
%%%%%%%%%%%%%%%%%%%%%%%%%%%%%%%%%%%%%%%%%%%%%%%%%%
%%%%%%%%%%%%%%%%% Section %%%%%%%%%%%%%%%%%%%%%%%%
%%%%%%%%%%%%%%%%%%%%%%%%%%%%%%%%%%%%%%%%%%%%%%%%%%

\section{Generalized arc-analytic extension property}
\label{sec:AEP}

In this section, we study the arc-analytic extension property (AEP, for short; Definition~\ref{def:zero-set-arcan-ext}) and its algebro-geometric consequences for a semialgebraic set. We begin by proving a generalization of Theorem~\ref{thm:ASey-arc-an-extension}, showing that the AEP holds on basic closed semialgebraic sets, and even more generally on all sets defined on a Nash manifold by weak inequalities on arc-analytic functions.

\begin{theorem}
\label{thm:basic-closed-has-AEP}
Let $M$ be a Nash manifold, let $h_1,\dots,h_s\in\Aa(M)$, and let $S\subset M$ be given as
\[
S=\{x\in M:h_1(x)\geq0\;\wedge\dots\wedge\;h_s(x)\geq0\}\,.
\]
Then, $S$ has the arc-analytic extension property.
\end{theorem}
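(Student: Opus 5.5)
Given an $S$-$\AR$-closed set $E\subset S$ and $f\in\Aa(E)$, the plan is to reduce to Theorem~\ref{thm:ASey-arc-an-extension} on $M$. We do this by realizing $S$, $E$, and the relevant pieces inside a Nash manifold where they become genuinely $\AR$-closed. The natural device is to ``unfold'' the inequalities. Consider the Nash manifold $N=M\times\R^s$ and the set
\[
\wh{S}=\{(x,t)\in N: h_1(x)=t_1^2,\dots,h_s(x)=t_s^2\}\,.
\]
Each $h_i(x)-t_i^2$ is in $\Aa(N)$, so $\wh{S}$ is $\AR$-closed in $N$, by Proposition~\ref{prop:arcan-AR-continuous}(ii). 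The projection $p:\wh{S}\to S$, $(x,t)\mapsto x$, is surjective, proper, and finite-to-one; its fibres are the orbits of the sign group $G=\{\pm1\}^s$ acting on the $t$-coordinates.

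Step 1: pull back. Set $\wh{E}=p^{-1}(E)$ and $\wh{f}=f\circ p$. Since $p$ is Nash, composing with an analytic arc into $\wh{E}$ gives an analytic arc into $E$. Hence $\wh{E}$ is arc-symmetric in $\wh{S}$, and therefore $\AR$-closed in $N$ (arc-symmetry in an $\AR$-closed set passes to $N$). Likewise $\wh{f}\in\Aa(\wh{E})$, and $\wh f$ is $G$-invariant. By Theorem~\ref{thm:ASey-arc-an-extension} there is $\wh{F}\in\Aa(N)$ with $\wh{F}|_{\wh{E}}=\wh{f}$. Averaging $\wh{F}$ over $G$ keeps it arc-analytic and semialgebraic, and keeps the restriction to $\wh E$ (which is $G$-stable and on which $\wh f$ is invariant). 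So we may assume $\wh F$ is $G$-invariant on $N$.

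Step 2: descend. Define $F:S\to\R$ by $F(x)=\wh{F}(x,t)$ for any $t$ with $t_i^2=h_i(x)$; this is well defined by invariance, is semialgebraic, and satisfies $F|_E=f$. It remains to show $F\in\Aa(S)$. Given an analytic arc $\gamma:(-1,1)\to S$, each $h_i\circ\gamma$ is analytic and nonnegative. We want to lift $\gamma$ to an analytic arc $t(s)$ with $t_i(s)^2=h_i(\gamma(s))$. A nonnegative analytic function of one variable has even order zeros. For $h_i\circ\gamma\not\equiv0$ we can therefore write $h_i\circ\gamma(s)=(s-s_0)^{2m}u(s)$ near each zero $s_0$, with $u>0$, and take the analytic square root $\pm(s-s_0)^m\sqrt{u}$. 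These local roots glue globally on the interval, choosing signs consistently (the set of zeros is discrete, and a square root of $g^2$-type functions extends analytically through them). Then $F\circ\gamma=\wh{F}\circ(\gamma,t)$ is analytic.

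The main obstacle is this lifting step. One must verify carefully that a nonnegative analytic function on $(-1,1)$ has a global analytic square root: locally it equals $\phi^2$ with $\phi$ analytic, and the square roots patch up across the discrete zero set by flipping sign appropriately. I would handle this via the Weierstrass-type factorization on the interval. A subsidiary point to check is that $G$-averaging preserves semialgebraicity and arc-analyticity, which is immediate since each $g\in G$ acts by a Nash automorphism of $N$.
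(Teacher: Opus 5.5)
Your proof is correct, and it takes a genuinely different and more economical route than the paper's.

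\textbf{The paper's route.} The paper inducts on $s$, adjoining one variable at a time via $h_s=w^2$. The set $T=\{h_1\geq0,\dots,h_{s-1}\geq0\}\subset M\times\R$ has the AEP by the inductive hypothesis, hence so does its $T$-$\AR$-closed subset $\wt S$. The descent to $S$ then uses the symmetric expression $\wt G(x,\sqrt{h_s(x)})^2+\wt G(x,-\sqrt{h_s(x)})^2$, where $\wt G$ extends $\sqrt{(f+m)/2}\circ\pi$. Because a square root of $f$ is taken, the paper must first treat $f$ bounded below. It then removes that assumption by polynomial boundedness, after arranging $M$ to be closed in some $\R^n$. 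Arc-analyticity of the descended function is verified by a case analysis of arcs: inside $\{h_s=0\}$, avoiding it, or meeting it at an isolated point, with $\alpha/2$ even or odd.

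\textbf{Your route.} You adjoin all $s$ square roots at once. So $\wh S$ is already $\AR$-closed in the Nash manifold $M\times\R^s$, and Theorem~\ref{thm:ASey-arc-an-extension} applies a single time with no induction. Your symmetrization is linear (averaging over $\{\pm1\}^s$), so no square root of $f$ is needed and the bounded-below reduction disappears entirely. Your lifting lemma is exactly the content of the paper's Case~2 computation, stated uniformly for all arcs.

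\textbf{The step you flag as the main obstacle is easier than you suggest.} Since $F(x)$ does not depend on the choice of signs of $t$, it suffices to lift $\gamma$ analytically on a small neighbourhood of each parameter value $s_0$ (reparametrized to $(-1,1)$). That is immediate from $h_i\circ\gamma=(s-s_0)^{2m}u$ with $u>0$, or from taking $t_i\equiv0$ when $h_i\circ\gamma\equiv0$. Your global sign-flipping gluing across the discrete zero set is correct but unnecessary.

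\textbf{What each approach buys.} The paper's inductive step only ever uses the AEP of the preceding set, not $\AR$-closedness in a manifold. For the theorem as stated, however, your argument is shorter and avoids the auxiliary reductions.
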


\begin{proof}
We proceed by induction on $s$, the number of defining inequalities of the set $S$. If $s=0$, so that $S=M$, then $S$ has the AEP by Theorem~\ref{thm:ASey-arc-an-extension}. Suppose then that $s\geq1$ and the claim holds for any semialgebraic set of the form $\{x\in N:g_1(x)\geq0\;\wedge\dots\wedge\;g_{s-1}(x)\geq0\}$, where $N$ is a Nash manifold and $g_1,\dots,g_{s-1}\in\Aa(N)$.

Given $S=\{x\in M:h_1(x)\geq0\;\wedge\dots\wedge\;h_s(x)\geq0\}$, define
\[
\wt{S}=\{(x,w)\in M\times\R:h_1(x)\geq0\;\wedge\dots\wedge\;h_{s-1}(x)\geq0\;\wedge\;h_s(x)=w^2\}\,.
\]
By the inductive hypothesis, the set $T=\{(x,w)\in M\times\R:h_1(x)\geq0\;\wedge\dots\wedge\;h_{s-1}(x)\geq0\}$ \;has the arc-analytic extension property. Since $\wt{S}$ is arc-symmetric in $T$ (as the zero-set of an arc-analytic function), it too has the extension property.

Let $\pi:\wt{S}\to S$ be the restriction of the canonical projection $M\times\R\to M$. Let $E\subset S$ be an arbitrary $S$-$\AR$-closed set, and let $f\in\Aa(E)$. Suppose first that $f$ is bounded below, and choose $m\in\R$ such that $f(x)+m\geq1$ for all $x\in E$. Then, the function $g:E\to\R$ defined as
\[
g(x)=\sqrt{\frac{f(x)+m}{2}}
\]
is arc-analytic on $E$. Let $\wt{E}=\pi^{-1}(E)$, and let $\wt{g}:\wt{E}\to\R$ be defined as $\wt{g}=g\circ(\pi|_{\wt{E}})$. Then, $\wt{g}\in\Aa(\wt{E})$, and since $\wt{E}$ is arc-symmetric in $\wt{S}$, there exists by assumption a function $\wt{G}$ arc-analytic on $\wt{S}$ and such that $\wt{G}|_{\wt{E}}=\wt{g}$.
Let $H^+$ and $H^-$ denote the two closed ``hemispheres'' of $\wt{S}$,
\[
H^+=\{(x,w)\in\wt{S}:w\geq0\}\,,\qquad H^-=\{(x,w)\in\wt{S}:w\leq0\}\,,
\]
and let a function $\wt{F}:S\to\R$ be defined as
\begin{equation}
\label{eq:collapsed-function}
\wt{F}(x)\;=\;\left(\wt{G}\circ(\pi|_{H^+})^{-1}(x)\right)^2+\left(\wt{G}\circ(\pi|_{H^-})^{-1}(x)\right)^2\,.
\end{equation}
We claim that $\wt{F}$ is arc-analytic on $S$. To see this, let $\gamma:(-1,1)\to S$ be an arbitrary (non-constant) analytic arc. If $\Int(\gamma^{-1}(\ZZ_M(h_s)))\neq\varnothing$, then $\gamma((-1,1))\subset\ZZ_M(h_s)$, by arc-symmetry of the set $S\cap\ZZ_M(h_s)$ in $S$. The arc $\gamma$ then lifts to $\wt{S}$ as $\wt\gamma(t)=(\gamma(t),0)$, and
\[
(\pi|_{H^+})^{-1}(\gamma(t))=(\pi|_{H^-})^{-1}(\gamma(t))=\wt\gamma(t)\,,\quad\mathrm{for\ all\ }t\in(-1,1)\,.
\]
Consequently, by~\eqref{eq:collapsed-function}, $\wt{F}\circ\gamma=2(\wt{G}\circ\wt\gamma)^2$, and the latter is analytic.
If, in turn, $\gamma$ intersects the zero-set $\ZZ_M(h_s)$ only at isolated points, then the problem reduces to one of the following two cases:\\
Case 1: $\gamma(t)\in S\setminus\ZZ_M(h_s)$ for all $t$.\\
Case 2: $h_s(\gamma(0))=0$, and $h_s(\gamma(t))>0$ for all $t\neq0$. 

In the first case, $\gamma$ lifts to two (disjoint) analytic arcs, $\wt\gamma_1:(-1,1)\to H^+$ and $\wt\gamma_2:(-1,1)\to H^-$, satisfying $\pi|_{H^+}\circ\wt\gamma_1=\gamma$ and $\pi|_{H^-}\circ\wt\gamma_2=\gamma$. By~\eqref{eq:collapsed-function} we then have
\[
\wt{F}\circ\gamma=\left(\wt{G}\circ(\pi|_{H^+})^{-1}\circ(\pi|_{H^+})\circ\wt\gamma_1\right)^2+\left(\wt{G}\circ(\pi|_{H^-})^{-1}\circ(\pi|_{H^-})\circ\wt\gamma_2\right)^2=(\wt{G}\circ\wt\gamma_1)^2+(\wt{G}\circ\wt\gamma_2)^2\,,
\]
and the latter is analytic.
Finally, in Case 2, it suffices to show that $\wt{F}\circ\gamma$ has an analytic germ at the origin. To this end, consider the Taylor expansion at $t=0$ of the analytic function $h_s\circ\gamma(t)$. By virtue of $\gamma$ being non-constant and contained in the locus $\{h_s>0\}$ except at $t=0$, we can write
\[
h_s\circ\gamma(t)\;=\;t^\alpha\cdot u(t)\,,
\]
where $\alpha$ is a positive even integer, and $u(t)$ is analytic and positive for all $t\in(-\ve,\ve)$. The equation $h_s\circ\gamma(t)=w^2(t)$ thus admits two distinct analytic solutions, namely
\[
w_1(t)=t^{\alpha/2}\cdot\sqrt{u(t)}\qquad\mathrm{and}\qquad w_2(t)=-t^{\alpha/2}\cdot\sqrt{u(t)}\,,
\]
which give rise to analytic arcs $\wt\gamma_1,\wt\gamma_2:(-\ve,\ve)\to\wt{S}$ defined as
\[
\wt\gamma_1(t)=(\gamma(t),w_1(t))\qquad\mathrm{and}\qquad\wt\gamma_2(t)=(\gamma(t),w_2(t))\,.
\]
If $\alpha/2$ is even, then $(\pi|_{H^+})^{-1}(\gamma(t))=\wt\gamma_1(t)$ and $(\pi|_{H^-})^{-1}(\gamma(t))=\wt\gamma_2(t)$, for all $t$, and so $\wt{F}\circ\gamma$ is analytic by the arc-analyticity of $\wt{G}$.
If instead $\alpha/2$ is odd, then for $t\leq0$,
\[
\wt{F}\circ\gamma(t)=\left(\wt{G}\circ(\pi|_{H^+})^{-1}(\gamma(t))\right)^2+\left(\wt{G}\circ(\pi|_{H^-})^{-1}(\gamma(t))\right)^2=
(\wt{G}\circ\wt\gamma_2(t))^2+(\wt{G}\circ\wt\gamma_1(t))^2\,,
\]
and for $t\geq0$,
\[
\wt{F}\circ\gamma(t)=\left(\wt{G}\circ(\pi|_{H^+})^{-1}(\gamma(t))\right)^2+\left(\wt{G}\circ(\pi|_{H^-})^{-1}(\gamma(t))\right)^2=
(\wt{G}\circ\wt\gamma_1(t))^2+(\wt{G}\circ\wt\gamma_2(t))^2\,.
\]
In either case, $\wt{F}\circ\gamma$ is equal to $(\wt{G}\circ\wt\gamma_1)^2+(\wt{G}\circ\wt\gamma_2)^2$, which is analytic.
\smallskip

Observe next that, for all $x\in E$, we have
\begin{multline}
\notag
\wt{F}(x)\;=\;\left(\wt{G}((\pi|_{H^+})^{-1}(x))\right)^2+\left(\wt{G}((\pi|_{H^-})^{-1}(x))\right)^2\\
=\;\left(\wt{g}((\pi|_{H^+})^{-1}(x))\right)^2+\left(\wt{g}((\pi|_{H^-})^{-1}(x))\right)^2\\
=\;(g(x))^2+(g(x))^2\;=\;2\cdot\left(\sqrt{\frac{f(x)+m}{2}}\right)^2\;=\;f(x)+m\,.
\end{multline}
Define $F:S\to\R$ as $F=\wt{F}-m$. Then, $F\in\Aa(S)$ and $F|_E=f$, as required.
\smallskip

To complete the proof, suppose that $f\in\Aa(E)$ is not bounded below. By~\cite[Thm.\,VI.2.1, Rem.\,VI.2.11]{Shiota}, we may assume that the Nash manifold $M$ is a closed subset of some $\R^n$. Then, $E$ is closed in $\R^n$, as a closed subset of $S$, which itself is closed in $M$. By polynomial boundedness of continuous semialgebraic functions (see, e.g., \cite[Prop.\,2.6.2]{BCR}), there is a polynomial $P\in\R[x_1,\dots,x_n]$ such that $P(x)\geq1$ for all $x\in\R^n$ and $f/P$ is bounded below on $E$. One can now repeat the above proof to find an arc-analytic extension $F\in\Aa(S)$ of $f/P$. Then, $P\!\cdot\!F$ is the required extension of $f$.
\end{proof}

\begin{remark}
\label{rem:AR-closed-without-description}
As a curiosity, let us mention an example of an arc-symmetric semialgebraic set, for which we don't know an explicit defining arc-analytic function.
Namely, let $S_2$ be the set from Example~\ref{ex: Whitney-in-a-ball-irred-decomp}, which is the two-dimensional irreducible component of the set $S$ in that example. More precisely, we have
\[
S_2=\{(x,y,z)\in B:x^2=zy^2,z\geq0\}\,,
\]
where $B=\{(x,y,z)\in\R^3:\, x^2+(y-1)^2+(z+1)^2\leq2\}$.
If $\wt{B}$ denotes the ``suspension'' $\{(x,y,z,w)\in\R^4:\ 2-x^2-(y-1)^2-(z+1)^2=w^2\}$ of $B$, and $\pi:\wt{B}\to B$ is as in the proof of Theorem~\ref{thm:basic-closed-has-AEP}, then $\pi^{-1}(S_2)$ is an $\AR$-irreducible subset of $\R^4$. We challenge the reader to find an explicit formula for a function $f\in\Aa(\R^4)$ satisfying $\ZZ_{\R^4}(f)=\pi^{-1}(S_2)$.
\end{remark}
\smallskip

Next, we show that a semialgebraic set $S$ all of whose irreducible components (or, all of the $S$-$\AR$-irreducible components) have the AEP, enjoys the AEP as well. More generally, we have the following.

\begin{proposition}
\label{prop:comps-AEP-implies-AEP}
Let $S\subset\R^n$ be a semialgebraic set. Suppose $S=S_1\cup\dots\cup S_r$, where each $S_i$ is an $S$-$\AR$-closed set satisfying the arc-analytic extension property. Then, $S$ has the arc-analytic extension property.
\end{proposition}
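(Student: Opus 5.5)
The plan is to glue extensions constructed separately on the pieces, reducing first to the case of two pieces. The tool used repeatedly is the following observation. \emph{If $A,B\subset S$ are $S$-$\AR$-closed and $\gamma:(-1,1)\to A\cup B$ is an analytic arc, then $\gamma$ lies entirely in $A$ or entirely in $B$.} Indeed, $\gamma^{-1}(A)$ and $\gamma^{-1}(B)$ are closed in $(-1,1)$ by Lemma~\ref{lem:AR-closed-is-closed}, and they cover $(-1,1)$. By the Baire category argument from the proof of Theorem~\ref{thm:general-AR-top}, one of them has nonempty interior, and then arc-symmetry forces $\gamma((-1,1))$ into that set. Consequently, a function on $A\cup B$ whose restrictions to $A$ and to $B$ are arc-analytic is itself arc-analytic.

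\emph{Reduction to $r=2$.} We induct on $r$; the case $r=1$ is trivial. Let $S'=S_1\cup\dots\cup S_{r-1}$. It is $S$-$\AR$-closed. Each $S_i$ with $i<r$ is arc-symmetric in $S$, hence also in the smaller set $S'$, since arcs in $S'$ are arcs in $S$. By the inductive hypothesis, $S'$ therefore has the AEP. This leaves the case $S=A\cup B$, with $A,B$ both $S$-$\AR$-closed and both having the AEP.

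\emph{The case $S=A\cup B$.} Let $E\subset S$ be $S$-$\AR$-closed and $f\in\Aa(E)$.

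First, $E\cap A$ is arc-symmetric in $A$, because it is an intersection of $S$-$\AR$-closed sets. The restriction $f|_{E\cap A}$ is arc-analytic, so by the AEP of $A$ there is $F_A\in\Aa(A)$ with $F_A|_{E\cap A}=f|_{E\cap A}$.

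Next, consider $E_B=(E\cap B)\cup(A\cap B)$. It is $S$-$\AR$-closed, hence $B$-$\AR$-closed. Define $g$ on $E_B$ by $g=f$ on $E\cap B$ and $g=F_A$ on $A\cap B$. These two prescriptions agree on the overlap $E\cap A\cap B$, by the choice of $F_A$. By the observation above, every analytic arc in $E_B$ lies entirely in $E\cap B$ or entirely in $A\cap B$, and $g$ is analytic along it in either case; thus $g\in\Aa(E_B)$. The AEP of $B$ then gives $G\in\Aa(B)$ extending $g$.

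Finally, set $F=F_A$ on $A$ and $F=G$ on $B$. This is well defined, because $G=g=F_A$ on $A\cap B$. It is semialgebraic, and it is arc-analytic on $S$ by the observation, since every arc in $S$ lies in $A$ or in $B$. It extends $f$: on $E\cap A$ we have $F=F_A=f$, and on $E\cap B$ we have $F=G=f$.

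The only delicate point is the dichotomy for arcs, which is what makes the piecewise definitions arc-analytic. It rests entirely on Euclidean closedness of $S$-$\AR$-closed sets together with Baire category, both already used in the proof of Theorem~\ref{thm:general-AR-top}. Everything else is routine bookkeeping, and no local closedness of $S$ is needed.
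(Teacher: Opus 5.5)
Your proof is correct and follows essentially the same route as the paper: induction on $r$, extending first on $S'=S_1\cup\dots\cup S_{r-1}$, then extending the function glued on $(E\cup S')\cap S_r$ via the AEP of $S_r$, and finally gluing the two extensions. Your explicit Baire-category argument, that every analytic arc into a union of two $S$-$\AR$-closed sets lies entirely in one of them, spells out the step the paper attributes simply to ``the arc-symmetry of these sets.''
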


\begin{proof}
We proceed by induction on $r$. If $r=1$, there is nothing to show.
Suppose then that $r>1$, $S=S_1\cup\dots\cup S_r$, and the statement holds for all semialgebraic sets $T$ that can be written as a union of at most $r-1$ $T$-$\AR$-closed subsets with the arc-analytic extension property. Define $S'=S_1\cup\dots\cup S_{r-1}$. By the inductive hypothesis, $S'$ has the arc-analytic extension property, since all the $S_1,\dots, S_{r-1}$ are $S'$-$\AR$-closed.

Let $E\subset S$ be an arbitrary $S$-$\AR$-closed set and let $f\in\Aa(E)$. Then $E\cap S'$ is arc-symmetric in $S'$, and $f|_{E\cap S'}\in\Aa(E\cap S')$ as a restriction of an arc-analytic function. By assumption, there exists $F'\in\Aa(S')$ such that $F'|_{E\cap S'}=f|_{E\cap S'}$.

Now, $S'\cap S_n$ is arc-symmetric in $S_n$. Consider the restriction $F'|_{S'\cap S_n}\in\Aa(S'\cap S_n)$. We define a function $G:(E\cup S') \cap S_n\to\R$ as $G=F'|_{S'\cap S_n}\cup f|_{E\cap S_n}$. The function is well-defined as $F'$ is an extension of $f$ on $S'$ and thus, the two agree on the overlap of their domains. We claim that $G\in\Aa((E\cup S') \cap S_n)$. Indeed, this follows from the fact that an analytic arc into $(E\cup S') \cap S_n$ is either entirely contained in $S'\cap S_n$ or in $E\cap S_n$, by the arc-symmetry of these sets.

As $(E\cup S')\cap S_n$ is arc-symmetric in $S_n$, by the arc-analytic extension property of $S_n$, there exists an extension of $G$ to a function $F_n\in\Aa(S_n)$. Then $F:S\to\R$ defined as $F=F'\cup F_n$ is arc-analytic on the whole $S$, since it is arc-analytic on $S'$ and $S_n$ separately. By construction, $F|_E=f$, as required.
\end{proof}

\begin{conjecture}
\label{conj:AEP-for-all}
We conjecture that, more generally, every finite union of sets having the arc-analytic extension property, itself has this property. Consequently, we believe the AEP holds for all locally closed semialgebraic sets (cf.~\cite[Thm.\,2.7.2]{BCR}).
\end{conjecture}
\smallskip

Another class of semialgebraic sets that have the AEP are one-dimensional sets.

\begin{corollary}
\label{cor:1-dim-has-AEP}
Let $S\subset\R^n$ be a semialgebraic set and suppose $\dim{S}=1$. Then, $S$ has the arc-analytic extension property.
\end{corollary}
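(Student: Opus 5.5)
The plan is to deduce the corollary from Proposition~\ref{prop:comps-AEP-implies-AEP}: decompose $S$ into finitely many $S$-$\AR$-closed pieces, each of which has the AEP for a simple reason. For a one-dimensional $S$, the $S$-$\AR$-closed subsets are easy to describe. By Lemma~\ref{lem:dim-drop}, applied to the strata of a Nash stratification of $S$, every $S$-$\AR$-closed set $E\subset S$ has one of two forms. Either $E$ is finite, or $E$ is a union of a finite set and the closures in $S$ of some of the one-dimensional strata. More precisely, $E$ contains a one-dimensional stratum $\Gamma$ as soon as $\dim(E\cap\Gamma)=1$.

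The first step is to reduce to the case $E$ finite. The reduction goes through the $S$-$\AR$-irreducible components $S_1,\dots,S_r$ of $S$. Each one-dimensional component is, by the description above, a union of closures of one-dimensional strata glued at finitely many points. Each zero-dimensional component is a single point, and a single point trivially has the AEP. By Proposition~\ref{prop:comps-AEP-implies-AEP}, it then suffices to prove the AEP for a one-dimensional $S$-$\AR$-closed set $T$. Replacing $S$ by $T$, we may assume throughout that $S$ is $S$-$\AR$-irreducible.

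Now let $E\subset S$ be $S$-$\AR$-closed and $f\in\Aa(E)$. If $E=S$ there is nothing to prove. Otherwise $E$ is a proper arc-symmetric subset of an irreducible one-dimensional set, so $E$ is finite, say $E=\{a_1,\dots,a_m\}$. The extension can then be written down explicitly. Choose pairwise disjoint open balls $B_j$ around the $a_j$, and let $\rho_j$ be a Nash (or polynomial) function on $\R^n$ with $\rho_j(a_j)=1$ and $\rho_j(a_l)=0$ for $l\neq j$; Lagrange interpolation polynomials built from squared distances will do. Then $F=\sum_j f(a_j)\rho_j|_S$ is the restriction of a polynomial, hence lies in $\Aa(S)$, and it satisfies $F|_E=f$. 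Finiteness of $E$ thus makes the extension essentially free, and this is exactly the point at which one-dimensionality is used.

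The main obstacle I expect is the reduction step, specifically the claim that every proper $S$-$\AR$-closed subset of an $S$-$\AR$-irreducible one-dimensional set is finite. If $E$ contained a one-dimensional piece, then $E$ together with the closure of the remaining strata would give a decomposition of $S$ into two proper $S$-$\AR$-closed sets, contradicting irreducibility. To make this work, one has to check that the closure of a union of one-dimensional strata, taken together with finitely many points, is arc-symmetric in $S$. I would verify this via Definition~\ref{def:arc-symm-in-semialg}(iii). An injective analytic arc in a one-dimensional set that has a nondegenerate segment inside a given stratum closure must stay in the arc-symmetric hull of that stratum. Here I would use Lemma~\ref{lem:dim-drop} on each stratum and observe that an analytic arc can only pass between strata through the finitely many zero-dimensional strata.

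If this step proves delicate, I would instead avoid irreducible components altogether and argue directly. In that version $E$ is a union of a finite set $P$ and some one-dimensional pieces. One takes $F$ equal to $f$ on the one-dimensional part of $E$ and equal to an interpolating polynomial correction on the remaining one-dimensional strata, with the correction chosen to match $f$ at the finitely many junction points, which form a finite set. Since all analytic arcs switch strata only at these finitely many points, one then checks arc-analyticity at each such point.
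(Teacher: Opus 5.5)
\textbf{Gap in the key step.} Your main route is the paper's. You reduce via Proposition~\ref{prop:comps-AEP-implies-AEP} to a one-dimensional $S$-$\AR$-irreducible component $X$, show that every proper $S$-$\AR$-closed $E\subset X$ is finite, and then extend by an interpolating polynomial. The paper gets the finiteness from Proposition~\ref{prop:s-AR-irred-implies-arcan-irred} and Corollary~\ref{cor:irreducible-dim-drop}. You try to prove it directly, and that is where the gap is.

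The general fact you say must be checked is false: the closure of a union of one-dimensional strata together with finitely many points need not be arc-symmetric. In $S=\R$ with strata $(-\infty,0),\{0\},(0,\infty)$, the set $(-\infty,0]$ is not arc-symmetric. The verification you sketch does not close the gap either. Knowing that an arc with a segment in a stratum $\Gamma$ stays in the arc-symmetric hull of $\Gamma$ gives no control, since in an irreducible one-dimensional $X$ that hull is all of $X$ (by the very statement you are proving). Nothing in your sketch prevents an analytic arc from running through $Y$ (the closure of the strata not contained in $E$) and leaving through a vertex into a stratum of $E$.

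\textbf{The missing idea.} Use the arc-symmetry of $E$, not of $Y$. Let $\gamma:(-1,1)\to X$ be a nonconstant analytic arc. The preimage of the finitely many zero-dimensional strata is discrete, and each complementary interval is mapped into a single one-dimensional stratum.
\begin{itemize}
\item If one of these strata lies in $E$, then $\Int(\gamma^{-1}(E))\neq\varnothing$, and Definition~\ref{def:arc-symm-in-semialg}(i) forces $\gamma((-1,1))\subset E$.
\item Otherwise every interval goes into a stratum not contained in $E$, and by continuity $\gamma((-1,1))\subset Y$.
\end{itemize}
So every arc lies entirely in $E$ or entirely in $Y$. Since $E\cap Y$ is finite (Lemma~\ref{lem:dim-drop} on each stratum of $Y$), this makes $Y$ arc-symmetric, and your decomposition $X=E\cup Y$ goes through.

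\textbf{Your fallback.} The same dichotomy is exactly what the fallback needs, and with it the fallback is a complete proof that needs no irreducible components and no Proposition~\ref{prop:comps-AEP-implies-AEP}. Form $Y$ in $S$, take a polynomial $P$ with $P=f$ on the finite set $E\cap Y$, and set $F=f$ on $E$ and $F=P$ on $S\setminus E$. Then $F|_Y=P|_Y$, so $F\in\Aa(S)$. As you phrased it, however, ``checking arc-analyticity at each junction point'' would not work. Matching values at a junction does not make $F\circ\gamma$ analytic for an arc crossing from $E$ into $Y$; what makes it work is that no such arc exists.

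Once repaired, your argument is more elementary than the paper's. The paper relies on Corollary~\ref{cor:irreducible-dim-drop}, and through it on the blow-up criterion of Theorem~\ref{thm:irred-criterion}.
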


\begin{proof}
By Proposition~\ref{prop:comps-AEP-implies-AEP}, it suffices to show that every $S$-$\AR$-irreducible component of $S$ has the arc-analytic extension property.
Let $X$ be an $S$-$\AR$-irreducible component of $S$, let $Z\subset X$ be $X$-$\AR$-closed, and let $f\in\Aa(Z)$.
By Proposition~\ref{prop:s-AR-irred-implies-arcan-irred}, $X$ is an irreducible semialgebraic set, and so Corollary~\ref{cor:irreducible-dim-drop} implies that $Z$ is either a finite set, or else $Z=X$.
If $Z$ is finite, a polynomial will extend $f$ to $\Aa(X)$. If not, $f$ is in $\Aa(X)$ already.
\end{proof}

Semialgebraic sets with the arc-analytic extension property also satisfy an arc-analytic variant of the Urysohn lemma.
More precisely, we have the following.

\begin{proposition}[\textbf{Arc-Analytic Urysohn Lemma}]
\label{prop:arcan-Urysohn}
Let $S\subset\R^n$ be a semialgebraic set with the arc-analytic extension property and let $X$ and $Y$ be disjoint $S$-$\AR$-closed sets.
There exists $f\in\Aa(S)$ such that $f|_X=0$ and $f|_Y=1$.
In particular, there are disjoint open semialgebraic sets $U$ and $V$ such that $X\subset U$ and $Y\subset V$.
\end{proposition}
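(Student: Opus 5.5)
The plan is to build $f$ first on the $S$-$\AR$-closed set $X\cup Y$ and then extend it to $S$ using the arc-analytic extension property.

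\emph{Gluing on $X\cup Y$.} Since the $S$-$\AR$-closed sets form the closed sets of a topology (Theorem~\ref{thm:general-AR-top}), $X\cup Y$ is $S$-$\AR$-closed. Define $g:X\cup Y\to\R$ by $g|_X=0$ and $g|_Y=1$. This is well defined because $X\cap Y=\varnothing$, and it is clearly semialgebraic.

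\emph{Arc-analyticity of $g$.} Let $\gamma:(-1,1)\to X\cup Y$ be an analytic arc. The preimages $\gamma^{-1}(X)$ and $\gamma^{-1}(Y)$ are closed in $(-1,1)$ by Lemma~\ref{lem:AR-closed-is-closed}, and they cover the interval. By the Baire argument used in the proof of Theorem~\ref{thm:general-AR-top}, one of them, say $\gamma^{-1}(X)$, has nonempty interior. Arc-symmetry of $X$ in $S$ (Definition~\ref{def:arc-symm-in-semialg}(i)) then gives $\gamma((-1,1))\subset X$. Hence $g\circ\gamma$ is constant, and so analytic. Thus $g\in\Aa(X\cup Y)$. (Alternatively, $(-1,1)$ is connected and is a disjoint union of two closed sets, so one of them is empty.)

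\emph{Extension.} By the AEP (Definition~\ref{def:zero-set-arcan-ext}(ii)) there is $f\in\Aa(S)$ with $f|_{X\cup Y}=g$, so $f|_X=0$ and $f|_Y=1$, which proves the main claim.

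\emph{Separating open sets.} Put $U=\{x\in S:f(x)<1/2\}$ and $V=\{x\in S:f(x)>1/2\}$. These are semialgebraic by Tarski--Seidenberg, disjoint, and open in $S$ because $f$ is Euclidean-continuous by Proposition~\ref{prop:arcan-AR-continuous}(iii). Clearly $X\subset U$ and $Y\subset V$. One caveat: "open" has to be read as open in $S$, since $S$ need not be open in $\R^n$.

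There is no serious obstacle here. The only point needing care is the arc-analyticity of $g$ on the union, and that is handled by the connectedness argument above.
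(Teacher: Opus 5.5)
Your proof is correct and follows the paper's argument exactly: define the $0/1$ function on the $S$-$\AR$-closed set $X\cup Y$, extend it to $S$ by the AEP, and take $U=f^{-1}((-\infty,1/2))$ and $V=f^{-1}((1/2,\infty))$, which are open in $S$ because $f$ is continuous. Your connectedness argument for the arc-analyticity of the glued function supplies a detail the paper only asserts. Your reading of ``open'' as open in $S$ matches what the paper's proof actually gives.
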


\begin{proof}
The function $f:X\cup Y\to\R$ given by
\[
f(x)=\begin{cases} 0 &\textrm{if\ }x\in X\\ 1 &\text{if\ } x\in Y \end{cases}
\]
is arc-analytic semialgebraic, and $X\cup Y$ is $S$-$\AR$-closed.
By arc-analytic extension, we find a function $F \in\Aa(S)$ such that $F|_{X\cup Y}=f$.
By continuity of $F$, the sets $U=F^{-1}((-\infty,1/2))$ and $V=F^{-1}((1/2,\infty))$ are open semialgebraic.
\end{proof}

We conclude by showing that the arc-analytic extension property always implies the zero-set property. In the classical, arc-symmetric setting, this result can be used as an alternative immediate proof of Theorem~\ref{thm:ASey-zero-set-thm} as a corollary to Theorem~\ref{thm:ASey-arc-an-extension}.

\begin{proposition}
\label{prop:AEP-implies-ZSP}
Let $S\subset\R^n$ be a semialgebraic set satisfying the arc-analytic extension property. Then, $S$ has the zero-set property.
That is, for any $S$-$\AR$-closed set $X\subset S$, there is a function $f\in\Aa(S)$ such that $\ZZ_S(f)=X$.
\end{proposition}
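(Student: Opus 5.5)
The plan is to reduce the zero-set property to arc-analytic extension from a suitable $S$-$\AR$-closed set on which an explicit function is available. Fix an $S$-$\AR$-closed $X\subset S$. We proceed by Noetherian induction on $X$ with respect to the $S$-$\AR$ topology, or equivalently by induction on $\dim X$ together with the number of top-dimensional $S$-$\AR$-irreducible components. The base case is $X=\varnothing$, for which the constant function $1$ works.

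For the inductive step, we first produce a function $g\in\Aa(S)$ whose zero set contains $X$ but has no extra top-dimensional part. Choose finitely many points $p_1,\dots,p_m\in S\setminus X$, one in each $S$-$\AR$-irreducible component of $S$ not contained in $X$. Since $X\cup\{p_1,\dots,p_m\}$ is $S$-$\AR$-closed (a finite union of closed sets), the function equal to $0$ on $X$ and $1$ at each $p_i$ is arc-analytic on this set, because analytic arcs into a finite set are constant. By the AEP it extends to some $g\in\Aa(S)$ with $X\subset\ZZ_S(g)$.

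The zero set $\ZZ_S(g)$ may, however, be much bigger than $X$, so a cleaner route is needed. Set $Y=\ZZ_S(g)$, which is $S$-$\AR$-closed, and aim to shrink it. The key step is the following. Let $W=\Cl_S(Y\setminus X)\cap X$; better, work with $Z=\Cl^\AR_S(Y\setminus X)$, which is $S$-$\AR$-closed and satisfies $Y=X\cup Z$. On the $S$-$\AR$-closed set $X\cup Z$, we want a function vanishing exactly on $X$. To get it, note that $X\cap Z$ is a proper $S$-$\AR$-closed subset of $Z$ (of smaller dimension whenever $Z$ is irreducible and not inside $X$), so by the inductive hypothesis applied to $Z$, which inherits the AEP because AEP passes to $S$-$\AR$-closed subsets trivially, there is $u\in\Aa(Z)$ with $\ZZ_Z(u)=X\cap Z$. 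Define $\phi$ on $X\cup Z$ by $\phi|_X=0$ and $\phi|_Z=u^2$. It is well defined since $u=0$ on $X\cap Z$. It is arc-analytic since any analytic arc in $X\cup Z$ lies entirely in $X$ or entirely in $Z$, by arc-symmetry and the Baire argument from the proof of Theorem~\ref{thm:general-AR-top}. By the AEP, extend $\phi$ to $\Phi\in\Aa(S)$ and put $f=g^2+\Phi^2$. Then $\ZZ_S(f)=\ZZ_S(g)\cap\ZZ_S(\Phi)=(X\cup Z)\cap\ZZ_S(\Phi)=X\cup(Z\cap\ZZ_Z(u))=X$.

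The main obstacle is organizing the induction so that it is legitimate: the inductive hypothesis must be applied to the pair $(Z, X\cap Z)$ with $Z\subset S$ possibly as large as $S$ itself. To avoid circularity, we will induct on the $S$-$\AR$-closed set $X$ via the descending chain condition, formulating the claim for pairs $(T,X)$ with $T$ an $S$-$\AR$-closed set carrying the AEP. In fact, the cleanest way is to avoid $g$ altogether. Induct on $\dim$ of the ambient closed set $T$ (starting from $T=S$) and on its number of top-dimensional components. Given $X\subsetneq T$, write $T=X\cup Z$ with $Z=\Cl^\AR_T(T\setminus X)$. If $Z\neq T$, apply the induction to the smaller ambient set $Z$; if $Z=T$, then $X\cap Z=X$ and we must instead decrease dimension by using components: apply the construction component by component to the top-dimensional components $T_j\not\subset X$, where $X\cap T_j$ has strictly smaller dimension by Corollary~\ref{cor:irred-implies-dim-drop}-type dimension drop (Lemma~\ref{lem:dim-drop} on $\Reg$ strata). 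Getting this bookkeeping right is the delicate part; the gluing and extension steps themselves are routine given the AEP.
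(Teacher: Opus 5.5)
\textbf{The argument has a genuine gap: the induction is never made well-founded.} The obstacle you flag is not real. You worry that $Z$ may be ``as large as $S$ itself'', but $Z\subset Y=\ZZ_S(g)$ and $g(p_1)=1$, so $Z\subsetneq S$ whenever $X\neq S$. A single point $p\in S\setminus X$ suffices; none of the component bookkeeping is needed.

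Every $S$-$\AR$-closed $T\subset S$ inherits the AEP: a $T$-$\AR$-closed set is $S$-$\AR$-closed, so you extend to $S$ and restrict to $T$. Hence your middle paragraph becomes a complete proof once you use Noetherian induction on the \emph{ambient} set:
\begin{enumerate}
\item Take a minimal $S$-$\AR$-closed $T$ failing the ZSP, and a $T$-$\AR$-closed $X\subsetneq T$.
\item Build $g\in\Aa(T)$ from $X\cup\{p\}$ as you did, and set $Y=\ZZ_T(g)$. Then $X\subset Y\subsetneq T$.
\item By minimality, $Y$ has the ZSP, so there is $u\in\Aa(Y)$ with $\ZZ_Y(u)=X$. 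You do not even need $Z$.
\item Extend $u$ to $U\in\Aa(T)$ by the AEP of $T$, and take $g^2+U^2$.
\end{enumerate}
The two inductions you actually state do not work. Induction on $X$ fails because $X\cap Z=X$ whenever $X\subset\Cl^\AR_S(Y\setminus X)$, which happens routinely (a point on a curve). Induction on $\dim T$ together with the number of top-dimensional components need not decrease when you pass to a proper closed subset: take a plane plus a skew line, with $X$ the line.

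\textbf{Your final ``cleanest'' scheme fails outright.} By discarding $g$, you discard the only step that produces a function vanishing on $X$ but not on the whole ambient set. If $T$ is irreducible and $X\subsetneq T$, then $Z=T$, and the component-by-component reduction hands you back the identical problem $(T,X)$. Knowing that $\dim(X\cap T_j)<\dim T_j$ produces no function. Moreover, Corollary~\ref{cor:irred-implies-dim-drop} assumes the ZSP, so invoking it here is circular. Lemma~\ref{lem:dim-drop} on regular strata alone does not rule out $X$ containing some but not all components of $\Reg_d T_j$. Lemma~\ref{lem:irred-implies-dim-drop} is the non-circular substitute, but it does not supply the missing function either.

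\textbf{In fact no induction is needed.} For each $p\in S\setminus X$, your Urysohn-type function (Proposition~\ref{prop:arcan-Urysohn}) lies in $\II_S(X)$ and does not vanish at $p$. So $\ZZ_S(\II_S(X))=X$, and Lemma~\ref{lem:zero-set-by-one-function} gives a single $f$.

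\textbf{Comparison with the paper.} The paper takes a longer route. It reduces to irreducible $X$. It shows $\ZZ_S(\II_S(X))$ is irreducible, by primality together with the AEP via Proposition~\ref{prop:prime-when-irred}. It shows $\ZZ_S(\II_S(X))$ has dimension $\dim X$, since it lies in $\Cl^\Zar(X)$. It then concludes with Lemma~\ref{lem:irred-implies-dim-drop}, whose proof is exactly this Urysohn step, followed by Lemma~\ref{lem:zero-set-by-one-function}.
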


\begin{proof}
Without loss of generality, we may assume that $X$ is $S$-$\AR$-irreducible, and thus an irreducible semialgebraic set (Proposition~\ref{prop:s-AR-irred-implies-arcan-irred}).
We claim that $\ZZ_S(\II_S(X))$ is an irreducible semialgebraic set. Indeed, by irreducibility of $X$ and Proposition~\ref{prop:prime-when-irred}, we have that $\II_S(\ZZ_S(\II_S(X)))=\II_S(X)$ is prime.
Hence, by the AEP assumption and Proposition~\ref{prop:prime-when-irred} again, $\ZZ_S(\II_S(X))$ is irreducible.
Since $\ZZ_S(\II_S(X))\subset\Cl^\Zar(X)$, it follows from Proposition~\ref{prop:alg-closure-dim} that $\dim X=\dim\ZZ_S(\II_S(X))$. Hence, $X=\ZZ_S(\II_S(X))$, by Lemma~\ref{lem:irred-implies-dim-drop} below.
Finally, by Lemma~\ref{lem:zero-set-by-one-function}, there is a function $f\in\II_S(X)$ such that $\ZZ_S(f)=\ZZ_S(\II_S(X))$, and so $X=\ZZ_S(f)$.
\end{proof}

\begin{lemma}[cf. Corollary~\ref{cor:irred-implies-dim-drop}]
\label{lem:irred-implies-dim-drop}
Let $S\subset\R^n$ be a semialgebraic set satisfying the arc-analytic extension property. Suppose $X,Y\subset S$ are $S$-$\AR$-closed sets, $Y\subset X$, and $X$ is irreducible as a semialgebraic set. Then, $\dim{Y}=\dim{X}$ implies $Y=X$.
\end{lemma}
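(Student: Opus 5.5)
The plan is to follow the proof of Corollary~\ref{cor:irred-implies-dim-drop}. The only missing ingredient there is a function in $\Aa(S)$ whose zero-set is $Y$, and we cannot take the zero-set property for granted, since this lemma is itself used to prove it. Instead, we will use the AEP to build a function on $S$ that vanishes on $Y$ and is nonzero at a prescribed point of $X\setminus Y$. Applying the Arc-Analytic Identity Principle (Proposition~\ref{prop:general-ident-princ}) on the irreducible set $X$ will then force that function to vanish at the point, which is a contradiction.

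Concretely, suppose $\dim Y=\dim X$ but $Y\neq X$, and pick $a\in X\setminus Y$. The singleton $\{a\}$ is $S$-$\AR$-closed, because a point is trivially arc-symmetric: an analytic arc constant on an interval is constant. By Theorem~\ref{thm:general-AR-top}, the set $Y\cup\{a\}$ is a finite union of $S$-$\AR$-closed sets, hence $S$-$\AR$-closed. Define $g:Y\cup\{a\}\to\R$ by $g|_Y=0$ and $g(a)=1$.

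We check that $g\in\Aa(Y\cup\{a\})$. Let $\gamma:(-1,1)\to Y\cup\{a\}$ be analytic. Since $Y$ is closed in $S$ (Lemma~\ref{lem:AR-closed-is-closed}) and $a\notin Y$, the set $\gamma^{-1}(a)$ is open and closed in $(-1,1)$. Hence $\gamma$ is either constantly $a$ or lands in $Y$, and in both cases $g\circ\gamma$ is constant. Semialgebraicity of $g$ is clear. By the AEP there is $G\in\Aa(S)$ with $G|_{Y\cup\{a\}}=g$.

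Now restrict to $X$. The function $G|_X$ lies in $\Aa(X)$ and vanishes on $Y\subset X$, where $\dim Y=\dim X$. Since $X$ is irreducible, Proposition~\ref{prop:general-ident-princ} gives $G|_X\equiv 0$, so $G(a)=0$. This contradicts $G(a)=1$, and therefore $Y=X$.

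The only genuinely delicate point is the arc-analyticity of the piecewise function $g$ on $Y\cup\{a\}$, and the topological clopen argument above settles it. Everything else reduces directly to the AEP and the Identity Principle, which is stated for arbitrary semialgebraic sets in a Nash manifold, such as $\R^n$ here.
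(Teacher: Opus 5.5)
Your proof is correct and is essentially the paper's argument. The paper applies the Arc-Analytic Urysohn Lemma (Proposition~\ref{prop:arcan-Urysohn}) to the disjoint $S$-$\AR$-closed sets $\{p\}$ and $Y$, and that lemma is proved by exactly your AEP extension of the function equal to $0$ on $Y$ and $1$ at the point; your clopen argument for arc-analyticity on $Y\cup\{a\}$ makes explicit a step the paper states without proof, and you then conclude with the Identity Principle as the paper does.
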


\begin{proof}
Suppose that $Y\subsetneq X$, and let $p\in X\setminus Y$. The sets $\{p\}$ and $Y$ are $S$-$\AR$-closed, hence, by Proposition~\ref{prop:arcan-Urysohn}, there exists $g\in\Aa(S)$ such that $g|_Y=0$ and $g(p)=1$. Then $g|_X\in\Aa(X)$ is not identically zero but $g|_Y=0$. By Proposition~\ref{prop:general-ident-princ}, it follows that $\dim{Y}<\dim{X}$.
\end{proof}

\vspace{2ex}
%%%%%%%%%%%%%%%%%%%%%%%%%%%%%%%%%%%%%%%%%%%%%%%%%%
%%%%%%%%%%%%%%%%% Section %%%%%%%%%%%%%%%%%%%%%%%%
%%%%%%%%%%%%%%%%%%%%%%%%%%%%%%%%%%%%%%%%%%%%%%%%%%

%\section{Zero-set property of locally closed semialgebraic sets}
%\label{sec:zero-set}

%[Some partial results? Particularly: Does ZSP hold on images of arc-symmetric sets by blow-ups, or on Nash sheets?]

\vspace{2ex}
%%%%%%%%%%%%%%%%%%%%%%%%%%%%%%%%%%%%%%%%%%%%%%%%%%
%References
%%%%%%%%%%%%%%%%%%%%%%%%%%%%%%%%%%%%%%%%%%%%%%%%%%
\bibliographystyle{amsplain}

\end{document}